\documentclass[12pt,a4]{amsart}
\usepackage{inputenc,amsxtra}
\usepackage{a4wide}
\usepackage[T1]{fontenc}
\usepackage{fixltx2e, graphicx, mathrsfs,longtable, float, wrapfig, soul,
  textcomp, marvosym, wasysym, latexsym, bbm,
  fancybox,fancyvrb, cite,amsmath,amssymb,amsthm,amsrefs,amsfonts,subfig,pdfpages,enumerate}
\usepackage{enumitem}
\usepackage{listings}
\usepackage{tabularx}
\usepackage{pdfpages}
\usepackage{mathtools,hyperref}
 \usepackage{relsize}
\usepackage{blkarray, bigstrut}
\usepackage[all]{xy}
\tolerance=1000

\newtheorem{theorem}{Theorem}[section]
\newtheorem{lemma}[theorem]{Lemma}
\newtheorem{proposition}[theorem]{Proposition}
\newtheorem{corollary}[theorem]{Corollary}

\theoremstyle{definition}
\newtheorem{example}[theorem]{Example}
\newtheorem{definition}[theorem]{Definition}
\newtheorem{remark}[theorem]{Remark}

\newcommand\xqed[1]{%
  \leavevmode\unskip\penalty9999 \hbox{}\nobreak\hfill
  \quad\hbox{#1}}
\newcommand\demo{\xqed{$\triangle$}}

\newcommand{\ignore}[1]{}

\newcommand{\minority}{\operatorname{minority}} 
\newcommand{\majority}{\operatorname{majority}} 

\DeclareMathOperator{\Csp}{CSP} 
\DeclareMathOperator{\Age}{Age} 
\DeclareMathOperator{\Th}{Th} 
\DeclareMathOperator{\Pol}{Pol} 
\DeclareMathOperator{\End}{End} 
\DeclareMathOperator{\Aut}{Aut} 
\newcommand{\bA}{\mathfrak A}
\newcommand{\bU}{\mathfrak U}
\newcommand{\bB}{\mathfrak B}
\newcommand{\bC}{\mathfrak C}
\newcommand{\bD}{\mathfrak D}
\newcommand{\bG}{\mathfrak G}
\newcommand{\bH}{\mathfrak H}
\newcommand{\bK}{\mathfrak K}
\newcommand{\bL}{\mathfrak L}
\newcommand{\bM}{\mathfrak M}
\newcommand{\bP}{\mathfrak P}
\newcommand{\bS}{\mathfrak S}
\newcommand{\bT}{\mathfrak T}

\newcommand{\Proj}{\mathscr P}

\renewcommand{\Im}{\operatorname{Im}}

\numberwithin{equation}{section}

\title{Solving Equation Systems in $\omega$-categorical Algebras} 
\author{Manuel Bodirsky and Thomas Quinn-Gregson}


\keywords{}

\begin{document}

\thanks{
Both authors have received funding from  the Deutsche  Forschungsgemeinschaft (DFG)
and from the European Research Council (Grant Agreement no. 681988, CSP-Infinity).
}

\begin{abstract}
We study the computational complexity
of deciding whether a given set of term equalities and inequalities has a solution in an $\omega$-categorical algebra $\mathfrak{A}$. 
There are $\omega$-categorical groups 
where this problem is undecidable. 
We show that if $\mathfrak{A}$ is an $\omega$-categorical  
semilattice or an abelian group, then 
the problem is in P or NP-hard. 
The hard cases are precisely those where
$\Pol(\mathfrak{A},\neq)$ has a uniformly continuous minor-preserving map to the clone of projections on a two-element set. 
The results provide information about
algebras $\mathfrak{A}$ such that $\Pol(\mathfrak{A},\neq)$ 
does not satisfy this condition, 
and they are of independent interest in universal algebra. In our proofs we rely on the Barto-Pinsker theorem about the existence of pseudo-Siggers polymorphisms. To the best of our knowledge, this is the first time that the pseudo-Siggers identity has been used to prove a complexity dichotomy. 
\end{abstract}

\maketitle

\section{Introduction}
The problem of deciding whether a given 
system of linear equations has a solution in ${\mathbb Z}_p$ is one of the central computational problems that can be solved in polynomial time, for example by Gaussian elimination. The problem can also be rephrased as follows: fix the structure $({\mathbb Z}_p;+,0,1,\dots,p-1)$ where $+$ is the binary addition operation and $0,1,\dots,p-1$ are constants;
the problem is then to decide whether a given conjunction of atomic formulas in the signature of this structure is satisfiable in this structure. 
Analogous computational problems can be formulated for other algebraic structures $\bA$ instead of $({\mathbb Z}_p;+,0,\dots,p-1)$, 
and have been studied systematically in the special cases of groups~\cite{GoldmannRussell}, monoids~\cite{MooreTessonTherien}, 
and semigroups~\cite{KlimaTessonTherien}. 

An even more general class of computational problems is the class of \emph{constraint satisfaction problems (CSPs)}; here we fix a structure $\bA$ with a finite signature $\tau$, 
and the task is to decide whether a given 
conjunction of atomic $\tau$-formulas is satisfiable in $\bA$. 
This problem, denoted by $\Csp(\bA)$, is 
typically introduced only for relational signatures; the restriction to relational signatures is not severe, because we may replace each operation $f$ of arity $k$ in $\bA$ by the $k+1$-ary relation $R_f := \{(x_1,\dots,x_k,x_0) \mid x_0 = f(x_1,\dots,x_k)\}$. Then every atomic formula over $\bA$ can be translated into a finite set of atomic formulas in the new signature to obtain a satisfiability-equivalent instance in the new signature. We might have to introduce some additional variables to eliminate nested terms in atomic formulas, but the overall reduction changes the size of the input only by a linear factor.  

It has been conjectured by
Feder and Vardi~\cite{FederVardi} that 
 CSPs 
for fixed structures $\bA$ with a finite domain have a \emph{complexity dichotomy} in the sense that they are either in P or NP-complete. The dichotomy conjecture has been confirmed recently, independently by Bulatov~\cite{BulatovFVConjecture} and by Zhuk~\cite{ZhukFVConjecture}. 
This achievement has been made possible because of an important link between constraint satisfaction and central topics in universal algebra; see, e.g., the survey articles in~\cite{Dagstuhl2017}. 

There are many famous computational problems
that can be phrased as solving equation systems over algebraic structures $\bA$ with an infinite domain; for example, $\Csp(\bA)$ 
for the structure $\bA = ({\mathbb Z};+,\cdot,1)$ 
 is Hilbert's tenth problem, and known to be undecidable~\cite{Matiyasevich}, whereas
 the problem can be solved in polynomial time
 for $\bA = ({\mathbb Z};+,1)$ (see, e.g.,~\cite{Schrijver}). 
In full generality, the mentioned connection between constraint satisfaction and universal algebra breaks down (see the survey article~\cite{Numeric-Domains}). 
However, if the structure $\bA$ is $\omega$-categorical, i.e., if all countably infinite models of the first-order theory of $\bA$ are isomorphic,
then the universal-algebraic approach is still applicable~\cite{BodirskyNesetrilJLC,BartoPinskerDichotomy}. Note that when studying the CSP of infinite-domain structures $\bA$ we still require the signature of $\bA$ to be finite. In particular, we no longer have constants for every element in the domain. If the signature contains no constants at all, then solving equation systems becomes trivial for many algebraic structures: 
for instance for monoids with unit element $1$, we might satisfy all the equations by setting all variables to $1$. 
The natural signature for studying the problem of
solving equations over infinite domains is to additionally allow inequalities in the input, i.e., atomic formulas of the form $s \neq t$ where $s$ and $t$ are terms. In this article we study
problems of the form $\Csp(\bA,\neq)$ where $\bA$ is a finite-signature algebra\footnote{An \emph{algebra} is simply a structure with a purely functional signature; see~\cite{HodgesLong} for basic terminology.}. 
For example, for a given monoid
$\bA$, the problem $\Csp(\bA,\neq)$ is non-trivial in general since we may no longer map all the variables in the input to $1$.

\subsection{Applications}
If $\Csp(\bA,\neq)$ can be solved in polynomial time, then various other interesting computational problems can be solved in polynomial time, too. 
Let $\bA$ be an algebra with a finite signature $\tau$ and a (finite or infinite) domain $A$.  
The \emph{Identity Checking Problem (for $\bA$)}
is the problem of deciding whether for given $\tau$-terms $s,t$ over the variables $x_1,\dots,x_n$ the identity 
$s \approx t$ is 
\emph{valid} in $\bA$, i.e., whether 
\begin{align} 
\bA \models \forall x_1,\dots,x_n \colon  s(x_1,\dots,x_n) = t(x_1,\dots,x_n).
\label{eq:validity}
\end{align}
Note that this is the case if and only if 
there are no elements $a_1,\dots,a_n \in A$ such that
$\bA \models s(a_1,\dots,a_n) \neq t(a_1,\dots,a_n)$; by introducing additional variables and equations we can translate this
into an instance of $\Csp(\bA,\neq)$ which is unsatisfiable if and only if (\ref{eq:validity}) holds. 
Hence, if $\Csp(\bA,\neq)$ is in NP, then
the Identity Checking Problem for $\bA$ is in coNP,
and if $\Csp(\bA,\neq)$ is in P, then
the Identity Checking Problem for $\bA$ is in P, too. 

In the so-called \emph{Entailment Problem (for $\bA$)} we are  
given a finite set of equations $s_1 = t_1,\dots, s_m = t_m$ 
and another equation $s_0 = t_0$ over a common set of variables $V$, and the question 
is whether every assignment $V \to A$ that satisfies  
 $s_1=t_1 \wedge \cdots \wedge s_m=t_m$ also satisfies $s_0 = t_0$. 
Note that this is the case if and only if
the formula $s_1=t_1 \wedge \cdots \wedge s_m=t_m \wedge s_0 \neq t_0$ is unsatisfiable,
so again the problem reduces in polynomial time to the complement of 
$\Csp(\bA,\neq)$. 

Finally, there is a strong link between 
$\Csp(\bA,\neq)$ and the problem $\Csp(\bA,a_1,\dots,a_n)$, where $a_1,\dots,a_n \in A$ are constants, if the algebra $\bA$ is \emph{model-complete}. The notion of model-completeness  is a central concept from model theory and can be seen as a weak form of quantifier elimination: $\bA$ is model-complete if 
every first-order sentence is equivalent to an existential sentence over $\bA$. 
It follows from results in~\cite{Cores-journal,BodHilsMartin-Journal} that 
if $\bA$ is model-complete, then 
for all $a_1,\dots,a_n \in A$ the problem 
$\Csp(\bA,\neq)$ and the problem $\Csp(\bA,\neq,a_1,\dots,a_n)$ are polynomial-time equivalent; in particular, there is a polynomial-time reduction from $\Csp(\bA,a_1,\dots,a_n)$ to 
$\Csp(\bA,\neq)$. Conversely, we will see that 
if $\bA$ satisfies an additional assumption, namely\footnote{This property is equivalent to a property that is often referred to as \emph{convexity} in the theoretical computer science literature~\cite{NelsonOppen80}  and will play an important role in this article.} that there is an embedding $\bA^2 \hookrightarrow \bA$ (i.e., an isomorphism between $\bA^2 = \bA \times \bA$ and a substructure of $\bA$), then there are $a_1,\dots,a_n \in A$ such that there is a polynomial-time 
reduction from $\Csp(\bA,\neq)$ to $\Csp(\bA,a_1,\dots,a_n)$ (Proposition~\ref{prop:constants}). 

\subsection{Results}
We initiate the study the computational complexity of $\Csp(\bA,\neq)$ for $\omega$-categorical algebras $\bA$. 
We first observe that there are $\omega$-categorical groups $\bA$ such that $\Csp(\bA,\neq)$ is undecidable (Section~\ref{sect:undec}). For abelian $\omega$-categorical groups, however, we show that $\Csp(\bA,\neq)$ is in P or NP-complete (Theorem~\ref{thm:abelian-groups}). Recall that if P and NP are distinct then there are also problems in NP that are of intermediate complexity, i.e., neither in P nor NP-hard~(\cite{Ladner}). We also show a P versus NP-hard complexity dichotomy for $\omega$-categorical semilattices (Theorem~\ref{thm:semilattice}).

\subsection{Outline}
In our proofs we rely on recent 
universal-algebraic results for $\omega$-categorical structures, in particular from~\cite{wonderland,BKOPP-equations,BartoPinskerDichotomy}, so we start
by giving a self-contained introduction to the universal-algebraic approach in Section~\ref{sect:ua}. 
Universal-algebraic concepts are also needed 
to precisely state the border between the NP-hard and the polynomial cases in our results.
In Section~\ref{sect:algebras} we specialise the
universal-algebraic approach to structures of the
form $(\bA,\neq)$ where $\bA$ is an algebra,
and in Section~\ref{sect:monoids} we specialise further to monoids.  
Section~\ref{sect:groups} contains our
classification for $\omega$-categorical abelian groups. 
Finally, Section~\ref{sect:semilattices} contains our classification for $\omega$-categorical 
semilattices.
We close with a discussion and some open problems in Section~\ref{sect:discussion}. 


\section{The Universal-Algebraic Approach}
\label{sect:ua}

The universal-algebraic approach is based on the following concept from universal algebra.
An operation $f \colon A^k \to A$ \emph{preserves}
a relation $R \subseteq A^m$ if for all $t_1,\dots,t_k \in R$ the $m$-tuple 
$f(t_1,\dots,t_k)$ obtained from applying $f$ componentwise is also contained in $R$. 
Note that if $g \colon A^m \to A$ is an operation, then $f$ preserves the \emph{graph $R_g$ of $g$}, defined as 
$$R_g := \{(a_1,\dots,a_{m},g(a_1,\dots,a_m)) \mid a_1,\dots,a_{m} \in A\},$$
if and only if 
\emph{$f$ commutes with $g$}, i.e., for all $a_{1,1},\dots,a_{n,m} \in A$ 
\begin{align*}
& f(g(a_{1,1},\dots,a_{1,m}),\dots,g(a_{n,1},\dots,a_{n,m})) \\
= \; & g(f(a_{1,1},\dots,a_{n,1}),\dots,f(a_{1,m},\dots,a_{n,m})).
\end{align*}
In this case we say that \emph{$f$ preserves $g$}. 
An operation $f$ is a \emph{polymorphism} of a structure $\bA$ if $f$ preserves all relations and all operations of $\bA$. 
Note that the projections $\pi^k_i \colon A^k \to A$ 
defined by $\pi^k_i(a_1,\dots,a_k) := a_i$ 
is a polymorphism of every structure with domain $A$. We also would like to mention that similarly as the set of all automorphisms of $\bA$ forms a group, the set of all polymorphisms of $\bA$, denoted by $\Pol(\bA)$, forms a \emph{clone}, i.e., the set of polymorphisms is closed
under composition and contain the projections. The clone of projections on a two-element set will be denoted by $\Proj$. A map between two clones
is called \emph{minor-preserving} if 
it maps operations to operations of the same arity, and if $$\xi(f(p_1,\dots,p_n)) = \xi(f)(p_1,\dots,p_n)$$ for all $n$-ary operations $f$ and projections $p_1,\dots,p_n$ of the same arity $m$.  
In the introduction we have mentioned that 
$\Csp(\bA)$ is for every finite structure $\bA$ with finite relational signature in P or NP-complete; using polymorphisms, the border between the two cases can be stated as follows, combining results from~\cite{Siggers,JBK,ZhukFVConjecture,BulatovFVConjecture,wonderland}. 

\begin{theorem}
Let $\bA$ be a structure with finite domain and finite relational signature. 
Then either 
\begin{itemize}
\item $\bA$ has a polymorphism $s \colon A^6 \to A$
which is \emph{Siggers}, i.e., satisfies
$$s(x,y,x,z,y,z) \approx s(y,x,z,x,z,y)$$
in this case,  $\Csp(\bA)$ is in P, or
\item $\Pol(\bA)$ has a minor-preserving map to $\Proj$; in this case, $\Csp(\bA)$ is NP-complete. 
\end{itemize}
\end{theorem}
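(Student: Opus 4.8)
The plan is to assemble the cited results; the only genuinely new content is a short chain of standard reductions, and the deep inputs — Siggers' theorem together with the core and pp-construction machinery on the one side, and the Bulatov and Zhuk theorems on the other — are used as black boxes. I would organise the argument in the following steps.

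\emph{Reduction to a finite idempotent core.} Let $\bB$ be the core of $\bA$. Then $\Csp(\bA)=\Csp(\bB)$, the structures $\bA$ and $\bB$ are homomorphically equivalent, and composing a Siggers polymorphism of one with the homomorphisms to and from the other shows that $\bA$ has a Siggers polymorphism iff $\bB$ does; likewise homomorphic equivalence yields minor-preserving maps $\Pol(\bA)\to\Pol(\bB)$ and $\Pol(\bB)\to\Pol(\bA)$, so $\Pol(\bA)$ has a minor-preserving map to $\Proj$ iff $\Pol(\bB)$ does. Expanding $\bB$ by all singleton unary relations $\{b\}$, $b\in B$, gives a structure $\bB'$ with $\Csp(\bB')$ polynomial-time equivalent to $\Csp(\bB)$ and with $\Pol(\bB')$ idempotent; since $\bB$ is a core, $x\mapsto s(x,\dots,x)$ is an automorphism for every Siggers polymorphism $s$, so precomposing its inverse turns $s$ into an idempotent Siggers polymorphism, whence $\bB$ has a Siggers polymorphism iff $\bB'$ has one, and the ``maps to $\Proj$'' conditions for $\bB$ and $\bB'$ again coincide. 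Thus we may assume $\bA$ is a finite idempotent core.

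\emph{The two cases are exclusive and exhaustive.} Exclusivity is immediate: no projection on $\{0,1\}$ satisfies the Siggers identity (for instance $\pi^6_1$ sends its two sides to $x$ and to $y$), so if $\xi\colon\Pol(\bA)\to\Proj$ is minor-preserving and $s$ were a Siggers polymorphism, then $\xi(s)$ would be a Siggers operation in $\Proj$, which is impossible. Exhaustiveness is Siggers' theorem in the sharpened form of~\cite{Siggers,wonderland}: a finite idempotent algebra either has a Taylor term — and then, by Siggers' own reduction, a $6$-ary term satisfying the displayed identity — or it does not, and then a quotient of a subalgebra of $(A;\Pol(\bA))$ is a two-element algebra whose only operations are projections, which exactly provides a minor-preserving map from $\Pol(\bA)$ to $\Proj$.

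\emph{Hardness and tractability.} If $\Pol(\bA)$ has a minor-preserving map to $\Proj$, then by the theory of pp-constructions~\cite{JBK,wonderland} the structure $\bA$ pp-constructs a fixed finite structure whose CSP is NP-complete, such as positive $1$-in-$3$-SAT; since pp-constructions induce polynomial-time reductions, $\Csp(\bA)$ is NP-hard, and it lies in $\mathrm{NP}$ because the domain is finite and the signature is finite and relational, so $\Csp(\bA)$ is NP-complete. In the remaining case the idempotent core $\bA$ has a Taylor term, and then $\Csp(\bA)$ is in P; this last implication is precisely the resolution of the Feder--Vardi conjecture~\cite{BulatovFVConjecture,ZhukFVConjecture}, and it is the one piece of the argument that is genuinely hard — the required polynomial-time algorithm is the content of the Bulatov--Zhuk theorem and is not reproved here. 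The only other point needing care is bookkeeping: checking that the various equivalent formulations of the Taylor condition in~\cite{Siggers,JBK,wonderland,BulatovFVConjecture,ZhukFVConjecture} match the precise statement above.
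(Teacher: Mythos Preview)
Your proposal is correct and matches what the paper does, which is simply to state the theorem as a known result and attribute its components to the cited literature: NP-hardness from the absence of a Siggers polymorphism to~\cite{Siggers}, the equivalence of ``has a Siggers polymorphism'' with ``no minor-preserving map to $\Proj$'' to~\cite{wonderland}, and the tractability direction to~\cite{BulatovFVConjecture,ZhukFVConjecture}. The paper gives no proof of its own beyond that paragraph of attributions, so your assembly of the standard reductions (pass to the core, add constants to obtain an idempotent clone, invoke the Siggers/Taylor dichotomy and Bulatov--Zhuk) is exactly the intended argument, just spelled out in more detail than the paper bothers with.
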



The fact that $\Csp(\bA)$ is NP-hard if $\bA$ does not have a Siggers polymorphism~\cite{Siggers}  
was already known before the break-through result from~\cite{ZhukFVConjecture,BulatovFVConjecture}. The equivalence of the existence of a Siggers polymorphism and of the non-existence of a minor-preserving map to $\Proj$ is from~\cite{wonderland}. 
For general $\omega$-categorical structures, the equivalence is no longer valid~\cite{BKOPP-equations}, but we still have the  following hardness condition. 

\begin{theorem}[\cite{wonderland}]\label{thm:uch1}
Let $\bA$ be an $\omega$-categorical structure with a finite relational$^{\ref{note1}}$ signature. If $\Pol(\bA)$ has a 
uniformly continuous\footnote{In our setting, $\xi \colon {\mathscr C} \to \Proj$ is uniformly continuous 
if and only if there exists a finite set $F \subseteq C$ such that if $f,g \in \mathscr C$
agree on $F$, then $\xi(f) = \xi(g)$.} 
minor-preserving map to $\Proj$ 
then $\Csp(\bA)$ is NP-hard. 
\end{theorem}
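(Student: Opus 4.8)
The plan is to reduce from a known NP-hard CSP over a finite domain via the assumed uniformly continuous minor-preserving map $\xi \colon \Pol(\bA) \to \Proj$. The standard strategy here, going back to the pp-constructibility machinery in~\cite{wonderland,BKOPP-equations}, is: from a uniformly continuous minor-preserving map to $\Proj$ one extracts a finite "witnessing" reduct of $\bA$ whose polymorphism clone still has a minor-preserving map to $\Proj$, and then one shows this finite reduct pp-constructs (equivalently: pp-interprets up to homomorphic equivalence) a fixed finite template whose CSP is NP-complete, e.g.\ $(\{0,1\}; \mathrm{1IN3})$ or $K_3$. Polynomial-time many-one reducibility is preserved along pp-constructions, so $\Csp(\bA)$ inherits NP-hardness.

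Here is the order I would carry this out. First, invoke uniform continuity: fix a finite $F \subseteq \Pol(\bA)$ such that $\xi(f)=\xi(g)$ whenever $f,g$ agree on $F$. Since $\bA$ is $\omega$-categorical and the signature is relational and finite, the orbits of $n$-tuples under $\Aut(\bA)$ are definable; choose a finite set $S \subseteq A$ large enough to contain the coordinate-supports of all tuples appearing as arguments of members of $F$, and replace $\bA$ by the structure $\bA'$ on $A$ whose relations are the finitely many orbits of tuples over $S$ (a first-order, in fact pp-, reduct of $\bA$ by $\omega$-categoricity). The key point is that $\Pol(\bA') \supseteq \Pol(\bA)$, the restriction-to-$F$ argument still factors through $\xi$, and hence $\Pol(\bA')$ also admits a uniformly continuous minor-preserving map to $\Proj$. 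By the finite-domain characterisation (Theorem above, the equivalence from~\cite{wonderland}) applied to the finite "trace" algebra on $S$ — or more directly by the $\omega$-categorical pp-construction theorem of~\cite{BKOPP-equations} — the structure $\bA'$ pp-constructs a finite template $\bB$ with $\Csp(\bB)$ NP-complete. Second, spell out the reduction: a pp-construction of $\bB$ in $\bA'$ yields, given an instance $\phi$ of $\Csp(\bB)$, a pp-formula over $\bA'$ (hence over $\bA$, since $\bA'$ is a pp-reduct) that is satisfiable in $\bA$ iff $\phi$ is satisfiable in $\bB$; this transformation is computable in polynomial time. Chaining the reductions $\Csp(\bB) \leq_p \Csp(\bA') \leq_p \Csp(\bA)$ gives NP-hardness of $\Csp(\bA)$.

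The main obstacle is the bookkeeping that the uniform-continuity witness $F$ really does localise everything to a finite sub-object on which the finite-domain theory applies: one must check that passing to the finite-signature reduct $\bA'$ does not destroy the minor-preserving map (it does not, because enlarging the polymorphism clone and restricting to the same finite $F$ keeps $\xi$ well-defined and minor-preserving), and that the hardness transfer from the finite trace to $\bA$ itself is faithful — this is exactly where $\omega$-categoricity is used, so that pp-definable relations of $\bA$ are unions of orbits and the trace construction is reversible. Once these points are in place, everything else (polynomial-time computability of the pp-construction, NP-membership being irrelevant to hardness) is routine. I would present the argument by citing the pp-construction transfer theorem from~\cite{BKOPP-equations} as a black box and only verifying the reduction-of-$\xi$-to-a-finite-reduct step in detail.
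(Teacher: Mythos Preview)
The paper does not prove Theorem~\ref{thm:uch1}; it is quoted from~\cite{wonderland} as a black box, so there is no in-paper argument to compare against. Your sketch is broadly the right strategy and is indeed how the result is obtained in~\cite{wonderland}: a uniformly continuous minor-preserving map $\Pol(\bA)\to\Proj$ is equivalent, for $\omega$-categorical $\bA$, to $\bA$ pp-constructing a finite NP-hard template (say $K_3$ or the 1-in-3 relation), and pp-constructions yield polynomial-time reductions.

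Two points of caution. First, you write ``fix a finite $F\subseteq\Pol(\bA)$'' and then speak of ``coordinate-supports of all tuples appearing as arguments of members of $F$''; this is garbled. By the footnote in the statement, uniform continuity gives a finite $F\subseteq A$ (the \emph{domain}) such that $\xi(f)$ depends only on $f|_{F^n}$ for each arity $n$. The ``trace'' object is then the set $\Pol(\bA)|_F$ of restrictions of polymorphisms to $F$, which is a function clone on the finite set $F$ admitting a minor-preserving map to $\Proj$; the finite-domain theory applies to \emph{that}. Second, the step you flag as the main obstacle --- pulling hardness back from the finite trace to $\bA$ --- is not handled by constructing a ``pp-reduct $\bA'$'' as you describe, but by the pp-construction/pp-interpretation correspondence: one shows that the relations on $F$ invariant under $\Pol(\bA)|_F$ are pp-definable in $(\bA,c)_{c\in F}$ (this is where $\omega$-categoricity and the Bodirsky--Ne\v{s}et\v{r}il characterisation of pp-definability are used), and then adding constants from the model-complete core does not change hardness. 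If you want to present a self-contained argument, this is the piece to spell out carefully; otherwise, citing the main theorem of~\cite{wonderland} directly is cleaner than the hybrid route you outline.
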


To apply this hardness condition,  
we need the following terminology from~\cite{Cores-journal}. 
An $\omega$-categorical structure $\bA$ is called a \emph{core} if every endomorphism of $\bA$ (i.e., every homomorphism from $\bA$ to $\bA$)
is an embedding. 
Two structures $\bA$ and $\bB$ are called \emph{homomorphically equivalent} if there is a
homomorphism from $\bA$ to $\bB$ and vice versa. Clearly, two structures that are homomorphically equivalent have the same CSP. 

\begin{theorem}[\cite{Cores-journal,BodHilsMartin-Journal}]\label{thm:mc-core}
Every $\omega$-categorical relational structure $\bB$ is homomorphically equivalent to a model-complete core structure $\bC$, which is unique up to isomorphism, and again $\omega$-categorical,
and which will be called the \emph{model-complete core} of $\bC$. 
\end{theorem}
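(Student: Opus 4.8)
The plan is to split the statement into three parts and prove each separately: (i) every $\omega$-categorical relational structure $\bB$ is homomorphically equivalent to an $\omega$-categorical structure that is a core; (ii) every $\omega$-categorical core is homomorphically equivalent, within the same signature, to an $\omega$-categorical core that is moreover model-complete; (iii) any two homomorphically equivalent $\omega$-categorical model-complete cores are isomorphic. For (i) I would argue as follows. By the Ryll--Nardzewski theorem $\Aut(\bB)$ is oligomorphic, i.e.\ has only finitely many orbits on $B^n$ for each $n$. For $e \in \End(\bB)$ let $O_n(e)$ be the finite set of those $\Aut(\bB)$-orbits on $B^n$ that meet $e(B)^n$; since $e$ is a homomorphism, post-composition can only shrink these sets, $O_n(e' \circ e) \subseteq O_n(e)$ for all $e' \in \End(\bB)$ and all $n$. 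I would then produce an \emph{orbit-minimal} endomorphism $e$, meaning $O_n(e' \circ e) = O_n(e)$ for every $e' \in \End(\bB)$ and every $n$. This existence statement is the main obstacle, because $B^B$ is \emph{not} compact when $B$ is countably infinite, so one cannot simply take a limit of an arbitrary descending net of endomorphisms; instead I would exploit that each $O_n$ takes values in a \emph{finite} poset, so along a chain $e_1,\ e_2\circ e_1,\ e_3\circ e_2\circ e_1,\dots$ chosen to drive each $O_n$ downward, every component $O_n$ eventually stabilises, and a diagonal argument carried out inside the \emph{closed} monoid $\End(\bB)\subseteq B^B$ yields a pointwise limit $e$ realising all the stabilised values simultaneously.

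Set $\bC := e(\bB)$, the substructure of $\bB$ induced on $e(B)$. Then $e\colon\bB\to\bC$ and the inclusion $\bC\hookrightarrow\bB$ witness that $\bB$ and $\bC$ are homomorphically equivalent. Orbit-minimality makes $\bC$ a core: if some $h\in\End(\bC)$ failed to be an embedding (not injective, or not reflecting some relation), then $h\circ e\in\End(\bB)$ would witness $O_n(h\circ e)\subsetneq O_n(e)$ for some $n$, a contradiction. Finally $\bC$ is $\omega$-categorical: a compactness/back-and-forth argument over the finite orbit data shows that two tuples from $\bC$ lying in the same $\Aut(\bB)$-orbit already lie in the same $\Aut(\bC)$-orbit, so $\bC$ has at most as many $\Aut(\bC)$-orbits on each $C^n$ as $\bB$ has $\Aut(\bB)$-orbits on $B^n$ — finitely many — and Ryll--Nardzewski applies (with $\bC$ possibly finite).

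For (ii) I would start from an $\omega$-categorical core $\bC_0$ and pass to the structure $\bC$ obtained by enlarging each relation $R$ of $\bC_0$ to its homomorphic closure — roughly, the set of tuples $\bar a$ such that the pointed structure $(\bC_0,\bar a)$ is homomorphically equivalent to $(\bC_0,\bar b)$ for the tuples $\bar b\in R$ — as in the construction of \cite{BodHilsMartin-Journal}. This keeps the signature and the homomorphic-equivalence class, can only enlarge $\Aut$, and hence (Ryll--Nardzewski again) preserves $\omega$-categoricity; the point is that, using the standard characterisation that an $\omega$-categorical structure is model-complete precisely when each of its self-embeddings is elementary (equivalently, a pointwise limit of automorphisms), $\bC$ comes out both a core and model-complete. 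For (iii), suppose $\bC_1,\bC_2$ are $\omega$-categorical model-complete cores with homomorphisms $f\colon\bC_1\to\bC_2$ and $g\colon\bC_2\to\bC_1$. Then $g\circ f\in\End(\bC_1)$ is an embedding (core), hence elementary (model-complete), hence — by $\omega$-categoricity — a pointwise limit of automorphisms of $\bC_1$, and symmetrically for $f\circ g$; a back-and-forth between $\bC_1$ and $\bC_2$, at each step following $f$ or $g$ and then correcting by an automorphism supplied by these limit facts, builds an isomorphism $\bC_1\cong\bC_2$. Everything outside part (i) is a fairly routine blend of Ryll--Nardzewski, back-and-forth, and the self-embedding criterion for model-completeness; the genuine difficulty is the orbit-minimality construction, which is exactly where the arity-wise finiteness of orbits is used essentially.
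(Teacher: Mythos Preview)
The paper does not prove this theorem; it is quoted from \cite{Cores-journal,BodHilsMartin-Journal} and used as a black box, so there is no proof in the paper to compare your attempt against.

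That said, your sketch has a real gap in part~(ii). It is \emph{not} true that every $\omega$-categorical core is already model-complete: $([0,1]\cap\mathbb{Q};<)$ is an $\omega$-categorical core (every endomorphism of a strict linear order is an embedding), yet the self-embedding $x\mapsto (x+1)/3$ fails to preserve the formula ``there is no $y<x$'', so the structure is not model-complete. Hence something genuine must be done at this step, and your proposed construction --- enlarging each relation to its ``homomorphic closure'' --- does not do it. If $\bC_0$ is already a core and $(\bC_0,\bar a)$ is homomorphically equivalent to $(\bC_0,\bar b)$ with $\bar b\in R$, then the witnessing endomorphism taking $\bar b$ to $\bar a$ is an embedding (core!), so $\bar a\in R$ already; your construction changes nothing, and you have merely asserted model-completeness.

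The fix, which is how the argument in \cite{Cores-journal} actually runs, is to abandon the separation into (i) and (ii) and extract model-completeness directly from orbit-minimality. If $h$ is any self-embedding of $\bC:=e(\bB)$, then $h\circ e\in\End(\bB)$, and orbit-minimality forces $O_n(h\circ e)=O_n(e)$ for all $n$; so $h$ preserves the trace on $C$ of every $\Aut(\bB)$-orbit. Combined with your (correct) claim that $\Aut(\bB)$-orbits and $\Aut(\bC)$-orbits coincide on $C^n$, this gives that $h$ preserves all $\Aut(\bC)$-orbits, hence by Ryll--Nardzewski is elementary. Your parts (i) and (iii) are otherwise along the right lines.
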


A first-order formula $\phi$ is called \textit{primitive positive} if it is of the form 
\[ \exists \bar{x} ( \phi_1(\bar{x}) \wedge \cdots \wedge \phi_n(\bar{x})), 
\] 
where $\phi_1,\dots,\phi_n$ are atomic formulas. Every primitive positive relation of a relational structure $\bB$ is preserved by the polymorphisms of $\bB$. If $\bB$ is $\omega$-categorical then conversely every relation left invariant by polymorphisms of $\bB$ is primitive positive definable \cite{BodirskyNesetrilJLC}. 

The following is implied by results in~\cite{wonderland}.

\begin{proposition}\label{prop:coreh1}
Let $\bB$ be an $\omega$-categorical relational\footnote{\label{note1}It will be explained in Remark~\ref{rem:gen-sig} that the result also holds for general structures that also might contain operations.} structure. 
\begin{itemize}
\item If $\bC$ is homomorphically equivalent to $\bB$ 
then there is a uniformly continuous minor-preserving map from $\Pol(\bB)$ to $\Pol(\bC)$. 
\item If $\bC$ is the model-complete core
of $\bB$ and $c_1,\dots,c_n \in C$, then $\Pol(\bC)$ (and $\Pol(\bB)$) has a uniformly continuous minor-preserving map to $\Pol(\bC,c_1,\dots,c_n)$.  
\item If $\bA$ is a substructure of $\bB$ whose domain is primitive positive definable in $\bB$, then
 there is a uniformly continuous minor-preserving map from $\Pol(\bB)$ to
 $\Pol(\bA)$. 
\end{itemize}
\end{proposition}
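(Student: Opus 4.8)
The plan is to prove each of the three items by writing down the map explicitly; in all three cases the only real content is checking that the map lands in the target clone, since minor-preservation and uniform continuity will be short one-line verifications. For the first item, fix homomorphisms $h\colon\bB\to\bC$ and $g\colon\bC\to\bB$ (these exist by homomorphic equivalence) and, for a $k$-ary $f\in\Pol(\bB)$, define the reflection $\xi(f)\colon C^k\to C$ by $\xi(f)(x_1,\dots,x_k):=h\bigl(f(g(x_1),\dots,g(x_k))\bigr)$. I would check $\xi(f)\in\Pol(\bC)$ as follows: for a relation $R$ of $\bC$ and tuples $t_1,\dots,t_k\in R^{\bC}$, the componentwise images $g(t_i)$ lie in $R^{\bB}$ (homomorphisms preserve relations), hence $f(g(t_1),\dots,g(t_k))\in R^{\bB}$, hence its image under $h$, which is $\xi(f)(t_1,\dots,t_k)$, lies in $R^{\bC}$; operations of $\bC$ are handled the same way via their graphs (operations are reduced to their graphs, cf.\ Remark~\ref{rem:gen-sig}, which is why the relational hypothesis can be dropped). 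Projections pass through $g$ and $h$, so both $\xi(f(p_1,\dots,p_n))$ and $\xi(f)(p_1,\dots,p_n)$ evaluate to $h(f(g(x_{j_1}),\dots,g(x_{j_n})))$ when $p_i=\pi^m_{j_i}$, giving minor-preservation; and to read off $\xi(f)$ on a finite $T\subseteq C^k$ one only needs $f$ on the finite set of componentwise $g$-images of tuples in $T$, giving uniform continuity.

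The third item is the easiest: since $A$ is primitive positive definable in $\bB$, every $f\in\Pol(\bB)$ preserves the subset $A$, so the restriction $\xi(f):=f\!\upharpoonright\! A^k$ is a well-defined map $A^k\to A$. Because the relations (and operations) of the substructure $\bA$ are exactly the traces on $A$ of those of $\bB$, and $f$ preserves both those relations and the set $A$, we get $\xi(f)\in\Pol(\bA)$; restriction commutes with composition with projections, and $\xi(f)$ on a finite $T\subseteq A^k$ depends only on $f$ on $T$, so minor-preservation and uniform continuity are immediate.

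The only genuinely delicate item is the second one, where constants must be named, and this is the step I expect to require the most care. Given a $k$-ary $f\in\Pol(\bC)$, the unary operation $e_f(x):=f(x,\dots,x)$ is an endomorphism of $\bC$; since $\bC$ is a model-complete core, $\End(\bC)\subseteq\overline{\Aut(\bC)}$ (equivalently, endomorphisms preserve the primitive positive definable $\Aut(\bC)$-orbit of $(c_1,\dots,c_n)$), so there is $\alpha\in\Aut(\bC)$ with $\alpha(c_i)=e_f(c_i)=f(c_i,\dots,c_i)$ for all $i$; then $\alpha^{-1}\circ f\in\Pol(\bC)$ and $(\alpha^{-1}\circ f)(c_i,\dots,c_i)=c_i$, so $\alpha^{-1}\circ f\in\Pol(\bC,c_1,\dots,c_n)$. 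The subtlety is that $\alpha$ is not canonical, and a careless choice destroys both minor-preservation and uniform continuity; the fix is to fix once and for all a choice of automorphism $\beta(\bar d)\in\Aut(\bC)$ realising each tuple $\bar d$ in the orbit of $(c_1,\dots,c_n)$, and to set $\xi(f):=\beta\bigl(f(c_1,\dots,c_1),\dots,f(c_n,\dots,c_n)\bigr)^{-1}\circ f$. Since $\xi(f)$ then depends only on the $n$ diagonal values $f(c_i,\dots,c_i)$ — which are unchanged when variables of $f$ are identified — $\xi$ is minor-preserving; and $\xi(f)$ on a finite $T\subseteq C^k$ is determined by $f$ on $T\cup\{(c_i,\dots,c_i):i\le n\}$, so $\xi$ is uniformly continuous. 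Finally, the parenthetical ``$\Pol(\bB)$'' part follows by composing this map with the reflection map of the first item (applicable since $\bB$ is homomorphically equivalent to its model-complete core $\bC$ by Theorem~\ref{thm:mc-core}), using that uniformly continuous minor-preserving maps are closed under composition.
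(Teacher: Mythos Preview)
The paper does not actually prove this proposition: it is stated with the preamble ``The following is implied by results in~\cite{wonderland}'' and no argument is given. Your proof is correct and is essentially the standard construction from that reference (reflections for the first item, restriction for the third, and diagonal-correction by an automorphism for the second), so there is nothing to compare against beyond noting that you have supplied what the paper outsources.
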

 
Hence, if $\Pol(\bC,c_1,\dots,c_n)$
or $\Pol(\bA)$ in Proposition~\ref{prop:coreh1}
 has a uniformly continuous minor-preserving map to 
$\Proj$, then $\Csp(\bB)$ is NP-hard by Theorem~\ref{thm:uch1}, because
the composition of uniformly continuous minor-preserving maps is uniformly continuous and minor-preserving. For model-complete cores, 
we will use the following result.


\begin{theorem}[Barto and Pinsker~\cite{BartoPinskerDichotomy}]
\label{thm:BP}
Let $\bC$ be an $\omega$-categorical relational$^{\ref{note1}}$ structure 
which is a model-complete core. 
Then at least one of the following holds. 
\begin{itemize}
\item $\bC$ has a pseudo-Siggers polymorphism,
i.e., a polymorphism $s \colon C^6 \to C$ 
and endomorphisms $e_1,e_2 \colon C \to C$
satisfying $$e_1\big(s(x,y,x,z,y,z) \big) \approx e_2 \big (s(y,x,z,x,z,y) \big).$$
\item $\Pol(\bC)$ has a uniformly continuous
minor-preserving map to $\Proj$.  
\end{itemize}
\end{theorem}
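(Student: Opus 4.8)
The plan is to establish the implication ``if $\bC$ has no pseudo-Siggers polymorphism, then $\Pol(\bC)$ has a uniformly continuous minor-preserving map to $\Proj$'', which is logically equivalent to the assertion that at least one of the two alternatives holds. Two ingredients will be combined: the finite-domain dichotomy recalled at the beginning of this section (a finite relational structure either has a Siggers polymorphism or its polymorphism clone has a minor-preserving map to $\Proj$), and the transfer results of~\cite{wonderland} recorded in Proposition~\ref{prop:coreh1}, which turn primitive positive constructions of $\bC$ into uniformly continuous minor-preserving maps out of $\Pol(\bC)$.

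The first and main step is to \emph{finitise} the hypothesis. The absence of a pseudo-Siggers polymorphism says that for every $6$-ary $s\in\Pol(\bC)$ and all $e_1,e_2\in\End(\bC)$ the identity $e_1(s(x,y,x,z,y,z))=e_2(s(y,x,z,x,z,y))$ fails on some triple from $C$. Since $\bC$ is $\omega$-categorical, $\Aut(\bC)$ is oligomorphic, and for a model-complete core one has $\End(\bC)=\overline{\Aut(\bC)}$; hence for each finite $F\subseteq C$ only finitely many ``behaviours on $F$'' of a candidate configuration $(s,e_1,e_2)$ are possible, and a compactness argument then shows that the global failure is already witnessed on some fixed finite $F$: no polymorphism can be Siggers on $F$ even modulo $\Aut(\bC)$. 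The crux is that this finite non-witness can be repackaged as a finite relational structure $\bD$ that is primitive positive constructible from $\bC$ — a suitable primitive positive power of $\bC$ followed by passing to the finite structure of $\Aut(\bC)$-orbits on $F$, so that the automorphisms are absorbed and the residual statement is precisely that $\Pol(\bD)$ has no ordinary Siggers operation.

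Granting $\bD$, the rest is short. By the finite-domain dichotomy, $\Pol(\bD)$ has a minor-preserving map to $\Proj$, and this map is uniformly continuous because $\bD$ is finite. Since $\bC$ primitive positive constructs $\bD$, Proposition~\ref{prop:coreh1} gives a uniformly continuous minor-preserving map $\Pol(\bC)\to\Pol(\bD)$. As the composition of uniformly continuous minor-preserving maps is again of this kind, composing yields a uniformly continuous minor-preserving map $\Pol(\bC)\to\Proj$, which is the second alternative.

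The main obstacle is the finitisation step together with the construction of $\bD$. Over an infinite domain the space of operations is not compact, so the compactness argument must be driven by oligomorphicity, keeping track only of the finitely many orbit-patterns relevant to a given finite set; and one must verify that the finite non-witness really does correspond to a primitive positive construction of $\bC$ whose polymorphism clone is Siggers-free. This is exactly the place where the ``pseudo'' — i.e.\ the group $\Aut(\bC)$, and the fact that $\bC$ being a model-complete core gives $\End(\bC)=\overline{\Aut(\bC)}$ — is traded in for the passage to the orbit structure, so that the ordinary (idempotent) finite dichotomy becomes applicable. The pull-back via Proposition~\ref{prop:coreh1} and the finite dichotomy itself are comparatively routine.
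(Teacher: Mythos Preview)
The paper does not contain a proof of Theorem~\ref{thm:BP}; it is quoted as a black box from Barto and Pinsker~\cite{BartoPinskerDichotomy} and used throughout as an input to the complexity classifications. So there is nothing to compare your proposal against in this paper.

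That said, your sketch is broadly in the spirit of the original Barto--Pinsker argument (compactness via oligomorphicity to obtain a finite witness set, passage to a finite orbit structure, application of the finite-domain Siggers dichotomy, then pulling back along the resulting minor-preserving map). Two remarks are worth making. First, you invoke Proposition~\ref{prop:coreh1} as providing uniformly continuous minor-preserving maps along arbitrary primitive positive constructions, but as stated in this paper the proposition only records three special cases (homomorphic equivalence, adding constants to a model-complete core, and pp-definable substructures); the general pp-construction $\Rightarrow$ uniformly continuous minor-preserving map result you need is in~\cite{wonderland} but is not what Proposition~\ref{prop:coreh1} actually says. Second, the step you flag as ``the main obstacle'' --- turning the absence of a pseudo-Siggers polymorphism into a finite structure $\bD$ that is pp-constructible from $\bC$ and has no Siggers polymorphism --- is indeed the heart of the Barto--Pinsker proof, and making it precise requires the canonical-function machinery and a careful topological argument that goes well beyond what you have written; your description of it is a plausible outline rather than a proof.
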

In this article we will show how to use the pseudo-Siggers identity to obtain structural information about $\bC$ 
if $\bC$ is of the form $(\bA,\neq)$ where $\bA$ is an $\omega$-categorical semilattice or 
abelian group.

\begin{remark}
In many situations, the two
items in Theorem~\ref{thm:BP} are mutually exclusive; two general conditions that imply this have been presented in~\cite{BKOPP-equations}. However, these conditions do not cover our setting, not even in the special case of 
semilattices. Abelian groups are covered, but this requires an extra argument that will be given in Section~\ref{sect:abelian}. 
\end{remark}  

\section{Algebras}
\label{sect:algebras}
An algebra $\bA$ is a structure with domain $A$ and with a purely functional signature. The $n$-ary polymorphisms of $\bA$ are precisely the (algebra) homomorphisms $g:A^n\rightarrow A$. 
In this section we make some observations
that are relevant for the universal-algebraic approach to the CSP of structures of the form $(\bA,\neq)$. 

{\bf Conventions.} 
We write $\omega = \{0,1,2,\dots\}$ for the set of natural numbers including zero.
The equality symbol is always allowed in first-order formulas.

\subsection{Homogeneity}
An important source of $\omega$-categorical
structures comes from Fra\"{i}ss\'e-amalgamation.
The \emph{age} of a $\tau$-structure is
the class of all finitely generated $\tau$-structures
that embed into the structure. A structure
is called \emph{homogeneous} if every
isomorphism between finitely generated substructures extends to an automorphism. 
Let $\tau$ be a countable signature and 
let $\mathcal K$ be a class of finitely generated 
$\tau$-structures which is closed under subalgebras, has the joint embedding property and the amalgamation property, and contains
countably many isomorphism types of structures. 
Then there exists a countable homogeneous
$\tau$-structure
$\mathscr F$  whose age is ${\mathcal K}$ (Theorem 6.1.2.~in~\cite{Hodges}). 
A structure $\bA$ is called \emph{uniformly locally finite} if there exists 
a function $f \colon \omega \to \omega$
such that every substructure of $\bA$ generated
by $n$ elements has at most $f(n)$ elements. 
If $\bA$ is $\omega$-categorical
then $\bA$ must be uniformly locally finite.
Conversely, every homogeneous uniformly locally finite structure is $\omega$-categorical (\cite{Hodges}, Corollary 6.2). 


\subsection{Model companions}
Let $\bA$ and $\bB$ be algebras with the same signature. 
Note that homomorphisms between
$(\bA,\neq)$ and $(\bB,\neq)$ must be embeddings (which is not true in general if $\bA$ and $\bB$ are arbitrary structures). It follows that structures of the form
$(\bA,\neq)$ must be cores. 
Two structures $\bA$ and $\bB$ are called \emph{companions}
if they satisfy the same universal first-order sentences (for instance, if $\bA$ is a semilattice, so is every companion of $\bA$). 
Note that in this case, $\bA$ and $\bB$ have the same age. 
The implication from (1) to (2) in the following
lemma can be shown
by a compactness argument (see, e.g., \cite{Cores-journal}); it is straightforward to prove the other implications in cyclic order. 

\begin{lemma}\label{lem:c}
Let $\bA$ and $\bB$ be $\omega$-categorical algebras. Then the following are equivalent. 
\begin{enumerate}
\item $\bA$ and $\bB$ are companions;
\item $\bA \hookrightarrow \bB$ and $\bB \hookrightarrow \bA$; 
\item $(\bA,\neq)$ and $(\bB,\neq)$ are homomorphically equivalent;
\item $\Csp(\bA,\neq)$ and $\Csp(\bB,\neq)$ are
the same computational problem;
\item $\Age(\bA) = \Age(\bB)$. 
\end{enumerate}
\end{lemma}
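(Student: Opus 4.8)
The plan is to prove the cycle of implications $(1)\Rightarrow(2)\Rightarrow(3)\Rightarrow(4)\Rightarrow(5)\Rightarrow(1)$. Of these, only $(1)\Rightarrow(2)$ requires genuine model-theoretic work, namely a compactness argument; the other four are routine once one recalls that an $\omega$-categorical algebra is uniformly locally finite, so that every finitely generated substructure of $\bA$ or $\bB$ is finite and hence completely described by a finite quantifier-free diagram.

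For $(1)\Rightarrow(2)$ I would argue as follows. Since $\bA$ and $\bB$ satisfy the same universal sentences, they satisfy the same existential sentences as well (pass to negations). Let $\Delta(\bA)$ be the diagram of $\bA$, i.e., the set of all atomic and negated atomic sentences true in $\bA$ after adjoining a fresh constant $\underline{a}$ for each $a\in A$. I claim that $\Th(\bB)\cup\Delta(\bA)$ is satisfiable: by compactness it suffices to satisfy $\Th(\bB)$ together with finitely many literals from $\Delta(\bA)$, and such a finite conjunction mentions only finitely many of the new constants, so existentially closing it produces an existential $\tau$-sentence true in $\bA$, hence true in $\bB$; interpreting the constants accordingly makes $\bB$ a model. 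Now pick any model $\bM\models\Th(\bB)\cup\Delta(\bA)$; in the $\tau$-reduct of $\bM$ the map $a\mapsto\underline{a}^{\bM}$ is an embedding of $\bA$. Since $A$ is countable, downward L\"owenheim--Skolem yields a countable $\bM_0\preceq\bM$ containing $\{\underline{a}^{\bM}:a\in A\}$; then $\bM_0$ is a countable model of $\Th(\bB)$, so $\bM_0\cong\bB$ by $\omega$-categoricity of $\bB$, and we obtain $\bA\hookrightarrow\bB$. The reverse embedding follows by symmetry, giving $(2)$.

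The remaining implications only unwind definitions. $(2)\Rightarrow(3)$: an embedding of algebras is an injective homomorphism, hence a homomorphism between $(\bA,\neq)$ and $(\bB,\neq)$, and together with the embedding in the other direction this is homomorphic equivalence. $(3)\Rightarrow(4)$: an instance of $\Csp(\bA,\neq)$ is satisfiable in $(\bA,\neq)$ iff the associated primitive positive sentence holds there, and primitive positive sentences are preserved by homomorphisms in both directions, so $(\bA,\neq)$ and $(\bB,\neq)$ accept exactly the same instances (this is the earlier observation that homomorphically equivalent structures have the same CSP). $(4)\Rightarrow(5)$: given a finitely generated, hence finite, substructure $\bC\le\bA$, take the instance consisting of its full atomic diagram together with the inequalities between all pairs of distinct elements; it is satisfiable in $(\bA,\neq)$, hence in $(\bB,\neq)$, and any satisfying assignment is an embedding $\bC\hookrightarrow\bB$, whence $\Age(\bA)\subseteq\Age(\bB)$, and symmetrically. $(5)\Rightarrow(1)$: a universal sentence $\forall\bar x\,\psi$ fails in $\bA$ iff some tuple $\bar a$ of $\bA$ satisfies $\neg\psi$, iff the finitely generated substructure of $\bA$ generated by $\bar a$ satisfies the existential sentence $\exists\bar x\,\neg\psi$; as existential sentences are preserved by embeddings, this happens for some member of $\Age(\bA)$ iff it happens for some member of $\Age(\bB)$, so equal ages force equal universal theories.

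The main obstacle is the step inside $(1)\Rightarrow(2)$ that passes from an arbitrary, possibly uncountable, model of $\Th(\bB)\cup\Delta(\bA)$ back to $\bB$ itself; this is precisely where $\omega$-categoricity of $\bB$ is used, via the L\"owenheim--Skolem step that preserves the embedded copy of $\bA$. Everything else is bookkeeping with finite quantifier-free diagrams and with the uniform local finiteness of $\omega$-categorical algebras.
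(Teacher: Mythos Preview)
Your proof is correct and follows exactly the route the paper indicates: the cycle $(1)\Rightarrow(2)\Rightarrow(3)\Rightarrow(4)\Rightarrow(5)\Rightarrow(1)$, with $(1)\Rightarrow(2)$ done by a compactness argument and the remaining implications treated as routine. The paper only sketches this (citing \cite{Cores-journal} for the compactness step and declaring the rest ``straightforward''), so your write-up simply fills in the details the paper omits.
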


A structure $\bB$ is called a \emph{model companion of $\bA$} if $\bA$ and $\bB$ are companions and $\bB$ is 
model-complete. 
Every $\omega$-categorical structure has a model companion~\cite{Saracino}, which is
unique up to isomorphism and $\omega$-categorical (see, e.g.,~\cite{HodgesLong}). 
For illustration, we present an example of an $\omega$-categorical algebra and its model companion. 

\begin{example}
For $a,b \in {\mathbb Q}$ 
we write $[a,b]$ for $\{x \in {\mathbb Q} \mid a \leq x \leq b\}$ and $\min$ for the binary operation
that returns the minimum of its two arguments. 
Then $([0,1];\min)$ and $({\mathbb Q};\min)$ are companions. Since $({\mathbb Q};\min)$ is model-complete, it is the model companion of $([0,1];\min)$.
\end{example} 



Unfortunately, several of the results
that we cited in Section~\ref{sect:ua}
were originally only formulated for relational signatures. But it is not difficult to see 
that they also hold for structures that might involve operations, as we will see in the following. The definition of model-complete cores for general $\omega$-categorical structures $\bB$ is the same as the one we gave for the relational case:
a structure $\bC$ is \emph{a model-complete core of $\bB$}  
if $\bC$ and $\bB$ are homomorphically equivalent and $\bC$ is a model-complete core. 

Let $\bB$ be a structure. 
We write $\bB^*$ for 
the relational structure obtained from $\bB$ by
replacing each operation $g$ in $\bB$ of arity $k$ 
by a relation symbol $R_g$ of arity $k+1$ that denotes in $\bB^*$ the graph 
of the operation $g^{\bB}$. 

\begin{remark}
There are homogeneous algebras $\bA$ such that $\bA^*$ is not homogeneous: for example, consider the group $\bA := {\mathbb Z}_2 \times {\mathbb Z}_3$ generated by an element $a$ of order 2 and an element $b$ of order 3. Then it is easy to verify that $\bA$ is homogeneous in the signature $\{\cdot\}$ of semigroups, but in $\bA^*$ 
the substructures induced by $\{a\}$ and $\{b\}$ 
are isomorphic, and no automorphism of $\bA^*$ maps $a$ to $b$. 
\end{remark}

\begin{lemma}
\label{lem:mc}
Let $\bB$ be an $\omega$-categorical structure.
Then $\bB$ has a model-complete core $\bC$, 
which is unique up to isomorphism and again $\omega$-categorical. Moreover,
$\bC^*$ is the model-complete core of $\bB^*$.  
\end{lemma}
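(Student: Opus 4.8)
The plan is to transfer Theorem~\ref{thm:mc-core} from the relational setting to the general setting by moving back and forth between $\bB$ and $\bB^*$ via the obvious bijection on relational structures. First I would observe that for any structure $\bD$, the relational reduct operation $\bD \mapsto \bD^*$ is injective on isomorphism types, preserves the domain, and commutes with taking substructures and products; moreover a map $h$ between two $\tau$-structures $\bD, \bE$ is a homomorphism $\bD \to \bE$ if and only if it is a homomorphism $\bD^* \to \bE^*$ (a function graph is preserved in both directions exactly when $h$ commutes with the operation), and likewise $h$ is an embedding of $\bD$ iff it is an embedding of $\bD^*$, and an endomorphism of $\bD$ iff an endomorphism of $\bD^*$. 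Consequently $\bD$ is a core iff $\bD^*$ is a core. I would also recall that $\bD$ is model-complete iff $\bD^*$ is model-complete: model-completeness is the assertion that every first-order formula is equivalent to an existential one, and the first-order theories of $\bD$ and $\bD^*$ are mutually interpretable by replacing each term $g(\bar x)$ with a fresh existentially quantified variable constrained by $R_g$, so existential formulas correspond to existential formulas. Hence ``$\bC$ is a model-complete core'' is equivalent to ``$\bC^*$ is a model-complete core.'' Finally, $\bD$ is $\omega$-categorical iff $\bD^*$ is, since $\bD$ and $\bD^*$ have the same automorphism group acting on the same domain, and $\omega$-categoricity is equivalent to the automorphism group being oligomorphic (Ryll-Nardzewski).

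Granting all of this, the argument is short. Apply Theorem~\ref{thm:mc-core} to the $\omega$-categorical relational structure $\bB^*$: it yields a model-complete core $\bC'$ of $\bB^*$, unique up to isomorphism and $\omega$-categorical. Since $\bC'$ has a purely relational signature of the form $\{R_g : g \in \tau\}$ and is homomorphically equivalent to $\bB^*$, and since $\bB^*$ is the relational version of the algebra $\bB$, I would check that $\bC'$ is itself of the form $\bC^*$ for an algebra $\bC$ with signature $\tau$: here one uses that $\bC'$ embeds into $\bB^*$, hence each relation $R_g^{\bC'}$ is the restriction of the graph of a function (a graph of a $k$-ary function is characterized by a universal first-order sentence saying ``for all $\bar x$ there is exactly one $y$ with $R_g(\bar x, y)$,'' and this sentence is preserved under homomorphic equivalence in the relevant direction, or more directly: the homomorphism $\bB^* \to \bC'$ composed with the embedding $\bC' \to \bB^*$ witnesses that the defining universal-existential axioms of a function graph transfer). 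So set $\bC$ to be the algebra with $\bC^* = \bC'$. Then $\bC$ is $\omega$-categorical, a model-complete core by the equivalences above, and homomorphically equivalent to $\bB$ because $\bC^* = \bC'$ is homomorphically equivalent to $\bB^*$ and homomorphisms of relational versions are homomorphisms of the algebras. Uniqueness up to isomorphism of $\bC$ follows from uniqueness of $\bC'$ together with injectivity of $\bD \mapsto \bD^*$ on isomorphism types. The last sentence of the lemma, that $\bC^*$ is the model-complete core of $\bB^*$, is then immediate since $\bC^* = \bC'$.

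The main obstacle is the one genuinely nontrivial point buried above: verifying that the model-complete core $\bC'$ of $\bB^*$ produced by Theorem~\ref{thm:mc-core} actually has all its relations $R_g^{\bC'}$ equal to graphs of total functions on the domain of $\bC'$, i.e.\ that $\bC'$ ``is'' an algebra rather than merely a relational structure in the signature $\tau^*$. The subtlety is that homomorphic equivalence does not in general preserve the property of being a function graph (a homomorphism can collapse distinct $y$'s, destroying totality-plus-uniqueness in one direction). I would resolve this by exploiting that $\bC'$ is not just homomorphically equivalent to $\bB^*$ but is a \emph{core}: the composition of a homomorphism $\bB^* \to \bC'$ with one $\bC' \to \bB^*$ is an endomorphism of the core $\bC'$, hence an embedding, and an embedding of $\bC'$ into $\bB^*$ pulls back the function-graph axioms of $\bB^*$ (totality is $\forall\bar x\,\exists y\,R_g(\bar x,y)$ and single-valuedness is $\forall\bar x\,\forall y\,\forall y'\,(R_g(\bar x,y)\wedge R_g(\bar x,y')\to y=y')$; the latter is universal and so survives passing to a substructure, while totality survives because the embedding $\bC'\hookrightarrow\bB^*$ is elementary, as $\bB^*$ is model-complete and $\bC'\equiv\bB^*$ — alternatively, because $\bC'$ has the same age as $\bB^*$). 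Once that is in hand, everything else is the bookkeeping indicated above.
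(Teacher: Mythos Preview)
Your overall strategy coincides with the paper's: pass to $\bB^*$, apply Theorem~\ref{thm:mc-core} to get a relational model-complete core $\bC'$, and then check that $\bC'$ is of the form $\bC^*$ for a $\tau$-algebra $\bC$. You also correctly isolate the one nontrivial step, namely that each $R_g^{\bC'}$ is the graph of a total operation on $C'$.

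The gap is in your treatment of totality. You argue that an embedding $\bC' \hookrightarrow \bB^*$ is elementary ``as $\bB^*$ is model-complete and $\bC' \equiv \bB^*$'', but neither clause is available: $\bB$ is only assumed $\omega$-categorical, and the model-complete core of a structure is in general \emph{not} elementarily equivalent to it (nor does it have the same age, so your alternative justification also fails). A related slip occurs earlier: from the fact that $h\circ i$ is an embedding of the core $\bC'$ you only get that $i$ is \emph{injective}; concluding that $i$ is an embedding (reflects relations), let alone an elementary one, already requires the step you are trying to justify. And even granting an embedding $\bC' \hookrightarrow \bB^*$, totality is a $\forall\exists$ sentence and does not pass to relational substructures.

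The repair, which is exactly what the paper does, is to exploit model-completeness of $\bC'$ rather than of $\bB^*$. Given $u_1,\dots,u_n\in C'$, set $z:=g^{\bB}(i(u_1),\dots,i(u_n))$, so $(i(\bar u),z)\in R_g^{\bB^*}$ and hence $(h\circ i(\bar u),h(z))\in R_g^{\bC'}$. Now $h\circ i$ is an endomorphism of the model-complete core $\bC'$, hence an \emph{elementary} self-embedding, so the existential formula $\exists y\,R_g(\bar x,y)$ transfers back from $h\circ i(\bar u)$ to $\bar u$. Single-valuedness then follows from injectivity of $i$ exactly as you indicate. With this correction in place your outline matches the paper's proof.
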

\begin{proof}
By Theorem~\ref{thm:mc-core}, the relational structure $\bB^*$ has a model-complete core $\bC'$ which is $\omega$-categorical. 
Since $\bC'$ and $\bB^*$ are homomorphically equivalent, there are 
homomorphisms $h \colon \bB^* \to \bC'$
and $i \colon \bC' \to \bB^*$. 
For each $k$-ary function symbol $g$ from the signature $\tau$ of $\bB$, the
relation denoted by $R_g$ in $\bC'$ is the graph of a $k$-ary operation on $C'$. Indeed\footnote{This would not be true for arbitrary structures $\bC'$ that are homomorphically equivalent to $\bB^*$, but we will use model-completeness.} let 
$\phi(x_1,\dots,x_n)$ be the formula
$\exists z \colon R_g(x_1,\dots,x_n,z)$ and let $u_1,\dots,u_n \in C'$. Then there exists $z \in B$ such that $g^{\bB}(i(u_1),\dots,i(u_n)) = z$. 
Hence, $(i(u_1),\dots,i(u_n),z) \in R^{\bB^*}_{g}$
and thus $(h \circ i(u_1),\dots,h \circ i(u_n),h(z)) \in 
R^{\bC'}_g$, so $\bC' \models \phi(h \circ i(u_1),\dots,h \circ i(u_n))$ and thus $\bC' \models
\phi(u_1,\dots,u_n)$. Moreover, if
$(u_1,\dots,u_n,a),(u_1,\dots,u_n,b) \in R_g^{\bC'}$ then $(i(u_1),\dots,i(u_n),i(a)), (i(u_1),\dots,i(u_n),i(b)) \in R_g^{\bB^*}$, so 
$i(a) = i(b)$ since $R_g^{\bB^*}$ is the graph of the function $g^{\bB}$. Thus, $a=b$ because $i$ is injective. 

Let $\bC$ be the $\tau$-structure 
with the same domain and relations as $\bC'$ and 
such that every operation symbol 
$g \in \tau$ 
denotes the operation whose graph is
$R^{\bC'}_g$. Clearly, $\bC^*$ equals $\bC'$. We prove that $\bC$ is a model-complete core of $\bB$: 
the maps $h$ and $i$ are homomorphisms
from $\bB$ to $\bC$ and from $\bC$ to $\bB$, respectively, showing that $\bB$ and $\bC$ are homomorphically equivalent.  
Every endomorphism of $\bC$ is an endomorphism of $\bC'$, and hence 
preserves all first-order formulas over $\bC'$ and also preserves all first-order formulas over $\bC$. So $\bC$ is  
a model-complete core. 

If $\bD$ is a model-complete core that is homomorphically equivalent with $\bB$, then $\bD^*$ is homomorphically equivalent to $\bC^*$, and hence $\bD^*$ and $\bC^*$ are isomorphic. It follows that
$\bD$ and $\bC$ are isomorphic, showing
the uniqueness of $\bC$ up to isomorphism. 
\end{proof}

As in the relational case, because
of the uniqueness of the model-complete
core up to isomorphism we call 
$\bC$ \emph{the} model-complete core of $\bB$. 

\begin{remark}\label{rem:gen-sig}
Similarly as in the proof of Lemma~\ref{lem:mc} it can be shown that the assumption in Theorem~\ref{thm:uch1}, Proposition~\ref{prop:coreh1}, and Theorem~\ref{thm:BP}
that the structures are relational can be dropped. 
\end{remark}

\begin{corollary}
Let $\bA$ be an $\omega$-categorial algebra and $\bC$ its model companion. 
Then $(\bC,\neq)$ is the model-complete core of $(\bA,\neq)$. 
\end{corollary}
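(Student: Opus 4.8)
The plan is to verify directly that $(\bC,\neq)$ satisfies the defining properties of the model-complete core of $(\bA,\neq)$ — that it is $\omega$-categorical, homomorphically equivalent to $(\bA,\neq)$, model-complete, and a core — and then to appeal to the uniqueness-up-to-isomorphism clause of Lemma~\ref{lem:mc}, applied to the $\omega$-categorical structure $(\bA,\neq)$, to justify the definite article in the statement.

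First I would record the routine points. Since $\bC$ is the model companion of $\bA$, the algebras $\bA$ and $\bC$ are companions and both $\omega$-categorical, so Lemma~\ref{lem:c} (the equivalence of (1) and (3)) yields that $(\bA,\neq)$ and $(\bC,\neq)$ are homomorphically equivalent. Next, as observed at the start of the discussion of model companions in Section~\ref{sect:algebras}, every homomorphism between structures of the form $(\bA',\neq)$ and $(\bB',\neq)$ over a common functional signature is an embedding; in particular every endomorphism of $(\bC,\neq)$ is an embedding, so $(\bC,\neq)$ is a core. Finally, the model companion $\bC$ of $\bA$ is $\omega$-categorical, and $(\bC,\neq)$ is obtained from $\bC$ by adding a first-order definable relation, hence $(\bC,\neq)$ is $\omega$-categorical as well.

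The only point requiring a short argument is that $(\bC,\neq)$ is model-complete. Here I would use that $\neq$ is quantifier-free definable in $\bC$ via $\neg(x=y)$. Given any first-order formula $\chi$ over the signature of $(\bC,\neq)$, replacing every atom of the form ``$x \neq y$'' by ``$\neg(x=y)$'' produces a formula $\chi'$ over the signature of $\bC$ such that $\chi \leftrightarrow \chi'$ holds in every expansion of a model of $\Th(\bC)$ by the relation $\neq$. By model-completeness of $\bC$, the formula $\chi'$ is equivalent, modulo $\Th(\bC)$, to an existential formula $\exists \bar z\, \theta(\bar x,\bar z)$ over the signature of $\bC$, which is again an existential formula over the signature of $(\bC,\neq)$. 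Since every model of $\Th(\bC,\neq)$ is precisely the expansion of a model of $\Th(\bC)$ by $\neq$, the two equivalences combine to show that $\chi$ is equivalent to $\exists \bar z\, \theta$ modulo $\Th(\bC,\neq)$. Thus every first-order formula over $(\bC,\neq)$ is equivalent to an existential one, i.e., $(\bC,\neq)$ is model-complete.

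Assembling these observations, $(\bC,\neq)$ is an $\omega$-categorical, model-complete core that is homomorphically equivalent to $(\bA,\neq)$, and so, by the uniqueness part of Lemma~\ref{lem:mc}, it is the model-complete core of $(\bA,\neq)$. I do not expect a genuine obstacle in this argument; the only step that is not a direct citation is the remark that expanding $\bC$ by the quantifier-free definable relation $\neq$ preserves model-completeness, and even that is immediate once one notes what the models of $\Th(\bC,\neq)$ are.
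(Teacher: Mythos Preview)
Your proof is correct and follows essentially the same route as the paper: show $(\bC,\neq)$ is a model-complete core homomorphically equivalent to $(\bA,\neq)$ via Lemma~\ref{lem:c}, then invoke uniqueness from Lemma~\ref{lem:mc}. The only difference is that you spell out why $(\bC,\neq)$ inherits model-completeness from $\bC$, whereas the paper takes this as evident.
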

\begin{proof}
The structure $(\bC,\neq)$ 
is a model-complete core and homomorphically equivalent to $(\bA,\neq)$ by Lemma~\ref{lem:c}. 
So the model-complete core of $(\bA,\neq)$
must be isomorphic to $(\bC,\neq)$. 
\end{proof}

\subsection{Square embeddings}
\label{sect:square-emb}
In theoretical computer science~\cite{NelsonOppen80}, a first-order $\tau$-theory 
$T$ is called \emph{convex} if for every finite set of atomic $\tau$-formulas $S$ the set 
$$T \cup S \cup \{x_1 \neq y_1,\dots,x_m \neq y_m\}$$ 
is satisfiable if and only if $T \cup S \cup \{x_i \neq y_i\}$ is satisfiable for each $i \in \{1,\dots,m\}$.
If $\bA$ is a structure, we write $\Th(\bA)$ for 
the first-order theory of $\bA$, i.e., for the set of all first-order sentences that hold in $\bA$. 
If $T = \Th(\bA)$ for some structure $\bA$, then an alternative terminology~\cite{BroxvallJonssonRenz} for convexity is that $\neq$ is \emph{1-independent from $\bA$}.  

\begin{proposition}
\label{prop:convex}
Let $\bA$ be an $\omega$-categorical algebra. 
Then the following are equivalent. 
\begin{enumerate}
\item $\Th(\bA)$ is convex;
\item $\bA$ has a binary injective polymorphism;
\item $\bA^2 \hookrightarrow \bA$; 
\item $\bA^k \hookrightarrow \bA$ for all $k \in \omega$; 
\item $\Age(\bA)$ is closed under finite direct products.
\end{enumerate}
\end{proposition}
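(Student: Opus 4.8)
The plan is to prove the five conditions equivalent by closing the cycle $(1)\Rightarrow(5)\Rightarrow(3)\Rightarrow(1)$, while also recording the two immediate equivalences $(2)\Leftrightarrow(3)$ and $(3)\Leftrightarrow(4)$. For $(2)\Leftrightarrow(3)$ one only unwinds definitions: the $n$-ary polymorphisms of an algebra are precisely its homomorphisms $\bA^n\to\bA$, and since the language has no relation symbols an injective homomorphism between algebras is the same as an embedding, so $(2)$ asserts exactly that $\bA^2\hookrightarrow\bA$. For $(3)\Leftrightarrow(4)$, the implication $(4)\Rightarrow(3)$ is the case $k=2$, and $(3)\Rightarrow(4)$ follows by induction on $k$: from $\bA^{k+1}\cong\bA^k\times\bA$ and the inductive embedding $\bA^k\hookrightarrow\bA$ one gets $\bA^{k+1}\hookrightarrow\bA\times\bA=\bA^2\hookrightarrow\bA$. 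For $(3)\Rightarrow(1)$, suppose $S$ is a finite set of atomic $\tau$-formulas such that, for $i=1,\dots,m$, the set $S\cup\{x_i\neq y_i\}$ is satisfiable in $\bA$, by an assignment $\alpha_i$ say; using $(4)$, fix an embedding $e\colon\bA^m\hookrightarrow\bA$ and put $\beta(v):=e(\alpha_1(v),\dots,\alpha_m(v))$. Since $e$ is a homomorphism, $t^{\bA}(\beta(\bar v))=e\bigl(t^{\bA}(\alpha_1(\bar v)),\dots,t^{\bA}(\alpha_m(\bar v))\bigr)$ for every term $t$, so $\beta\models S$; and injectivity of $e$ together with $\alpha_i(x_i)\neq\alpha_i(y_i)$ gives $\beta(x_i)\neq\beta(y_i)$. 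Hence $S\cup\{x_1\neq y_1,\dots,x_m\neq y_m\}$ is satisfiable in $\bA$; the converse being trivial and $\Th(\bA)$ being complete (so that satisfiability ``together with $\Th(\bA)$'' means satisfiability in $\bA$), this is exactly convexity of $\Th(\bA)$.

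For $(1)\Rightarrow(5)$ I would argue directly from the definition of convexity. Since the members of $\Age(\bA)$ are finite (an $\omega$-categorical structure is uniformly locally finite) and products of more factors are handled by iteration, it suffices to prove $\bB_1\times\bB_2\hookrightarrow\bA$ for arbitrary $\bB_1,\bB_2\in\Age(\bA)$. Writing $B_1=\{b_1,\dots,b_p\}$ and $B_2=\{c_1,\dots,c_q\}$, introduce a variable $w_{rs}$ for each of the $pq$ elements $(b_r,c_s)$ of $\bB_1\times\bB_2$, and let $S$ be the positive diagram of $\bB_1\times\bB_2$ in these variables, i.e.\ the finite set of all atomic statements $g(w_{r_1s_1},\dots,w_{r_ks_k})=w_{r_0s_0}$ recording its operation tables. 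An embedding $\bB_1\times\bB_2\hookrightarrow\bA$ is then the same thing as a solution in $\bA$ of $S$ together with all inequalities $w_{rs}\neq w_{r's'}$, so by convexity it suffices to realize $S$ together with one such inequality at a time. For $w_{rs}\neq w_{r's'}$ with, say, $r\neq r'$, fix an embedding $h\colon\bB_1\hookrightarrow\bA$ and assign $w_{uv}\mapsto h(b_u)$: this ``collapses'' the second coordinate, satisfies $S$ since $h$ is a homomorphism, and satisfies the inequality since $h$ is injective. Hence $\bB_1\times\bB_2$ embeds into $\bA$, and being finite it lies in $\Age(\bA)$.

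The substantive implication is $(5)\Rightarrow(3)$, which I would split in two steps. First, $(5)$ gives $\Age(\bA^2)\subseteq\Age(\bA)$: any finite substructure $\bP$ of $\bA^2$ embeds into $\bB_1\times\bB_2$, where $\bB_i$ is the image of $\bP$ under the $i$-th coordinate projection (a finite substructure of $\bA$), and by $(5)$ this product embeds into $\bA$, hence so does $\bP$. Second, I would upgrade ``every finite subalgebra of $\bA^2$ embeds into $\bA$'' to ``$\bA^2\hookrightarrow\bA$'' by compactness: expand the signature $\tau$ of $\bA$ by a constant $c_p$ for every $p\in A^2$, and let $\Sigma$ be the theory $\Th(\bA)$ together with all sentences $g(c_{p_1},\dots,c_{p_k})=c_{g^{\bA^2}(p_1,\dots,p_k)}$ (for operation symbols $g\in\tau$ and $p_1,\dots,p_k\in A^2$) and all sentences $c_p\neq c_q$ with $p\neq q$. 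A finite $\Sigma_0\subseteq\Sigma$ mentions finitely many constants, and the subalgebra $\bP$ of $\bA^2$ they generate is finite (as $\bA$, hence $\bA^2$, is uniformly locally finite) and lies in $\Age(\bA^2)\subseteq\Age(\bA)$; interpreting $c_p$ as $\phi(p)$ for an embedding $\phi\colon\bP\hookrightarrow\bA$, and arbitrarily for the remaining $p$, makes $\bA$ a model of $\Sigma_0$. By compactness $\Sigma$ has a model; its $\tau$-reduct $\bA'$ is a model of $\Th(\bA)$, and $p\mapsto c_p^{\bA'}$ is an injective homomorphism $\bA^2\to\bA'$, that is, an embedding. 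Since $\bA^2$ and $\bA$ are countable, a countable elementary substructure of $\bA'$ containing this copy of $\bA^2$ is again a model of $\Th(\bA)$, hence isomorphic to $\bA$ by $\omega$-categoricity, so $\bA^2\hookrightarrow\bA$. (If $\bA$ is finite the equivalence is immediate, since all five statements hold when $|A|\le 1$ and all fail when $|A|\ge 2$.)

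I expect $(5)\Rightarrow(3)$ to be the main obstacle. Closure of the age under products only provides embeddings of the individual finite subalgebras of $\bA^2$, and since $\bA$ need not be homogeneous these cannot be amalgamated directly into a single embedding of $\bA^2$; the detour through an elementary extension, undone by $\omega$-categoricity via downward L\"owenheim--Skolem, is exactly what compensates for the missing homogeneity. The other implications are straightforward.
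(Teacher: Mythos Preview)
Your proof is correct. The route differs from the paper's in one substantive respect: the paper closes the cycle as $(1)\Leftrightarrow(2)\Rightarrow(3)\Rightarrow(4)\Rightarrow(5)\Rightarrow(1)$, where the equivalence $(1)\Leftrightarrow(2)$ is simply cited from~\cite{Bodirsky-HDR}, and the only argument actually written out is $(5)\Rightarrow(1)$ (which is your $(3)/(4)\Rightarrow(1)$ verbatim, using the product of the individual solutions). You instead close the cycle through $(1)\Rightarrow(5)\Rightarrow(3)$, which forces you to supply two pieces the paper never needs: the direct argument for $(1)\Rightarrow(5)$ via the positive diagram of $\bB_1\times\bB_2$ and coordinate collapse, and the compactness/L\"owenheim--Skolem argument for $(5)\Rightarrow(3)$. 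Both are fine, and your proof has the virtue of being fully self-contained rather than deferring the key implication $(1)\Rightarrow(2)$ to a reference; the price is that the step you flag as the ``main obstacle,'' $(5)\Rightarrow(3)$, is one the paper sidesteps entirely by routing through the cited $(1)\Rightarrow(2)$. Your diagnosis that this is where the real work lies is therefore an artefact of your chosen cycle rather than of the statement itself.
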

\begin{proof}
The equivalence of $(1)$ and $(2)$ is shown for relational $\omega$-categorical structures~\cite{Bodirsky-HDR}, and the same proof also works for $\omega$-categorical structures with functions. 
The implication from $(2)$ to $(3)$ holds
because $\bA$ is an algebra. 
The implications from (3) to (4) and
from (4) to (5) are clear. 
For the implication from (5) to (1), 
suppose that 
$\Th(\bA) \cup S \cup \{x_i \neq y_i\}$ is satisfiable for each $i \in \{1,\dots,m\}$. 
Let $\bA_i$ be the substructure of $\bA$ induced by the variables of $S$ and 
$\{x_i,y_i\}$. Then by assumption 
$\bA_1 \times \cdots \times \bA_m$ is a substructure of $\bA$ and witnesses that
$\Th(\bA) \cup S \cup \{x_1 \neq y_1,\dots,x_m \neq y_m\}$ 
is satisfiable. 
\end{proof}

We present a pair of applications of
square embeddings in the context of equation solving. 


\begin{proposition}\label{prop:constants}
Let $\bA$ be a model-complete $\omega$-categorical structure with finite signature $\tau$ such that $\bA^2 \hookrightarrow \bA$. 
Then there are finitely many $a_1,\dots,a_n \in A$ such that 
$\Csp(\bA,\neq)$ is polynomial-time equivalent
to $\Csp(\bA,a_1,\dots,a_n)$. 
\end{proposition}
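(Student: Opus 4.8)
The plan is to reduce $\Csp(\bA,\neq)$ to $\Csp(\bA,a_1,\dots,a_n)$ by showing that the inequality constraints can be ``compiled away'' into a bounded set of constants, using the hypothesis $\bA^2 \hookrightarrow \bA$ together with model-completeness. The first step is to pick the constants: since $\bA$ is $\omega$-categorical with finite signature $\tau$, and since $\bA$ is model-complete, every first-order formula over $\bA$ is equivalent to an existential one; in particular the $\neq$ relation is existential positive-definable after adding constants, or more directly, we use that $\bA^2 \hookrightarrow \bA$ to separate elements. Concretely, fix an embedding $e \colon \bA^2 \hookrightarrow \bA$. I would choose two elements $a_0 := e(u,u)$-type diagonal points and use the fact that in $\bA^2$ the pairs $(u,v)$ with $u \neq v$ are distinguished from the diagonal; pushing through $e$ gives two distinct elements $a_1 \neq a_2$ in $A$, and by homogeneity/$\omega$-categoricity the orbit of this pair is primitive positive definable, so $x \neq y$ is equivalent over $(\bA,a_1,a_2)$ to a primitive positive formula. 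This is the key structural input and I expect it to be where most of the care is needed.

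Granting a primitive positive definition of $\neq$ in $(\bA,a_1,\dots,a_n)$, the reduction from $\Csp(\bA,\neq)$ to $\Csp(\bA,a_1,\dots,a_n)$ is then routine: given an instance $\phi$ of $\Csp(\bA,\neq)$, replace each atom $s \neq t$ by the primitive positive formula witnessing the definition (introducing the finitely many fresh existentially quantified variables it requires, and the constant symbols $a_1,\dots,a_n$), obtaining in polynomial time an instance $\phi'$ of $\Csp(\bA,a_1,\dots,a_n)$ with $\phi$ satisfiable in $(\bA,\neq)$ iff $\phi'$ satisfiable in $(\bA,a_1,\dots,a_n)$. The size blow-up is linear since $\tau$ and the pp-definition are fixed. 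For the converse direction (which is needed for polynomial-time \emph{equivalence}, not just a one-way reduction), I would invoke the results of~\cite{Cores-journal,BodHilsMartin-Journal} cited earlier in the excerpt: since $\bA$ is a model-complete core (structures of the form $(\bA,\neq)$, and $\bA$ model-complete, put us in the relevant setting), $\Csp(\bA,a_1,\dots,a_n)$ reduces in polynomial time to $\Csp(\bA,\neq)$ by simulating the constants via fresh variables pinned down by inequalities to the other named elements together with pp-formulas coming from the orbit structure.

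The main obstacle I anticipate is making precise the claim that $\neq$ becomes primitive positive definable after adding finitely many constants: a priori $\bA^2 \hookrightarrow \bA$ only gives \emph{convexity} of $\Th(\bA)$ (Proposition~\ref{prop:convex}), which is about satisfiability of conjunctions of atoms with several disequations, and one must bridge from convexity to an actual pp-definition of $\neq$. The route I would take is: convexity says that a conjunction $S$ of atoms together with $x_1\neq y_1 \wedge \cdots \wedge x_m \neq y_m$ is satisfiable iff each $S \wedge x_i \neq y_i$ is; combined with $\omega$-categoricity, the relation $\{(x,y) : x \neq y\}$ is preserved by all polymorphisms of $\bA$ that fix suitable constants, and then by the Galois connection (preservation $\Leftrightarrow$ pp-definability for $\omega$-categorical structures, cited in the excerpt after Theorem~\ref{thm:mc-core}) it is pp-definable over $(\bA,a_1,\dots,a_n)$ for an appropriate finite choice of the $a_i$. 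Verifying that a \emph{finite} set of constants suffices — rather than needing all of $A$ — is exactly where $\omega$-categoricity (finitely many orbits of $n$-tuples) does the work, and this quantifier-counting is the step I would write most carefully.
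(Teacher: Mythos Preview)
Your plan has a genuine gap at precisely the point you flagged as the main obstacle: the claim that $\neq$ becomes primitive positive definable in $(\bA,a_1,\dots,a_n)$ for some finite tuple of constants is false in general, and your suggested route via convexity and the Galois connection does not establish it. Convexity is a statement about satisfiability of systems of atoms; it does not imply that every polymorphism of $(\bA,a_1,\dots,a_n)$ preserves $\neq$, which is what you would need for the Galois connection to apply.

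A concrete counterexample is $\bA = ({\mathbb Z}_2^{(\omega)};+)$, which is homogeneous (hence model-complete), $\omega$-categorical, and satisfies $\bA^2 \simeq \bA$. For \emph{any} finite set of constants $a_1,\dots,a_n$, the Maltsev operation $m(x,y,z) := x+y+z$ is an idempotent polymorphism of $(\bA,a_1,\dots,a_n)$, yet $m$ does not preserve $\neq$: choose nonzero $u,v \in A$ and set $x_1=u$, $x_2=v$, $x_3=u+v$, $y_1=y_2=y_3=0$; then $x_i \neq y_i$ for each $i$ but $m(x_1,x_2,x_3)=0=m(y_1,y_2,y_3)$. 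Hence $\neq$ is not pp-definable over $(\bA,a_1,\dots,a_n)$ no matter which constants you add, and your reduction cannot proceed by substituting a pp-definition for each disequation.

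The paper's argument avoids this entirely and uses convexity in a different way. By Proposition~\ref{prop:convex}, an instance $\phi \wedge \bigwedge_{i=1}^{m} x_i \neq y_i$ is satisfiable iff each $\phi \wedge x_i \neq y_i$ is. For a single disequation, one observes that $\phi \wedge x \neq y$ is satisfiable iff $\phi \wedge x=a \wedge y=a'$ is satisfiable for some pair $(a,a')$ chosen among representatives $a_1,\dots,a_l$ of the finitely many $\Aut(\bA)$-orbits of pairs. Thus one reduces to a bounded number of instances of $\Csp(\bA,a_1,\dots,a_l)$, giving a polynomial-time (indeed $\mathrm{AC}_0$) Turing reduction. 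The reverse reduction is, as you say, by citation to the known result that constants can be eliminated over a model-complete $\omega$-categorical structure.
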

\begin{proof}
We have already mentioned in the introduction
that if $\bB$ is a model-complete $\omega$-categorical structure then for all $a_1,\dots,a_n \in B$ there is a polynomial-time reduction from $\Csp(\bB,a_1,\dots,a_n)$ to $\Csp(\bB)$; see~\cite{Cores-journal,BodHilsMartin}. 
For the converse reduction, note that 
for every conjunction of atomic $\tau$-formulas $\phi$ the formula 
$$\phi \wedge s_1 \neq t_1 \wedge \cdots \wedge s_m \neq t_m$$
is satisfiable in $\bA$ if and only if $\phi \wedge s_i \neq t_i$ is satisfiable in $\bA$ for each $i \in \{1,\dots,m\}$, because $\Th(\bA)$ is convex. 
By introducing new variables and new identities in $\phi$, we may assume that each of the conjuncts $s_i \neq t_i$ is in fact of the form $x_i \neq y_i$ for variables $x_i$ and $y_i$.
To test whether $\phi \wedge x_i \neq y_i$ is satisfiable, we pick representatives $a_1,\dots,a_l$ for each orbit of pairs in $\Aut(\bA)$. 
Note that $\phi \wedge x_i \neq y_i$ is satisfiable
if and only if $\phi \wedge x_i = a \wedge y_i = a'$ is satisfiable in $\bA$ for some orbit representatives $a,a' \in \{a_1,\dots,a_l\}$. Hence,
$\Csp(\bA,\neq)$ can be reduced 
to $\Csp(\bA,a_1,\dots,a_l)$. The reduction is in  AC$_0$, and in particular in Logspace and Ptime. 
\end{proof} 

\begin{proposition}\label{prop:newproduct} Let $\bA$ and $\bB$ be $\omega$-categorical algebras with the same signature $\tau$ such that $\bA^2 \hookrightarrow \bA$.
 If $\Csp(\bA,\neq)$ is in P  then there is a polynomial-time reduction from $\Csp(\bA\times \bB,\neq)$  to $\Csp(\bB,\neq)$. 
\end{proposition}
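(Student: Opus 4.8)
The plan is to exploit the product structure of $\bA\times\bB$. An assignment of values in $A\times B$ to the variables of an instance $\phi$ of $\Csp(\bA\times\bB,\neq)$ is the same thing as a pair consisting of an assignment $\bar a$ into $\bA$ and an assignment $\bar b$ into $\bB$: an equation $s=t$ of $\phi$ holds under $(\bar a,\bar b)$ in $\bA\times\bB$ if and only if it holds under $\bar a$ in $\bA$ and under $\bar b$ in $\bB$, whereas a disequation $s\neq t$ holds under $(\bar a,\bar b)$ in $\bA\times\bB$ if and only if it holds under $\bar a$ in $\bA$ \emph{or} under $\bar b$ in $\bB$. Thus the equations of $\phi$ must be satisfied on both coordinates, while each disequation may be ``delegated'' to either coordinate. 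The obstacle this creates is that there are exponentially many ways of splitting the disequations between the two sides; the main point is that convexity of $\Th(\bA)$ — which holds by Proposition~\ref{prop:convex} because $\bA^2\hookrightarrow\bA$ — makes the split canonical: a disequation can be delegated to the $\bA$-side as part of a global $\bA$-solution of the equations exactly when it is individually consistent with those equations over $\bA$.

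Concretely, I would first, by introducing new variables and equations (as in the reductions discussed in the introduction), assume that every disequation of $\phi$ has the form $x\neq y$ for variables $x,y$; this enlarges the instance only by a linear factor and preserves satisfiability in every $\tau$-structure. Write the resulting instance as $\phi_{=}\wedge\bigwedge_{i=1}^{m}(x_i\neq y_i)$, where $\phi_{=}$ is a conjunction of $\tau$-equations. Using the polynomial-time algorithm for $\Csp(\bA,\neq)$ I would then test whether $\phi_{=}$ is satisfiable in $\bA$; if it is not, then $\phi$ is unsatisfiable in $\bA\times\bB$ and the reduction outputs a fixed unsatisfiable instance of $\Csp(\bB,\neq)$, such as $z\neq z$. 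Otherwise, by $m$ further calls to the algorithm for $\Csp(\bA,\neq)$, I would compute
\[ I := \{\, i\in\{1,\dots,m\} \mid \phi_{=}\wedge(x_i\neq y_i) \text{ is satisfiable in } \bA \,\}, \]
and the reduction outputs the $\Csp(\bB,\neq)$-instance
\[ \psi := \phi_{=}\wedge\bigwedge_{i\notin I}(x_i\neq y_i). \]
Since $\Csp(\bA,\neq)$ is in P, this map from $\phi$ to its output is computable in polynomial time.

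It remains to check correctness, i.e.\ that $\phi$ is satisfiable in $\bA\times\bB$ if and only if the instance output by the reduction is satisfiable in $\bB$. The case where $\phi_{=}$ is unsatisfiable in $\bA$ is immediate, since the $\bA$-coordinate of any solution of $\phi$ would satisfy $\phi_{=}$; so assume $\phi_{=}$ is satisfiable in $\bA$, and I claim that then $\phi$ is satisfiable in $\bA\times\bB$ iff $\psi$ is satisfiable in $\bB$. For the forward direction, a solution $(\bar a,\bar b)$ of $\phi$ has $\bar a\models\phi_{=}$ in $\bA$ and $\bar b\models\phi_{=}$ in $\bB$, and for each $i\notin I$ the formula $\phi_{=}\wedge(x_i\neq y_i)$ is unsatisfiable in $\bA$ while $\bar a\models\phi_{=}$, so $\bar a$ identifies $x_i$ and $y_i$, forcing $\bar b$ to separate them; hence $\bar b\models\psi$ in $\bB$. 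For the backward direction, let $\bar b\models\psi$ in $\bB$; by convexity of $\Th(\bA)$, together with the facts that $\phi_{=}$ is satisfiable in $\bA$ and that $\phi_{=}\wedge(x_i\neq y_i)$ is satisfiable in $\bA$ for every $i\in I$, there is a single assignment $\bar a$ into $\bA$ with $\bar a\models\phi_{=}\wedge\bigwedge_{i\in I}(x_i\neq y_i)$; then $(\bar a,\bar b)$ satisfies all equations of $\phi$ on both coordinates, and satisfies $x_i\neq y_i$ through $\bar a$ when $i\in I$ and through $\bar b$ when $i\notin I$, so $(\bar a,\bar b)\models\phi$ in $\bA\times\bB$.

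The step I expect to be the crux is this last invocation of convexity in the backward direction: it is exactly what converts the a priori exponential choice of how to distribute the disequations into the single deterministic rule ``delegate $x_i\neq y_i$ to the $\bA$-side precisely when it is individually consistent with $\phi_{=}$ over $\bA$''. I would therefore apply Proposition~\ref{prop:convex} carefully, explicitly recording the use of the hypothesis $\bA^2\hookrightarrow\bA$, and I would handle the degenerate cases $I=\emptyset$ and $m=0$ directly, where the claim reduces to the satisfiability of $\phi_{=}$ in $\bA$ that is tested at the start.
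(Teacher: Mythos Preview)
Your proof is correct and follows essentially the same approach as the paper's: both use the polynomial-time algorithm for $\Csp(\bA,\neq)$ to determine, for each disequation, whether it is individually consistent with the equations over $\bA$, delegate the remaining disequations to $\bB$, and invoke convexity (Proposition~\ref{prop:convex}) to merge the individually satisfiable $\bA$-disequations into a single $\bA$-solution. Your explicit initial check that $\phi_{=}$ is satisfiable in $\bA$ (and the handling of the degenerate cases $I=\emptyset$, $m=0$) is a small refinement that the paper leaves implicit.
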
 

\begin{proof} Let $\phi$ be a conjunction of atomic $\tau$-formula and consider formula
\[ \Phi:=\phi \wedge x_1\neq y_1 \wedge \cdots \wedge x_m\neq y_m
\] 
with variables $V$, where we again assume without loss of generality that $x_i,y_i\in V$. 
We claim that $\Phi$ is satisfiable in $\bA \times \bB$ if and only if there exists a partition $\{1,2,\dots,m\}=I\cup J$ such that $\Phi_I= \phi \wedge \bigwedge_{i\in I} x_i\neq y_i$  is satisfiable in $\bA$ and $\Phi_J = \phi \wedge \bigwedge_{j\in J} x_i\neq y_i$   is satisfiable in $\bB$. 

Let $f\colon V\rightarrow A\times B$ be an assignment satisfying $\Phi$. Let $f_A\colon V\rightarrow A$ be given by $f_A(v)=a$ if and only if $f(v)=(a,b)$ for some $b\in B$; dually define $f_B$. Hence $f(v)=(f_A(v),f_B(v))$. 
 Then  $f_A$ and $f_B$ satisfies $\phi$, and if $f(x_i) \neq f(y_i)$ then either $f_A(x_i)\neq f_A(y_i)$ or $f_B(x_i)\neq f_B(y_i)$ (or both).   
  Letting $I=\{k \colon f_A(x_i)\neq f_A(y_i)\}$ and $J=\{k\colon f_A(x_i) =  f_A(y_i) \text{ and } f_B(x_i) \neq f_B(y_i)\}$ we obtain desired partition of $\{1,2,\dots,m\}$.  

Conversely, let $V_I$ and $V_J$ be the variables of $\Phi_I$ and $\Phi_J$, respectively. Let $g_A \colon V_I\rightarrow A$ and $g_B\colon V_J\rightarrow B$ be assignments satisfying $\Phi_I$ and $\Phi_J$, respectively. Fix $a\in A$ and $b\in B$. Expand $g_A$ to a map $g'_A \colon V\rightarrow A$ by letting $g'_A(x)=a$ for any $x\in V\setminus V_I$; dually obtain $g'_B$. Then the map $V\rightarrow A\times B$ given by $v\mapsto (g'_A(v),g'_B(v))$ is an assignment satisfying $\Phi$. This finishes the proof of our claim. 

This gives rise to the following  method for determining the satisfiability of $\Phi$. For each $1\leq i \leq m$ we check   (in polynomial-time) if $\Phi_i = \phi \wedge x_i \neq y_i$ is satisfiable in $A$. Let $K$ be the set of $i\in \{1,\dots,m\}$ for which $\Phi_i$ is not satisfied in $A$.  By the claim and Proposition \ref{prop:convex} we have that $\Phi$ is satisfiable in $\bA \times \bB$ if and only if $\phi \wedge \bigwedge_{k\in K} x_k\neq y_k$  is satisfiable in $\bB$. 
\end{proof}

The following lemma shows that the property to have a square embedding implies the existence of a pseudo-Siggers polymorphism in an important situation.

\begin{lemma}\label{lem:square-pseudo-sig}
Let $\bA$ be an $\omega$-categorical algebra.
If 
$\bA^2$ is isomorphic to $\bA$, 
then $\bA$ has a pseudo-Siggers polymorphism. 
\end{lemma}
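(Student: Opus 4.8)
I would prove this by an explicit construction rather than by appealing to Theorem~\ref{thm:BP}. The starting observation is that $\bA^2\cong\bA$ implies $\bA^n\cong\bA$ for every finite $n\ge 1$; in particular $\bA^3\cong\bA$, so I may fix an isomorphism $\theta\colon\bA^3\to\bA$. Since $\bA$ is an algebra, $\theta$ is in particular a homomorphism $\bA^3\to\bA$, hence a ternary polymorphism of $\bA$, and $\theta^{-1}\colon\bA\to\bA^3$ is likewise a homomorphism.

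The key idea is that the $6$-ary Siggers pattern contains enough positions to carry a ternary diagonal. I would set
\[
s(a_1,\dots,a_6):=\theta(a_1,a_5,a_4),
\]
which is the composition of the ternary polymorphism $\theta$ with the projections onto coordinates $1,5,4$, and hence is itself a $6$-ary polymorphism of $\bA$. Substituting the two Siggers tuples yields $s(x,y,x,z,y,z)=\theta(x,y,z)$ and $s(y,x,z,x,z,y)=\theta(y,z,x)$ --- the two evaluations differ only by a cyclic shift of the arguments of $\theta$. Such a shift is an automorphism of $\bA$: letting $c\colon A^3\to A^3$ be the cyclic coordinate permutation $(p,q,r)\mapsto(q,r,p)$, which is an automorphism of $\bA^3$, the map $e_1:=\theta\circ c\circ\theta^{-1}$ is an automorphism, hence an endomorphism, of $\bA$, and it satisfies $e_1(\theta(x,y,z))=\theta(y,z,x)$. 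Taking $e_2:=\mathrm{id}_A$ then gives $e_1\big(s(x,y,x,z,y,z)\big)=e_2\big(s(y,x,z,x,z,y)\big)$, which is exactly the pseudo-Siggers identity; so $(s,e_1,e_2)$ witnesses that $\bA$ has a pseudo-Siggers polymorphism.

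The remaining points are routine. One checks that the coordinate-selection map $(a_1,\dots,a_6)\mapsto(a_1,a_5,a_4)$ is a homomorphism $\bA^6\to\bA^3$ (each output coordinate is a projection), so that $s\in\Pol(\bA)$; and one verifies by inspecting the tuples $(x,y,x,z,y,z)$ and $(y,x,z,x,z,y)$ that the index choice $1,5,4$ indeed sends them to $(x,y,z)$ and its cyclic shift. There is no real obstacle in this proof; the only subtlety worth flagging is that we must use the full hypothesis $\bA^2\cong\bA$ rather than a mere embedding $\bA^2\hookrightarrow\bA$, since it is surjectivity of $\theta$ (equivalently totality of $\theta^{-1}$) that makes $e_1$ a well-defined endomorphism of all of $\bA$ and makes the identity hold on all of $A$. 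Incidentally, $\omega$-categoricity is not needed for this particular lemma.
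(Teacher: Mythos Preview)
Your proof is correct and rests on the same core idea as the paper's: since $\bA^n\cong\bA$, conjugating a coordinate permutation of $A^n$ through the isomorphism yields an automorphism of $\bA$ that witnesses the pseudo-Siggers identity. The difference is only in the implementation. The paper takes $n=6$, uses the isomorphism $g\colon\bA^6\to\bA$ itself as $s$, and conjugates the swap $(a_1,\dots,a_6)\mapsto(a_2,a_1,a_4,a_3,a_6,a_5)$; this makes $s$ injective (indeed an isomorphism). You take $n=3$, observe that coordinates $1,5,4$ of the two Siggers tuples give $(x,y,z)$ and its cyclic shift $(y,z,x)$, and conjugate a $3$-cycle; your $s$ then depends on only three of its six arguments. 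Your version is arguably a bit slicker in that it isolates the hidden ternary cyclic structure of the Siggers identity, while the paper's version has the incidental bonus that $s$ preserves $\neq$. Your remark that $\omega$-categoricity is unused is also correct and applies equally to the paper's argument.
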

\begin{proof}
Clearly there exists an isomorphism $g \colon \bA^6 \to \bA$. Let $\alpha \colon A \to A$ be the map defined as follows.
For $a \in A$, let $(a_1,\dots,a_6) \in A^6$ be such that $g(a_1,\dots,a_6) = a$. Define 
$\alpha(a) := g(a_2,a_1,a_4,a_3,a_6,a_5)$; 
then $\alpha$ is an automorphism of $\bA$, because $g$ is an isomorphism, and for all $x,y,z \in A$ we have 
that $\alpha(g(x,y,x,z,y,z)) = g(y,x,z,x,z,y)$, 
so $g$ is a pseudo-Siggers polymorphism. 
\end{proof}

\subsection{Pseudo-Siggers polymorphisms}
If $\bA$ is an $\omega$-categorical algebra,
then the existence of a pseudo-Siggers
polymorphism of $(\bA,\neq)$ has an interesting consequence, which is in fact equivalent if the algebra $\bA$ is even homogeneous. 

\begin{lemma}\label{lem:ps}
Let $\bA$ be an $\omega$-categorical algebra. If $s \in \Pol^{(6)}(\bA,\neq)$ is a pseudo-Siggers operation then for all $x,y,z,u,v,w \in A$
\begin{align}
& s(x,y,x,z,y,z) = s(u,v,u,w,v,w) \nonumber \\
\Leftrightarrow \quad & s(y,x,z,x,z,y) = s(v,u,w,u,w,v). \label{eq:siggers}
\end{align}
If $\bA$ is homogeneous, the converse implication holds as well. 
\end{lemma}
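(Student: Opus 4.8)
The plan is to prove the two implications of (\ref{eq:siggers}) separately. The forward implication is soft: it uses only that endomorphisms of $(\bA,\neq)$ are injective, and needs neither $\omega$-categoricity nor homogeneity. Abbreviate $P(x,y,z):=s(x,y,x,z,y,z)$ and $Q(x,y,z):=s(y,x,z,x,z,y)$; since $P$ and $Q$ arise from the polymorphism $s$ by identifying coordinates, they are homomorphisms $\bA^3\to\bA$. If $e_1,e_2$ are endomorphisms of $(\bA,\neq)$ with $e_1(P(x,y,z))=e_2(Q(x,y,z))$ for all $x,y,z$, then $e_1$ and $e_2$ are injective (homomorphisms of $(\bA,\neq)$ preserve $\neq$), so $P(x,y,z)=P(u,v,w)$ implies $e_2(Q(x,y,z))=e_1(P(x,y,z))=e_1(P(u,v,w))=e_2(Q(u,v,w))$ and hence $Q(x,y,z)=Q(u,v,w)$; the converse direction is symmetric, exchanging the roles of $e_1$ and $e_2$.

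For the converse of the lemma, assume $\bA$ is homogeneous and $s\in\Pol^{(6)}(\bA,\neq)$ satisfies (\ref{eq:siggers}). Consider the homomorphism $R\colon\bA^3\to\bA^2$, $\bar a\mapsto(P(\bar a),Q(\bar a))$, and let $S$ be its image, a subalgebra of $\bA^2$. By (\ref{eq:siggers}) the two coordinate projections $\pi_1\colon S\to P(\bA^3)$ and $\pi_2\colon S\to Q(\bA^3)$ are bijective, hence isomorphisms (a bijective homomorphism of algebras is an isomorphism). Therefore $\phi:=\pi_2\circ\pi_1^{-1}$ is an isomorphism between the subalgebras $P(\bA^3)$ and $Q(\bA^3)$ of $\bA$, and $\phi(P(\bar a))=Q(\bar a)$ for all $\bar a\in A^3$.

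The main step is to glue two copies of $\bA$ along $\phi$ and then reabsorb the result into $\bA$. In the signature of $\bA$ enriched by two families of new constants $(c_a)_{a\in A}$ and $(d_a)_{a\in A}$, let $\Sigma$ consist of the universal theory $\Th_{\forall}(\bA)$, the diagram of $\bA$ in the constants $c_a$ (all true atomic sentences, together with $c_a\neq c_{a'}$ for $a\neq a'$), the analogous diagram in the constants $d_a$, and all sentences $c_{P(\bar a)}=d_{Q(\bar a)}$ for $\bar a\in A^3$. Any finite subset of $\Sigma$ mentions $c_a$ only for $a$ in a finite set $S_1$, $d_a$ only for $a$ in a finite set $S_2$, identifications only for $\bar a$ in a finite set $T$ (which we may take nonempty), and finitely many universal sentences of $\Th(\bA)$; such a finite subset is satisfied by an amalgam $H$, in $\Age(\bA)$, of the finite subalgebras $\langle S_1\cup P(T)\rangle$ and $\langle S_2\cup Q(T)\rangle$ of $\bA$ over their common isomorphic finite subalgebras $\langle P(T)\rangle$ and $\langle Q(T)\rangle=\phi(\langle P(T)\rangle)$ — this amalgam exists because $\Age(\bA)$ has the amalgamation property, and since $H$ embeds into $\bA$ it models $\Th_{\forall}(\bA)$. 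By compactness $\Sigma$ has a model; passing to the substructure generated by the interpretations of all the $c_a$ and $d_a$, we obtain a countable structure $\bD$ with $\Age(\bD)\subseteq\Age(\bA)$ (this uses $\bD\models\Th_{\forall}(\bA)$ and the $\omega$-categoricity of $\bA$) together with embeddings $j_1,j_2\colon\bA\to\bD$ such that $j_1(P(\bar a))=j_2(Q(\bar a))$ for all $\bar a\in A^3$.

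Since $\bA$ is homogeneous it is universal for $\Age(\bA)$, so $\bD$ embeds into $\bA$ via some $j$, obtained by a routine back-and-forth in which finite partial isomorphisms are extended using automorphisms of $\bA$. Then $e_1:=j\circ j_1$ and $e_2:=j\circ j_2$ are self-embeddings of $\bA$, hence endomorphisms of $(\bA,\neq)$, and for all $x,y,z$ we get $e_1(s(x,y,x,z,y,z))=j(j_1(P(x,y,z)))=j(j_2(Q(x,y,z)))=e_2(s(y,x,z,x,z,y))$, so $s$ is a pseudo-Siggers polymorphism of $(\bA,\neq)$. The step I expect to require the most care is the amalgamation argument: one must remember to put $\Th_{\forall}(\bA)$ into $\Sigma$ and to check that the finite amalgams witnessing finite satisfiability really do lie in $\Age(\bA)$, since otherwise the glued structure $\bD$ would not embed back into $\bA$.
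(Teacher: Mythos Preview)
Your forward direction is exactly the paper's argument. For the converse, your proof is correct but proceeds along a genuinely different line from the paper. The paper invokes the lift lemma from~\cite{canonical}: it suffices to find, for each finite $F\subseteq A$, an automorphism $\alpha\in\Aut(\bA)$ with $s(x,y,x,z,y,z)=\alpha\, s(y,x,z,x,z,y)$ on $F$; by homogeneity this reduces to checking that the tuples $s(a_1,a_2,a_1,a_3,a_2,a_3)$ and $s(a_2,a_1,a_3,a_1,a_3,a_2)$ (for $a_1,a_2,a_3\in F^k$) satisfy the same atomic formulas, which the paper verifies directly using that $s$ commutes with term operations together with~(\ref{eq:siggers}). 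Your route instead packages~(\ref{eq:siggers}) as a global isomorphism $\phi\colon P(\bA^3)\to Q(\bA^3)$ between subalgebras, then builds $e_1,e_2$ in one shot via amalgamation, compactness, and universality of the Fra\"{i}ss\'e limit. The paper's argument is shorter once the lift lemma is available and makes the local automorphism picture explicit; your argument is self-contained, uses only standard model-theoretic machinery, and isolates cleanly the structural content of~(\ref{eq:siggers}) (the existence of $\phi$). Two minor points worth making explicit in your write-up: that $\bA$ is countable (implicit in $\omega$-categoricity) so that it is the Fra\"{i}ss\'e limit of its age and hence universal, and that finitely generated substructures of $\bA$ are finite (uniform local finiteness), which is what makes the finite amalgams in your compactness argument genuinely lie in $\Age(\bA)$.
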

\begin{proof}
Let $s$ be a pseudo-Siggers operation, i.e., there are $e_1,e_2 \in \End(\bA,\neq)$ such that  $e_1(s(x,y,x,z,y,z)) = e_2(s(y,x,z,x,z,y))$. Now observe that 
\begin{align*}
& s(x,y,x,z,y,z) = s(u,v,u,w,v,w) \\
\Leftrightarrow \; & e_1(s(x,y,x,z,y,z)) = e_1(s(u,v,u,w,v,w)) && \text{(since $e_1$ is injective)} \\
\Leftrightarrow \; & e_2(s(y,x,z,x,z,y)) = e_2(s(v,u,w,u,w,v)) && \text{(by assumption)} \\
\Leftrightarrow \; & s(y,x,z,x,z,y) = s(v,u,w,u,w,v) && \text{(since $e_2$ is injective)}. 
\end{align*}

Conversely, suppose that $s$ satisfies~(\ref{eq:siggers}). By the lift lemma (Lemma~3 in~\cite{canonical}) it suffices to show that for
every finite $F \subseteq A$ there exists
$\alpha \in \Aut(\bA)$ such that $s(x,y,x,z,y,z) = \alpha s(y,x,z,x,z,y)$ for all $x,y,z \in F$. 
Since $\bA$ is homogeneous, 
it suffices to verify that for every $k \in {\omega}$ and $a_1,a_2,a_3 \in F^k$ 
the $k$-tuples $s(a_1,a_2,a_1,a_3,a_2,a_3)$
and $s(a_2,a_1,a_3,a_1,a_3,a_2)$ satisfy
the same atomic formulas in the language of $(\bA,\neq)$.
 So let $r,t$ be terms in the language of $\bA$ such that $r(s(a_1,a_2,a_1,a_3,a_2,a_3)) = t(s(a_1,a_2,a_1,a_3,a_2,a_3))$. 
Then \begin{align*}
s(r(a_1),r(a_2),r(a_1),r(a_3),r(a_2),r(a_3)) & = r(s(a_1,a_2,a_1,a_3,a_2,a_3)) \\
& = t(s(a_1,a_2,a_1,a_3,a_2,a_3)) \\
& = s(t(a_1),t(a_2),t(a_1),t(a_3),t(a_2),t(a_3))
\end{align*} and therefore
the assumption implies that 
$$s(r(a_2),r(a_1),r(a_3),r(a_1),r(a_3),r(a_2))
=s(t(a_2),t(a_1),t(a_3),t(a_1),t(a_3),t(a_2)),$$ which in
turn implies that $r(s(a_2,a_1,a_3,a_1,a_3,a_2))=t(s(a_2,a_1,a_3,a_1,a_3,a_2))$. Symmetrically, one can show that every atomic formula that
holds on $s(a_2,a_1,a_3,a_1,a_3,a_2)$ also holds 
on $s(a_1,a_2,a_1,a_3,a_2,a_3)$. 
\end{proof}

We would like to point out that in later sections,
whenever we use the assumption that $\bA$
has a pseudo-Siggers polymorphism, we do this by using Property~(\ref{eq:siggers}), and 
we are not aware of an $\omega$-categorical algebra where the converse of Lemma~\ref{lem:ps} does not hold. 

\section{Monoids}
\label{sect:monoids}
Let $\bM = (M;\cdot,1)$ be a monoid.
Polymorphisms $f \colon M^n\rightarrow M$ of $\bM$ have the particularly pleasing property that they decompose in the following sense: for any $x_1,\dots,x_n\in M$ we have 
\begin{align}
f(x_1,x_2,\dots,x_n)=f(x_1,1,\dots,1) \cdot f(1,x_2,1,\dots,1) \cdots f(1,1,\dots,1,x_n). \label{eq:decomp}
\end{align}
For $I \subseteq \{1,\dots,n\}$ we
write $x^{(n)}_I$ for the $n$-tuple 
whose $i$-th component is $x$ if $i \in I$
and $1$ otherwise. 

\begin{definition} 
Let $f \in \Pol^{(n)}(\bM)$. 
Let $f_I \colon M \to M$ be the operation given by $$f_I(x) := f(x^{(n)}_I).$$
\end{definition}


\begin{remark}
The unary constant operation $x \mapsto 1$ 
is an endomorphism of every monoid $\bM$. It follows that for every $I \subseteq \{1,\dots,n\}$
and every $f \in \Pol^{(n)}(\bM)$ 
the operation $f_I$ is an endomorphism of $\bM$. Every polymorphism of $(\bM,\neq)$ must preserve $M \setminus \{1\}$. 
\end{remark}

Note that $f_I$ is not necessarily a 
self-embedding of $(M;\cdot,1)$. For example, the projection $f(x,y)=x$ is a polymorphism of $(M;\cdot,1)$, and $f_{\{2\}}$ is constant. However, there must be a subset $I$ of $\{1,\dots,n\}$ such that $f_I$ is an embedding. 

\begin{proposition}\label{prop:monoid-decomp}
Let $(M;\cdot,1,\neq)$ be a monoid and $f \colon M^n \to M$ a polymorphism of $(M;\cdot,1)$. 
Then for any partition $I_1 \cup I_2 \cup \cdots \cup I_k$ of $\{1,\dots,n\}$ there exists $j \in \{1,\dots,k\}$ such that $f_{I_j}$ is a self-embedding of $(M;\cdot,1)$. 
\end{proposition}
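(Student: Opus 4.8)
The plan is to lift the coordinatewise decomposition (\ref{eq:decomp}) of a monoid polymorphism to the level of the blocks $I_1,\dots,I_k$. I would begin by recording that, since $1$ is a constant of the signature, $f$ preserves it, so $f(1,\dots,1)=1$; by the Remark above, each $f_{I_j}$ is an endomorphism of $\bM$, so that ``$f_{I_j}$ is a self-embedding'' just means ``$f_{I_j}$ is injective'', and it is enough to reach a contradiction from the assumption that none of $f_{I_1},\dots,f_{I_k}$ is injective.

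Next I would set up the block decomposition. For $j\in\{1,\dots,k\}$ and $\bar y\in M^{I_j}$, let $[\bar y]_j\in M^n$ be the tuple equal to $\bar y$ on the coordinates in $I_j$ and equal to $1$ elsewhere, and put $\hat f_j(\bar y):=f([\bar y]_j)$; since $[\cdot]_j$ and $f$ are monoid homomorphisms, so is $\hat f_j\colon M^{I_j}\to M$. The crucial identity --- and the one genuine idea of the proof --- is
\[
f(\bar x)=\hat f_1(\bar x^{(1)})\cdot \hat f_2(\bar x^{(2)})\cdots \hat f_k(\bar x^{(k)})\qquad(\bar x\in M^n),
\]
where $\bar x^{(j)}$ is the restriction of $\bar x$ to $I_j$; this holds because $\bar x=[\bar x^{(1)}]_1\cdot[\bar x^{(2)}]_2\cdots[\bar x^{(k)}]_k$ in $M^n$ (at each coordinate all but one factor equals $1$) and $f$ is a homomorphism. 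Plugging in $\bar x=x^{(n)}_{I_j}$ and using $\hat f_{j'}(1,\dots,1)=f(1,\dots,1)=1$ for $j'\ne j$ then gives $f_{I_j}(x)=\hat f_j(x,\dots,x)$.

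Finally, I would run the contradiction: assuming no $f_{I_j}$ is injective, choose for each $j$ a pair $a_j\ne b_j$ in $M$ with $f_{I_j}(a_j)=f_{I_j}(b_j)$, i.e.\ $\hat f_j(a_j,\dots,a_j)=\hat f_j(b_j,\dots,b_j)$, and define $\bar u,\bar v\in M^n$ by $u_i:=a_j$, $v_i:=b_j$ for $i\in I_j$. Then $u_i\ne v_i$ for every $i$, while the block decomposition yields $f(\bar u)=\prod_{j=1}^{k}\hat f_j(a_j,\dots,a_j)=\prod_{j=1}^{k}\hat f_j(b_j,\dots,b_j)=f(\bar v)$, contradicting that $f$ preserves $\neq$, and we conclude that some $f_{I_j}$ is a self-embedding. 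I expect no serious obstacle here; the one point that needs care is precisely this last step, since the statement genuinely uses that $f$ preserves $\neq$ (it fails for arbitrary polymorphisms of $(M;\cdot,1)$ --- e.g.\ for the constant polymorphism $\bar x\mapsto 1$ on the two-element semilattice monoid), so the hypothesis is to be read as $f\in\Pol(\bM,\neq)$, which is the relevant case throughout (cf.\ the Remark that every polymorphism of $(\bM,\neq)$ preserves $M\setminus\{1\}$).
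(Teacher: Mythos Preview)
Your proof is correct and follows essentially the same route as the paper: assume no $f_{I_j}$ is injective, pick witnesses $a_j\neq b_j$ with $f_{I_j}(a_j)=f_{I_j}(b_j)$, build the constant-on-blocks tuples $\bar u,\bar v$, and use the block decomposition $f(\bar x)=\prod_j f_{I_j}(x_j)$ to conclude $f(\bar u)=f(\bar v)$, contradicting preservation of~$\neq$. Your remark that the hypothesis must be read as $f\in\Pol(M;\cdot,1,\neq)$ is well taken---the paper's proof tacitly uses this as well, and the statement as written (``polymorphism of $(M;\cdot,1)$'') is a slip.
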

\begin{proof}
Let $I_1 \cup I_2 \cup \cdots \cup I_k$ be a partition of $\{1,\dots,n\}$. Suppose for contradiction that 
for every $j \in \{1,\dots,k\}$ there exist
distinct $x_j,y_j \in M$ 
such that $f_{I_j}(x_j) = f_{I_j}(y_j)$.
Let $\bar x$ be the $n$-tuple such that 
$\bar x_i = x_j$ if $i \in I_j$,
and similarly let $\bar y$ be the $n$-tuple such that 
$\bar y_i = y_j$ if $i \in I_j$. 
Then 
\begin{align*}
f(\bar x) = f( (x_1)^{(n)}_{I_1} 
\cdots (x_k)^{(n)}_{I_k}) & =
f((x_1)^{(n)}_{I_1}) 
\cdots f((x_k)^{(n)}_{I_k}) \\
& = f_{I_1}(x_1) \cdots f_{I_k}(x_k) \\
& = f_{I_1}(y_1) \cdots f_{I_k}(y_k)
= f( (y_1)^{(n)}_{I_1} 
\cdots (y_k)^{(n)}_{I_k}) = f(\bar y)
\end{align*}
showing that $f$ does not preserve $\neq$,
a contradiction. 
\end{proof}

\begin{proposition}\label{prop:monoid-pseudo-sig}
A monoid $(M;\cdot,1,\neq)$
has a pseudo-Siggers polymorphism 
 if and only if there are a polymorphism 
 $s \colon M^6 \to M$ and self-embeddings $a,b$ of $(M;\cdot,1)$ such that 
$$a(s_{\{i,j\}}(x)) = b(s_{\{i,j\}}(x))$$
for all $x \in M$ 
and $\{i,j\} \in \big \{ \{1,3\}, \{2,5\}, \{4,6\} \big \}$.
\end{proposition}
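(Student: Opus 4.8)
The plan is to expand both sides of the pseudo-Siggers identity by means of the decomposition~\eqref{eq:decomp} and then read off the equivalence. Write $\sigma_1 := (x,y,x,z,y,z)$ and $\sigma_2 := (y,x,z,x,z,y)$ for the two Siggers tuples, so that a polymorphism $s$ of $(M;\cdot,1,\neq)$ is pseudo-Siggers with witnesses $e_1,e_2 \in \End(M;\cdot,1,\neq)$ precisely when $e_1(s(\sigma_1)) = e_2(s(\sigma_2))$ for all $x,y,z \in M$. The first thing I would record is the factorisation of these tuples in $M^6$: one has $\sigma_1 = x^{(6)}_{\{1,3\}}\cdot y^{(6)}_{\{2,5\}}\cdot z^{(6)}_{\{4,6\}}$ and $\sigma_2 = x^{(6)}_{\{2,4\}}\cdot y^{(6)}_{\{1,6\}}\cdot z^{(6)}_{\{3,5\}}$, where in each product the three factors have pairwise disjoint support and hence pairwise commute (so the product does not depend on the order of its factors). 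Applying the homomorphism $s$ and using that it maps commuting tuples to commuting elements gives $s(\sigma_1) = s_{\{1,3\}}(x)\,s_{\{2,5\}}(y)\,s_{\{4,6\}}(z)$ and $s(\sigma_2) = s_{\{2,4\}}(x)\,s_{\{1,6\}}(y)\,s_{\{3,5\}}(z)$, again with pairwise commuting factors. Thus the minors of $s$ governing $s(\sigma_1)$ are $s_{\{1,3\}},s_{\{2,5\}},s_{\{4,6\}}$ and those governing $s(\sigma_2)$ are the complementary minors $s_{\{2,4\}},s_{\{1,6\}},s_{\{3,5\}}$: in each of the pairs $\{1,3\}/\{2,4\}$, $\{2,5\}/\{1,6\}$, $\{4,6\}/\{3,5\}$ the two minors depend on the same Siggers variable, and it is these pairings that enter the statement.

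For the forward direction I would take a pseudo-Siggers polymorphism $s$ with witnesses $e_1,e_2$, substitute the two expansions into $e_1(s(\sigma_1)) = e_2(s(\sigma_2))$, distribute $e_1$ and $e_2$ over the products (they are monoid homomorphisms), and then specialise two of the three variables $x,y,z$ to $1$ at a time. Since each minor $s_I$ is an endomorphism of $(M;\cdot,1)$ and hence fixes $1$, the three specialisations collapse respectively to $e_1(s_{\{1,3\}}(x)) = e_2(s_{\{2,4\}}(x))$, $e_1(s_{\{2,5\}}(x)) = e_2(s_{\{1,6\}}(x))$ and $e_1(s_{\{4,6\}}(x)) = e_2(s_{\{3,5\}}(x))$ for all $x$; taking $a := e_1$ and $b := e_2$ gives what is wanted.

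For the converse I would take a polymorphism $s$ of $(M;\cdot,1,\neq)$ and self-embeddings $a,b$ of $(M;\cdot,1)$ satisfying $a(s_{\{1,3\}}(x)) = b(s_{\{2,4\}}(x))$, $a(s_{\{2,5\}}(x)) = b(s_{\{1,6\}}(x))$, $a(s_{\{4,6\}}(x)) = b(s_{\{3,5\}}(x))$, apply $a$ to the expansion of $s(\sigma_1)$, distribute over the product, rewrite each factor using these identities, and recognise the resulting product $b(s_{\{2,4\}}(x))\,b(s_{\{1,6\}}(y))\,b(s_{\{3,5\}}(z))$ as $b(s(\sigma_2))$ (the commutativity of the factors makes the order in which $s(\sigma_2)$ was expanded irrelevant). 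This yields $a(s(\sigma_1)) = b(s(\sigma_2))$ for all $x,y,z$; since $s$ preserves $\neq$ and $a,b$ are endomorphisms of $(M;\cdot,1,\neq)$, the triple $(s,a,b)$ is a pseudo-Siggers polymorphism of $(M;\cdot,1,\neq)$.

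The only step that is more than bookkeeping with~\eqref{eq:decomp} is the pairwise commutativity of the three factors in each expanded Siggers term: it is exactly what allows the products to be treated as unordered, and hence $b(s(\sigma_2))$ to be reassembled in the converse direction, and it is immediate because the supports of $x^{(6)}_I$, $y^{(6)}_J$, $z^{(6)}_K$ are pairwise disjoint for pairwise disjoint $I,J,K$. I would also note---not needed for the equivalence itself, but convenient in later applications---that by Proposition~\ref{prop:monoid-decomp}, applied to the two partitions $\{1,3\}\cup\{2,5\}\cup\{4,6\}$ and $\{2,4\}\cup\{1,6\}\cup\{3,5\}$, at least one minor in each of the two relevant triples is automatically a self-embedding of $(M;\cdot,1)$.
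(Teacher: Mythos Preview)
Your approach is essentially the paper's: both sides expand $s(\sigma_1)$ and $s(\sigma_2)$ via the decomposition~\eqref{eq:decomp} and read off the equivalence. The paper's proof is a single displayed line expanding $a(s(x,y,x,z,y,z))$ and the phrase ``to which the result follows''; you have filled in both directions, including the specialisation argument and the commutativity of the disjoint-support factors that makes the reassembly in the converse direction legitimate.

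One point worth flagging explicitly: the condition you actually prove is
\[
a\!\circ s_{\{1,3\}} = b\!\circ s_{\{2,4\}},\qquad
a\!\circ s_{\{2,5\}} = b\!\circ s_{\{1,6\}},\qquad
a\!\circ s_{\{4,6\}} = b\!\circ s_{\{3,5\}},
\]
i.e.\ the three minors from $\sigma_1$ against the complementary minors from $\sigma_2$. This differs from the statement as printed, which has the \emph{same} index pair $\{i,j\}$ on both sides; that literal condition only says that $a$ and $b$ agree on certain images and is not equivalent to pseudo-Siggers. Your reading is the mathematically correct one and is also the form in which the proposition is applied later (e.g.\ in the proof of Theorem~\ref{thm:group-pseudo-sig}, where one verifies $\alpha f_{\{1,3\}} = f_{\{2,4\}}$); the printed statement appears to contain a typo.
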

\begin{proof} 
For any
$x,y,z \in M$ we have 
$$a(s(x,y,x,z,y,z)) 
= a(s_{\{1,3\}}(x) s_{\{ 2,5\} }(y) s_{ \{4,6\} }(z)) 
= a(s_{\{1,3\}}(x)) a(s_{\{ 2,5\} }(y)) a(s_{ \{4,6\} }(z))$$
to which the result follows. 
\end{proof}


\section{Groups}
\label{sect:groups}
Let $\bG$ be an $\omega$-categorical group. 
There are homogeneous 
$\omega$-categorical groups such that 
$\Csp(\bG,\neq)$ is undecidable (Section~\ref{sect:undec}). 
However, we are able to classify the complexity
of $\Csp(\bG,\neq)$ 
if $\bG$ is additionally abelian; in this case,
$\Csp(\bG,\neq)$ is in P or NP-complete (Section~\ref{sect:abelian}). 
Some of the structural results we 
obtain not only hold for abelian groups, 
but for general groups with a pseudo-Siggers polymorphism, and they will be presented in Section~\ref{sect:pseudo-sig-group}. 

The \emph{order} of an element $g \in G$
is the cardinality of the subgroup of $\bG$ 
generated by $g$. 
We say that $\bG$ is a \emph{torsion group} if every element of $\bG$ is of finite order. Since an $\omega$-categorical group is uniformly locally finite, it must be of   \emph{finite exponent}~\cite{Rosenstein73}, i.e., there exists $n \in \omega$ such that $g^n = 1$ for every $g \in G$; the minimum such $n$ is called the \emph{exponent} of $\bG$. This follows from the well-known fact that $\omega$-categorical 
structures $\bB$ 
are uniformly locally finite (Corollary~7.3.2 in~\cite{HodgesLong}), that is, there exists a function $f \colon \omega \to \omega$
such that for every $n \in \omega$ each substructure of $\bB$ generated by $n$ elements 
has at most $f(n)$ elements. In particular,
every $\omega$-categorical group must be a torsion group. 

\begin{remark}
Note that the identity element $1$ of $\bG$ has the quantifier-free definition $x \cdot x = x$ (in the language $\{\cdot\}$ of semigroups), and we may therefore assume that there is a constant symbol for $1$ in the signature. Similarly, the inverse function has a quantifier-free definition, 
and we assume that the signature contains a
unary function symbol for taking inverses. 
\end{remark}

\ignore{
\subsection{Square Embeddings}
We first characterise those groups that have a square embedding. 

\begin{theorem}
Let $\bG$ be an $\omega$-categorical group. Then the following are equivalent:
\begin{enumerate}
\item $(\bG,\neq)$ has a  binary pseudo-symmetric polymorphism. 
\item $\bG^2 \hookrightarrow \bG$.
\end{enumerate}
\end{theorem}
\begin{proof}
To prove the implication $(2) \Rightarrow (1)$, let $ \colon \bG^2 \hookrightarrow \bG$ be an embedding. By the lift lemma (Lemma~3 in~\cite{canonical})
it suffices to show
that for every finite $F \subseteq G$ there exists $\alpha \in \Aut(\bG)$
such that $e(x,y) = \alpha e(y,x)$ 
for all $x,y \in F$. IS IT TRUE? 

For $(2) \Rightarrow (1)$ 
let $f \in \Pol(\bG)$ and $a,b \in \End(\bG)$ be such that
$a(f(x,y)) = b(f(y,x))$ for all $x,y \in G$. 
Note that the unary operation that is constant 1 is an endomorphism of $\bG$,
and hence
the function $e_1$ given by 
$x \mapsto f(x,1)$ is an endomorphism of $\bG$. 
Now suppose that 
$e_2(x) = 1$ for some $x$. 
Then $$1 = a(1) = a(e_2(x)) = a(f(1,x))
= b(f(x,1)) = f(x,1)$$ since $a$ and $b$ must preserve 1.
Hence, $f(x,1) = 1$, and so $x=1$.
Hence, the kernel of $e_2$ must be $\{1\}$ and the claim follows for $e_2$. 
The argument for $e_1$ is symmetric.

Suppose now that there exist 
$x,y$ such that $f(x,1) = f(y,1)$.
Then either $x=1$ or $y=1$ since $f$ preserves $\neq$.
So $\Im(e_1) \cap \Im(e_2) = \{1\}$.
Moreover, the subgroups of 
$\bG$ generated by $\Im(e_1)$ and $\Im(e_2)$ are commuting:
for all $x,y \in G$, 
$$e_1(x) e_2(y) = f(x,1) f(1,y) = f(x,y) = f(1,y) f(x,1) =  e_2(y) e_1(x).$$
Hence, the subgroup of $\Im(f)$ generated by $\Im(e_1) \cup \Im(e_2)$
 is isomorphic to $\bG \times \bG$ and the statement follows.
\end{proof} 
}

\subsection{Pseudo-Siggers Groups}
\label{sect:pseudo-sig-group}
An \emph{involution} is an element of $\bG$ of order 2. 
An element $x \in G$ is \emph{central} if 
$x g = g x$ for all $g \in G$. Clearly, 
an involution generates a normal subgroup if
and only if it is central. 
Since every group is in particular a monoid, we may use Proposition~\ref{prop:monoid-decomp} and obtain the following. 

\begin{proposition}\label{prop:group-involution}
Let $\bG$ be an $\omega$-categorical group such that $(\bG,\neq)$ has a pseudo-Siggers polymorphism
$f$. 
Then 
\begin{itemize}
\item $\bG^2 \hookrightarrow \bG$, or 	
\item There is a central involution $i \in G$
such that $\bG \times \bG/\langle i \rangle \hookrightarrow \bG$. 
\end{itemize}
\end{proposition}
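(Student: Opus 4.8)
The plan is to apply Proposition~\ref{prop:monoid-pseudo-sig} to extract self-embeddings $a,b$ of $\bG$ and a polymorphism $s\colon G^6\to G$ such that the three unary endomorphisms $g_{13}:=s_{\{1,3\}}$, $g_{25}:=s_{\{2,5\}}$, $g_{46}:=s_{\{4,6\}}$ satisfy $a\circ g_{ij}=b\circ g_{ij}$ on all of $G$. First I would use Proposition~\ref{prop:monoid-decomp}, applied to the partition $\{1,3\}\cup\{2,5\}\cup\{4,6\}$ of $\{1,\dots,6\}$, to find that at least one of these three maps — say $g_{13}$, after relabelling — is in fact a self-embedding of $\bG$. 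The decomposition identity~(\ref{eq:decomp}) for monoids gives $s(x,y,x,z,y,z)=g_{13}(x)\,g_{25}(y)\,g_{46}(z)$, and since $s$ preserves $\neq$ the images of any two of these endomorphisms intersect trivially (an equality $g_{13}(x)=g_{46}(z)$ with $x,z$ witnessing that the other two coordinates are $1$ would force $s$ to collapse a $\neq$ pair, exactly as in the proof of Proposition~\ref{prop:monoid-decomp}); likewise, evaluating $s$ on product tuples shows the images of $g_{13}$, $g_{25}$, $g_{46}$ pairwise commute elementwise.

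Next I would combine these facts with the relation $a\circ g_{13}=b\circ g_{13}$. Since $a,b$ are self-embeddings of $\bG$ they are injective group homomorphisms, so $b^{-1}\circ a$ is a well-defined injective endomorphism on $\Im(a)\supseteq \Im(a g_{13})$, and $g_{13}$ is $(b^{-1}a)$-invariant; the point is to use this to compare the kernels and images of the three maps. The key dichotomy arises as follows. If all three of $g_{13},g_{25},g_{46}$ are self-embeddings, then since their images pairwise commute and pairwise intersect trivially, the subgroup of $\bG$ generated by $\Im(g_{13})\cup\Im(g_{25})\cup\Im(g_{46})$ is the internal direct product $\Im(g_{13})\times\Im(g_{25})\times\Im(g_{46})\cong\bG^3$, which already gives $\bG^2\hookrightarrow\bG$, the first alternative. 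Otherwise at least one of the three, say $g_{25}$, is not injective; then I would analyse its kernel $N:=\ker g_{25}$, a normal subgroup of $\bG$. The plan is to show $N$ must be a two-element central subgroup $\langle i\rangle$ generated by an involution $i$, and that $g_{13}$ (which \emph{is} a self-embedding, from the paragraph above) together with the induced embedding of $\bG/\langle i\rangle$ coming from $g_{25}$ assemble — using the trivial intersection and commuting of images — into an embedding $\bG\times\bG/\langle i\rangle\hookrightarrow\bG$.

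The main obstacle I expect is pinning down that the kernel of a non-injective $g_{25}$ is exactly an order-two central subgroup, rather than something larger. Here is where the $\neq$-preservation of $s$ and the identity $a g_{25}=b g_{25}$ must be used more carefully than a single collapse argument: preservation of $\neq$ by $s$ constrains how large the collapse of $g_{25}$ can be relative to the \emph{injective} factor $g_{13}$ sharing the same tuple, and comparing $g_{25}$ to the other coordinates of $s$ evaluated on cleverly chosen tuples (with two blocks set to $1$) should force $|G/\Im g_{25}|$ and $|\ker g_{25}|$ to be tightly linked — ultimately that $\ker g_{25}$ has order $2$. That $N$ is central follows since $N$ is normal and, being of order $2$, its nontrivial element is fixed by all conjugations, i.e.\ central (as noted in the text: an involution generates a normal subgroup iff it is central). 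Once $N=\langle i\rangle$ is established, the construction of the embedding $\bG\times\bG/\langle i\rangle\hookrightarrow\bG$ is the routine internal-direct-product argument already used in the relational case. I would organise the writeup so that the generic commuting/trivial-intersection lemma is proved once and then invoked in both branches.
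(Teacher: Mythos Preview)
Your plan has a genuine gap at the point where you claim the images of $g_{13},g_{25},g_{46}$ pairwise intersect trivially ``exactly as in the proof of Proposition~\ref{prop:monoid-decomp}.'' That proof combines non-injectivity witnesses across \emph{all} blocks of a partition to produce two tuples differing in every coordinate; it says nothing about intersections of images. Concretely, $g_{13}(x)=g_{46}(z)$ means $s(x,1,x,1,1,1)=s(1,1,1,z,1,z)$, and these tuples agree in positions $2$ and $5$, so $\neq$-preservation gives no contradiction. The same obstruction reappears when you try to bound $\ker g_{25}$: from $g_{25}(i)=1$ one can apply~(\ref{eq:siggers}) (the tuple $(1,i,1,1,i,1)$ has Siggers shape) to obtain $s_{\{1,6\}}(i)=1$, but multiplying only yields $s(i,i,1,1,i,i)=1=s(1,\dots,1)$, and positions $3,4$ still agree, so you cannot force $i^2=1$. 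Your acknowledged ``main obstacle'' is thus not a matter of care in the write-up; with the three-way split $\{1,3\},\{2,5\},\{4,6\}$ the argument genuinely does not close.

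The paper's remedy is to abandon the three-way split and pair $f_{\{1,3\}}$ with its \emph{complement} $f_{\{2,4,5,6\}}$, setting $g(x,y):=f(x,y,x,y,y,y)$. Now $g(x,1)=g(1,y)$ gives two tuples differing in all six coordinates whenever $x,y\neq 1$, so $\neq$-preservation yields $\Im f_{\{1,3\}}\cap\Im f_{\{2,4,5,6\}}=\{1\}$ and hence $\Im g\cong \bG\times\Im f_{\{2,4,5,6\}}$. If $f_{\{2,4,5,6\}}$ is injective you are done; otherwise $f_{\{2,4,5,6\}}(i)=1$ for some $i\neq 1$, and the tuple $(1,i,1,i,i,i)$ is again of Siggers shape, so~(\ref{eq:siggers}) gives $f(i,1,i,1,i,i)=1$. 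Multiplying, $f(i,i,i,i,i^2,i^2)=1$, and now \emph{every} coordinate is nontrivial unless $i^2=1$, forcing $i$ to be an involution; a second application of $\neq$-preservation shows the kernel is exactly $\{1,i\}$. The single idea you are missing is to take complementary index sets so that every position is live in the comparisons.
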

\begin{proof}
By Proposition~\ref{prop:monoid-decomp}
we may assume without loss of generality 
that $f_{\{1,3\}}$ is an embedding. 
Let $g$ be the binary polymorphism of 
$(\bG,\neq)$ given by 
$$g(x,y) := f_{\{1,3\}}(x) \cdot f_{\{2,4,5,6\}}(y) = f(x,1,x,1,1,1) \cdot f(1,y,1,y,y,y) = f(x,y,x,y,y,y).$$
{\bf Claim.} $\Im(g)$ is bi-embeddable with 
$\bG \times \Im(f_{\{2,4,5,6\}})$. 
Suppose that there exist $x,y \in G$ such that $g(x,1) = g(1,y)$. Then $x = 1$ or $y =1$ since $g$ preserves $\neq$. 
So $\Im(f_{\{1,3\}}) \cap \Im(f_{\{2,4,5,6\}}) = \{1\}$. Moreover, the subgroups 
$\Im(f_{\{1,3\}})$ and $\Im(f_{\{2,4,5,6\}})$ of $\bG$ are commuting, since for all $x,y \in G$
$$f_{\{1,3\}}(x)f_{\{2,4,5,6\}}(y) = f(x,y,x,y,y,y)=f_{\{2,4,5,6\}}(y) f_{\{1,3\}}(x).$$
It follows that $\Im(g)$ is generated by $\Im(f_{\{1,3\}}) \cup \Im(f_{\{2,4,5,6\}})$, and 
bi-embeddable with 
$\bG \times \Im(f_{\{2,4,5,6\}})$, as required. 

If $f_{\{2,4,5,6\}}$ is an embedding, then 
$g \colon \bG^2 \hookrightarrow \bG$ and we are done, so suppose that this is not the case. 
Then there exists an $i \in G \setminus \{1\}$ such that $f_{\{2,4,5,6\}}(i) = 1$. 
So we have $f(1,i,1,i,i,i) = f(1,1,1,1,1,1)$
and Equation~\ref{eq:siggers} implies that
$f(i,1,i,1,i,i) = f(1,1,1,1,1,1)$.
Hence, 
$f(i,i,i,i,i^2,i^2) = 1$, and we must have
$i^2=1$ because otherwise $f$ would 
not preserve $G \setminus \{1\}$. 
Note also that if $x \in G$ is such that 
$f(1,x,1,x,x,x,x) = 1$ then $x=i$ or $x=1$:
otherwise, $f(1,x,1,x,x,x,x)=1=f(i,1,i,1,i,i)$
in contradiction to the assumption that 
$f$ preserves $\neq$. So the kernel 
of $f_{\{2,4,5,6\}}$ is $\{1,i\} = \langle i \rangle$ and $(x,y \langle i \rangle) \mapsto g(x,y)$ is an 
embedding $\bG \times \bG/ \langle i \rangle \hookrightarrow \bG$. 
\end{proof}





Let $\bH$ and $\bK$ be $\omega$-categorical 
groups. We say that $\bH$ and $\bK$ are \emph{of relatively prime exponent} if the exponents of $\bH$ 
and $\bK$ are co-prime.

\begin{lemma}\label{lem:group-product}
Let $\bH$ and $\bK$ be $\omega$-categorical 
groups of relatively prime exponent. If the structure 
$(\bH \times \bK,\neq)$ has a pseudo-Siggers polymorphism, then 
$(\bH,\neq)$ and $(\bK,\neq)$ have pseudo-Siggers polymorphisms, too. 
\end{lemma}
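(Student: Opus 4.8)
The plan is to realise $\bH$ and $\bK$ as characteristic subgroups of $\bG := \bH\times\bK$ cut out by the equations $x^m=1$ and $x^n=1$ (where $m,n$ are the respective exponents), and to obtain pseudo-Siggers polymorphisms of $(\bH,\neq)$ and $(\bK,\neq)$ simply by restricting a pseudo-Siggers polymorphism of $(\bG,\neq)$, together with its witnessing endomorphisms, to these subgroups.

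First I would record the relevant group theory. Since $\omega$-categorical groups have finite exponent, $\bH$ has some exponent $m$ and $\bK$ some exponent $n$, and $\gcd(m,n)=1$ by hypothesis. Identifying $\bH$ with $\bH\times\{1\}\le\bG$ and $\bK$ with $\{1\}\times\bK\le\bG$, I claim
$$H\times\{1\}=\{g\in G : g^m=1\}\qquad\text{and}\qquad\{1\}\times K=\{g\in G : g^n=1\}.$$
The inclusions $\subseteq$ are clear; for $\supseteq$, if $(h,k)^m=1$ then $k^m=1$, and since also $k^n=1$ and $\gcd(m,n)=1$ we get $k=1$. Consequently, for every $\ell$, any group homomorphism $f\colon \bG^\ell\to\bG$ satisfies $f((H\times\{1\})^\ell)\subseteq H\times\{1\}$ and $f((\{1\}\times K)^\ell)\subseteq\{1\}\times K$: if every coordinate of $\bar g\in G^\ell$ has order dividing $m$, then $\bar g^{\,m}=\bar 1$ in $\bG^\ell$, hence $f(\bar g)^m=f(\bar g^{\,m})=f(\bar 1)=1$. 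In particular every element of $\Aut(\bG)$ and of $\End(\bG)$ preserves $H\times\{1\}$ and $\{1\}\times K$.

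Then, given a pseudo-Siggers polymorphism $f\colon G^6\to G$ of $(\bG,\neq)$ with self-embeddings $e_1,e_2$ of $\bG$ satisfying $e_1(f(x,y,x,z,y,z))=e_2(f(y,x,z,x,z,y))$ for all $x,y,z\in G$, I would argue as follows. Because $f$ is a group homomorphism $\bG^6\to\bG$, by the previous step it restricts to a map $s_H\colon H^6\to H$; this $s_H$ is again a group homomorphism, hence a polymorphism of $\bH$, and it preserves $\neq$ on $H$ since $f$ preserves $\neq$ on $G$ and $H\times\{1\}$ is a substructure of $\bG$, so $s_H\in\Pol^{(6)}(\bH,\neq)$. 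Likewise $e_1,e_2$ restrict to self-embeddings $e_1^H,e_2^H$ of $(\bH,\neq)$. Finally, for $x,y,z\in H$ we have $s_H(x,y,x,z,y,z)=f(x,y,x,z,y,z)\in H\times\{1\}$, so
$$e_1^H\big(s_H(x,y,x,z,y,z)\big)=e_1\big(f(x,y,x,z,y,z)\big)=e_2\big(f(y,x,z,x,z,y)\big)=e_2^H\big(s_H(y,x,z,x,z,y)\big),$$
which exhibits $s_H$ as a pseudo-Siggers polymorphism of $(\bH,\neq)$. Running the symmetric argument with the roles of $\bH$ and $\bK$ (and of $m$ and $n$) swapped gives a pseudo-Siggers polymorphism of $(\bK,\neq)$.

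I do not expect any serious obstacle: the whole argument is a restriction-of-homomorphisms, and the one place where the coprimality assumption genuinely enters is in identifying $H\times\{1\}$ with $\{g:g^m=1\}$ — without it this set is strictly larger than $H\times\{1\}$ and the restriction of $f$ need not land in $\bH$. The only point requiring a moment's care is checking that the restricted operation $s_H$ still preserves $\neq$, which is immediate from injectivity of the inclusion $H\times\{1\}\hookrightarrow G$.
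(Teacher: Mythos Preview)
Your proposal is correct and follows essentially the same route as the paper: both identify $\bH$ with the subgroup $\{g\in G: g^m=1\}$ of $\bG=\bH\times\bK$ (using coprimality of the exponents) and obtain the desired pseudo-Siggers polymorphism by restriction. The paper phrases the restriction step more abstractly via primitive positive definability of $H\times\{1^{\bK}\}$ in $\bG$ (invoking the general fact that polymorphisms restrict to pp-definable subsets, Proposition~\ref{prop:coreh1}), whereas you spell out directly that group homomorphisms $\bG^\ell\to\bG$ and the witnessing endomorphisms $e_1,e_2$ all preserve this subgroup; the content is the same.
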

\begin{proof}
Let $h \in \omega$ be the exponent of $\bH$
and $k \in \omega$ the exponent of $\bK$,
and let $\bG := \bH \times \bK$.
 Suppose   that $(\bG,\neq)$ has a pseudo-Siggers polymorphism $s$. Then
 $\bH$ is isomorphic to the subgroup $\bH \times \{1^{\bK}\}$ of $\bG$, which 
 is precisely the set of all elements of $G$
 that satisfy $x^h = 1$;
 hence, $H \times \{1^{\bK}\}$ is primitive positive definable in $\bG$, and the restriction of $s$ to this set is a pseudo-Siggers polymorphism of $(\bH \times \{1^{\bK}\},\neq)$. Therefore, $(\bH,\neq)$ has a pseudo-Siggers polymorphism. Similarly, $(\bK,\neq)$ has a pseudo-Siggers polymorphism. 
\end{proof} 

\subsection{Undecidable $\omega$-categorical
groups}
\label{sect:undec}
Saracino and Wood~\cite{SaracinoWood}
showed that there are $2^\omega$ non-isomorphic homogeneous $\omega$-categorical groups. Homogeneous $\omega$-categorical structures have quantifier elimination~\cite{Hodges} and in particular they
are model-complete. Hence, if two homogeneous $\omega$-categorical structures are companions, then they must be isomorphic~\cite{Hodges}. 
Recall from Lemma~\ref{lem:c} that $\omega$-categorical algebras $\bA$ and $\bB$ are companions if and only if $\Csp(\bA,\neq)$ and $\Csp(\bB,\neq)$ are the same computational problem. Since there are only countably many Turing machines, it follows
that there are $\omega$-categorical homogeneous  groups $\bA$ such that $\Csp(\bA,\neq)$ is undecidable. 

\subsection{The abelian case}
We now consider $\omega$-categorical abelian groups $\bG$. Our main results
are a characterisation of the case that $(\bG,\neq)$ has a pseudo-Siggers polymorphism 
and a full complexity classification for $\Csp(\bG,\neq)$.
 It is standard to then use additive notation;
so the identity element is from now on denoted by $0$ and the group composition by $+$. 

Recall that 
a \emph{$p$-group} is a group whose elements have orders that are powers of a fixed prime $p$. 
For example, the cyclic group ${\mathbb Z}_{p^n}$ of order $p^n$ is a $p$-group. 
A subgroup of $\bG$ which is a $p$-group
is also called a \emph{$p$-subgroup}. 
We write $\bigoplus_{i \in I} G_i$ for the \emph{direct sum} of the $G_i$, i.e., for
the subgroup of $\prod_{i \in I} G_i$ 
containing all elements that are $1$ at all but finitely many indices. 
If $G_i = G$ for all $i \in I$ then 
we also write $G^{(I)}$ 
instead of $\bigoplus_{i \in I} G$. 
Finite direct products $\bG_1 \times \cdots \times \bG_k$
coincide with finite direct sums $\bG_1 \oplus \cdots \oplus \bG_k$ 
and we use the latter notation in this section. 
As mentioned earlier, an $\omega$-categorical group $\bG$ must be a torsion group.


\begin{theorem}[Theorem 1 in~\cite{Kaplansky}]
\label{thm:class-p}
Every abelian torsion group $\bG$ is the direct sum of its $p$-subgroups. 
\end{theorem}

Recall that every $\omega$-categorical group
must be of finite exponent. 

\begin{corollary}
\label{cor:class-p}
Every abelian group $\bG$ of finite exponent is a finite direct sum of its $p$-subgroups. 
\end{corollary}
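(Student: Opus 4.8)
\textbf{Proof plan for Corollary~\ref{cor:class-p}.}
The statement follows by combining Theorem~\ref{thm:class-p} (Kaplansky) with the finite-exponent hypothesis; the only work is to show that the direct sum of $p$-subgroups appearing in Theorem~\ref{thm:class-p} is in fact \emph{finite} when $\bG$ has finite exponent. First I would note that a group of finite exponent is in particular a torsion group, so Theorem~\ref{thm:class-p} applies and gives $\bG = \bigoplus_{p} \bG_p$, where the sum ranges over all primes $p$ and $\bG_p$ is the $p$-primary component of $\bG$ (the subgroup of elements whose order is a power of $p$).

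Next I would bring in the exponent. Let $n$ be the exponent of $\bG$, so $n \cdot g = 0$ for every $g \in G$. Write the prime factorisation $n = p_1^{k_1} \cdots p_r^{k_r}$. The key observation is that if $p$ is a prime not dividing $n$ and $g \in \bG_p$ has order $p^m$ with $m \geq 1$, then $p^m$ divides the order of $g$, which divides $n$; but $\gcd(p^m, n) = 1$, forcing $p^m = 1$, i.e.\ $g = 0$. Hence $\bG_p = \{0\}$ for every prime $p \notin \{p_1,\dots,p_r\}$, and the direct sum collapses to the finite sum $\bG = \bG_{p_1} \oplus \cdots \oplus \bG_{p_r}$. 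Each $\bG_{p_i}$ is by definition a $p_i$-subgroup of $\bG$, so this is the desired finite direct sum of $p$-subgroups. Finally, a finite direct sum and a finite direct product of groups coincide (as remarked in the text preceding the corollary), so no further bookkeeping is needed.

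There is essentially no obstacle here: the proof is a two-line deduction from a cited theorem together with the elementary number-theoretic fact above. The only point requiring a small amount of care is making sure that "finite direct sum" is meaningful — i.e.\ that only finitely many primary components are nonzero — which is exactly what the coprimality argument delivers. One could alternatively phrase the whole argument by observing that $\bG$ embeds in, indeed equals, a bounded abelian group and invoking the structure theory of bounded abelian groups (every group of exponent $n$ is a direct sum of cyclic groups of orders dividing $n$), but the route through Theorem~\ref{thm:class-p} is the most economical given what has already been set up.
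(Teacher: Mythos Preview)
Your proposal is correct and matches the paper's intended reasoning: the paper states the corollary without proof, treating it as an immediate consequence of Theorem~\ref{thm:class-p} together with the observation that finite exponent forces all but finitely many primary components to vanish. Your coprimality argument is exactly the missing one-line justification, so nothing further is needed.
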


As a consequence of Corollary~\ref{cor:class-p} 
and Lemma~\ref{lem:group-product} we
need to consider abelian $p$-groups. 
We first recall another basic fact from 
the theory of abelian groups. 

\begin{theorem}[Theorem 6 in~\cite{Kaplansky}]\label{thm:class-cyclic}
Every abelian group of finite exponent is a direct sum of cyclic groups. 
\end{theorem}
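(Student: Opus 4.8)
The plan is to reduce to the case of a $p$-group and then run the classical Pr\"ufer--Baer induction on the exponent. First, if $\bG$ is abelian of finite exponent $n$, then by Corollary~\ref{cor:class-p} we may write $\bG = \bG_{p_1} \oplus \cdots \oplus \bG_{p_r}$ as a finite direct sum of its $p$-subgroups, where $\bG_{p_i}$ has exponent a power of $p_i$. A finite direct sum of direct sums of cyclic groups is again a direct sum of cyclic groups, so it suffices to treat a single abelian $p$-group $G$ of exponent $p^k$, which we do by induction on $k$.

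For the base case $k=1$, the group $G$ is an elementary abelian $p$-group, i.e.\ a vector space over the field $\mathbb{F}_p$; by Zorn's lemma it has a basis, and hence $G \cong \bigoplus \mathbb{Z}/p\mathbb{Z}$ is a direct sum of cyclic groups. For the inductive step, suppose the claim holds for exponent $p^{k-1}$ and let $G$ have exponent $p^k$ with $k \geq 2$. The subgroup $pG$ has exponent dividing $p^{k-1}$, so by the inductive hypothesis $pG = \bigoplus_{i \in I} \langle u_i \rangle$ with each $u_i$ of order $p^{k_i}$, $1 \le k_i \le k-1$. One checks directly that the socle of $pG$ satisfies $pG \cap G[p] = \bigoplus_{i} \langle p^{k_i-1} u_i\rangle$, so the elements $\{p^{k_i-1}u_i : i \in I\}$ form an $\mathbb{F}_p$-basis of $pG \cap G[p]$; extend it to an $\mathbb{F}_p$-basis of the socle $G[p]$ by adjoining elements $\{v_\ell : \ell \in L\}$. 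Finally, for each $i$ pick a preimage $w_i \in G$ with $p w_i = u_i$ (possible as $u_i \in pG$); then $w_i$ has order exactly $p^{k_i+1}$ and $p^{k_i} w_i = p^{k_i-1} u_i$.

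The claim is then that $G = \bigoplus_{i \in I}\langle w_i\rangle \oplus \bigoplus_{\ell \in L}\langle v_\ell\rangle$. Directness follows by applying multiplication by $p$ to a purported relation: this kills the $v_\ell$ and reduces to the directness of $\bigoplus_i\langle u_i\rangle$ inside $pG$, which forces the $\langle w_i\rangle$-coefficients to be divisible by $p^{k_i}$; substituting back expresses the remaining relation inside the socle $G[p]$, where the linear independence of $\{p^{k_i-1}u_i\}\cup\{v_\ell\}$ finishes it. For spanning, given $g \in G$ one has $pg \in pG$, so $pg = \sum_i c_i u_i = p\big(\sum_i c_i w_i\big)$; hence $g - \sum_i c_i w_i \in G[p] = \bigoplus_i\langle p^{k_i-1}u_i\rangle \oplus \bigoplus_\ell\langle v_\ell\rangle \subseteq \bigoplus_i\langle w_i\rangle \oplus \bigoplus_\ell\langle v_\ell\rangle$, since $p^{k_i-1}u_i = p^{k_i}w_i$. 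Thus $g$ lies in the asserted direct sum, completing the induction.

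The main obstacle is the bookkeeping in the inductive step: one must choose the lifts $w_i$ so that the order-$p$ subgroup of each $\langle w_i\rangle$ recovers exactly the basis vector $p^{k_i-1}u_i$ of $pG \cap G[p]$, and one must extend that basis to a basis of the full socle without disturbing independence --- this is what makes the whole socle land inside the constructed summand and hence gives spanning. (For uncountable $G$ this uses the axiom of choice twice, once for the base-case basis and once for the basis extension; in the $\omega$-categorical setting of interest the groups are countable, but the argument is identical.) An alternative is to simply invoke the Pr\"ufer--Baer / Kulikov theorem on bounded abelian groups after the reduction to $p$-groups.
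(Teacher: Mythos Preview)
Your argument is correct: this is the standard Pr\"ufer--Baer induction, and each step (the socle computation $pG\cap G[p]=\bigoplus_i\langle p^{k_i-1}u_i\rangle$, the lifting $pw_i=u_i$, and the directness/spanning verification via multiplication by $p$ followed by a socle argument) goes through as written. One small remark: your reduction to $p$-groups via Corollary~\ref{cor:class-p} is logically fine in the paper's ordering, since that corollary is derived from Theorem~\ref{thm:class-p} (primary decomposition) and not from the present theorem, so there is no circularity.

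As for comparison with the paper: there is nothing to compare against. The paper does not supply a proof of this theorem; it simply quotes it as Theorem~6 from Kaplansky's book and uses it as a black box. Your write-up is therefore strictly more than what the paper does, and is exactly the kind of argument Kaplansky gives (he also reduces to $p$-groups of bounded exponent and runs an induction on the exponent using the socle and a lifting of generators of $pG$). If you want to keep the exposition closer to the paper's style, it would be entirely acceptable to replace the full proof by a one-line citation, as the authors do.
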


A group is called \emph{trivial} if it only consists of the identity element $1$, and \emph{non-trivial} otherwise.

\begin{corollary}\label{cor:nf}
Let $\bG$ be a non-trivial 
countable $\omega$-categorical
$p$-group. Then there exists $k \in \omega$ 
such that 
\begin{align}
\bG = {\mathbb Z}_{p^{n_1}}^{(s_1)} \oplus
\cdots \oplus {\mathbb Z}_{p^{n_k}}^{(s_k)}
\label{eq:omega-cat-p-group}
\end{align}
where $n_1,\dots,n_k \in \{1,2,3,\dots\}$ and 
$s_1,\dots,s_k \in \{1,2,\dots\} \cup \{\omega\}$.  
\end{corollary}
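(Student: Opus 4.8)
The plan is to combine the two structure theorems for abelian groups of finite exponent (Theorems~\ref{thm:class-p} and~\ref{thm:class-cyclic}, via Corollary~\ref{cor:class-p}) with the constraint that $\omega$-categoricity imposes on the multiplicities appearing in such a decomposition. First I would invoke Theorem~\ref{thm:class-cyclic}: since $\bG$ is a $p$-group of finite exponent, it is a direct sum of cyclic groups, each of which is of the form ${\mathbb Z}_{p^{n}}$ for some $n \geq 1$, and because the exponent is finite there are only finitely many distinct values $n_1 < n_2 < \cdots < n_k$ that occur. Grouping the summands by their (finitely many) isomorphism types gives
\[
\bG \;\cong\; {\mathbb Z}_{p^{n_1}}^{(I_1)} \oplus \cdots \oplus {\mathbb Z}_{p^{n_k}}^{(I_k)}
\]
for index sets $I_1,\dots,I_k$ of cardinalities $s_1,\dots,s_k$. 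Since $\bG$ is countable, each $s_j$ is either finite or $\omega$, which is exactly the claimed range $\{1,2,\dots\}\cup\{\omega\}$ (a summand of multiplicity $0$ may simply be omitted, and non-triviality guarantees $k \geq 1$ and at least one $s_j \geq 1$).

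The one point that genuinely needs an argument — and which I expect to be the main obstacle — is why each multiplicity $s_j$ must be \emph{finite or exactly $\omega$}, i.e.\ why no uncountable multiplicity can occur; but this is immediate once we recall $\bG$ is \emph{countable} by hypothesis, so the obstacle dissolves. The subtler-looking issue, that the decomposition is into \emph{only finitely many} distinct cyclic types, is likewise handled by finiteness of the exponent: if $p^m$ is the exponent then every cyclic summand ${\mathbb Z}_{p^n}$ satisfies $n \leq m$, so $n$ ranges over the finite set $\{1,\dots,m\}$ and we may take $k \leq m$. Thus $\omega$-categoricity is, strictly speaking, used only through the already-recorded consequence that $\bG$ has finite exponent (stated just before the corollary); no further appeal to $\omega$-categoricity is needed for the normal form itself.

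Concretely, the write-up proceeds as follows. Step one: apply Theorem~\ref{thm:class-cyclic} to $\bG$ to write it as a direct sum $\bigoplus_{\ell \in L} C_\ell$ of cyclic $p$-groups. Step two: each $C_\ell \cong {\mathbb Z}_{p^{n_\ell}}$ with $1 \le n_\ell \le m$ where $p^m$ is the exponent of $\bG$; partition $L$ into the classes $L_j := \{\ell \in L : n_\ell = j\}$ for $j \in \{1,\dots,m\}$ and discard the empty ones, leaving values $n_1,\dots,n_k$ with $1 \le k \le m$. Step three: re-index to get $\bG \cong \bigoplus_{j=1}^{k} {\mathbb Z}_{p^{n_j}}^{(L_j)}$, set $s_j := |L_j|$, and observe $s_j$ is a nonzero cardinal bounded by $|L| \le |\bG| = \aleph_0$, hence $s_j \in \{1,2,\dots\}\cup\{\omega\}$. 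Step four: non-triviality of $\bG$ forces at least one summand, so $k \ge 1$, matching~(\ref{eq:omega-cat-p-group}). All four steps are routine; the only substantive inputs are the two cited classification theorems from~\cite{Kaplansky} together with the finite-exponent property already in hand.
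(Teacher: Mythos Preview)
Your proposal is correct and follows essentially the same route as the paper: apply Theorem~\ref{thm:class-cyclic} to decompose $\bG$ into cyclic $p$-groups, use finite exponent to bound the distinct orders that occur, and use countability to constrain the multiplicities. If anything, your argument is slightly tidier in that you derive ``each summand is ${\mathbb Z}_{p^n}$'' directly from the $p$-group hypothesis rather than re-invoking Theorem~\ref{thm:class-p} as the paper does.
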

\begin{proof}
Theorem~\ref{thm:class-cyclic} shows 
that
$\bG = \bigoplus_{i \in \omega} {\mathbb Z}_{c_i}$ for some integer sequence $(c_i)_{i \in \omega}$ with $c_i \geq 2$ for all $i \in \omega$. 
Theorem~\ref{thm:class-p} implies
that each $c_i$ is of the form $c_i = p^{n_i}$ for
some $n_i \in \{1,2,3,\dots\}$. 
So we may write 
$\bG = \bigoplus_{i \in \omega} 
{\mathbb Z}_{p^{n_i}}^{(s_i)}$ where
$s_i \in \omega \cup \{\omega\}$.  
As $\bG$ has finite exponent we have $s_i=0$ for all but
finitely many $i \in \omega$.
\end{proof}

Understanding $p$-groups
up to bi-embeddability will be useful. 
For groups of the form (\ref{eq:omega-cat-p-group}) we define 
$$m_{\bG} := \begin{cases} 0 &  \text{ if } s_i \in \omega \text{ for every } i \in \{1,\dots,k\} \\
\max \{n:s_n=\omega\} & \text{ otherwise.} 
\end{cases}
$$
Note that $\bG$ is finite if and only if $m_{\bG} = 0$. 
The following is a consequence of a more
general result about bi-embeddability of abelian
$p$-groups~\cite{BarwiseEklof} (Corollary 5.4); see Remark 4.12 in~\cite{CalderoniThomas}.

\begin{lemma}\label{lem:bi-embed}
Let $k,\ell \in \omega$ and $s_n, t_n \in \omega \cup \{\omega\}$. 
Then 
$\bG = \bigoplus_{n \in \{1,\dots,k\}} {\mathbb Z}^{(s_n)}_{p^n}$
and 
$\bH = \bigoplus_{n \in \{1,\dots,\ell\}} {\mathbb Z}^{(t_n)}_{p^n}$ are bi-embeddable if and only if
$m_{\bG} = m_{\bH}$ and $s_n = t_n$ for all $n \geq m_{\bG}$.  
\end{lemma}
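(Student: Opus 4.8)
The plan is to prove the lemma directly from the explicit decomposition provided by Corollary~\ref{cor:nf}, using the cumulative Ulm-type cardinal invariant
\[
g_{\bG}(n) \;:=\; \dim_{{\mathbb F}_p}\big((p^n\bG)[p]\big) \;\in\; \omega \cup \{\omega\}, \qquad n \in \omega,
\]
where $(p^n\bG)[p] = \{x \in p^n\bG : px = 0\}$ is regarded as an ${\mathbb F}_p$-vector space. For $\bG = \bigoplus_{n \in \{1,\dots,k\}} {\mathbb Z}^{(s_n)}_{p^n}$ one computes $(p^n{\mathbb Z}_{p^j})[p] \cong {\mathbb Z}_p$ if $j > n$ and $(p^n{\mathbb Z}_{p^j})[p] = 0$ if $j \le n$, hence $g_{\bG}(n) = \sum_{j > n} s_j$, with any sum containing a term equal to $\omega$ being $\omega$ (and $s_j := 0$ for $j > k$). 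From this one reads off two facts: first, $m_{\bG} = \min\{n \ge 0 : g_{\bG}(n) < \omega\}$; second, whenever $n \ge m_{\bG}$ both $g_{\bG}(n)$ and $g_{\bG}(n+1)$ are finite and $s_{n+1} = g_{\bG}(n) - g_{\bG}(n+1)$. So the sequence $g_{\bG}$ determines $m_{\bG}$ and all multiplicities $s_n$ with $n \ge m_{\bG}+1$; the remaining value $s_{m_{\bG}}$ is $\omega$ when $m_{\bG} \ge 1$ by definition of $m_{\bG}$; and the values $s_n$ with $n < m_{\bG}$ contribute only an $\omega$-summand and are invisible to $g_{\bG}$ --- which is precisely why the statement leaves them unconstrained.

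For the direction ``$\Rightarrow$'': any embedding $\bG \hookrightarrow \bH$ maps $p^n\bG$ into $p^n\bH$ and restricts to an injective ${\mathbb F}_p$-linear map $(p^n\bG)[p] \to (p^n\bH)[p]$, so $g_{\bG}(n) \le g_{\bH}(n)$ for every $n$. If $\bG$ and $\bH$ are bi-embeddable we get $g_{\bG} = g_{\bH}$ pointwise, and by the first paragraph $m_{\bG} = m_{\bH} =: m$ and $s_n = t_n$ for all $n \ge m+1$; if $m \ge 1$ then moreover $s_m = t_m = \omega$ by definition of $m$, and if $m = 0$ then $\bG$ and $\bH$ are finite and $g_{\bG}=g_{\bH}$ already forces all $s_n = t_n$. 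Either way $m_{\bG} = m_{\bH}$ and $s_n = t_n$ for all $n \ge m_{\bG}$.

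For the direction ``$\Leftarrow$'': if $m := m_{\bG} = m_{\bH} = 0$, then $\bG$ and $\bH$ are finite and $s_n = t_n$ for all $n$ gives $\bG \cong \bH$. If $m \ge 1$, set $U := \bigoplus_{n \ge m} {\mathbb Z}^{(s_n)}_{p^n}$, which by hypothesis is a common direct summand of $\bG$ and of $\bH$, and which contains ${\mathbb Z}^{(\omega)}_{p^m}$ as a summand since $s_m = \omega$. The complementary summand $\bigoplus_{1 \le n < m} {\mathbb Z}^{(s_n)}_{p^n}$ of $\bG$ is a countable direct sum of cyclic groups of order at most $p^{m-1}$, each of which embeds into ${\mathbb Z}_{p^m}$; the direct sum of these embeddings gives $\bigoplus_{1 \le n < m} {\mathbb Z}^{(s_n)}_{p^n} \hookrightarrow {\mathbb Z}^{(\omega)}_{p^m}$. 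Using ${\mathbb Z}^{(\omega)}_{p^m} \oplus {\mathbb Z}^{(\omega)}_{p^m} \cong {\mathbb Z}^{(\omega)}_{p^m}$ we obtain $\bG \hookrightarrow U$, and symmetrically $\bH \hookrightarrow U$; since $U$ is a direct summand of both, also $U \hookrightarrow \bG$ and $U \hookrightarrow \bH$, so $\bG$ and $\bH$ are bi-embeddable.

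The only genuine content is isolating the invariant $g_{\bG}$ and its monotonicity under embeddings; after that, ``$\Rightarrow$'' is cardinal arithmetic over $\omega \cup \{\omega\}$ --- the point being that $\omega - \omega$ is meaningless, which is exactly what localises the information carried by bi-embeddability to the ``top'' of the group --- and ``$\Leftarrow$'' is the routine absorption argument above. (Alternatively, one can prove once and for all that $\bG \hookrightarrow \bH$ if and only if $g_{\bG}(n) \le g_{\bH}(n)$ for all $n$, by a transfinite greedy matching of cyclic summands, which packages both directions uniformly.) The lemma is of course also a special case of the classification of bi-embeddability of abelian $p$-groups in~\cite{BarwiseEklof,CalderoniThomas}, but in the bounded-exponent case this self-contained argument is shorter.
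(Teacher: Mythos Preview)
Your proof is correct. The paper does not actually prove this lemma: it simply records it as a consequence of the general bi-embeddability classification for abelian $p$-groups due to Barwise--Eklof (Corollary~5.4 there), via Remark~4.12 of Calderoni--Thomas. You instead give a self-contained argument tailored to the bounded-exponent case, using the cumulative Ulm-type invariant $g_{\bG}(n) = \dim_{{\mathbb F}_p}\big((p^n\bG)[p]\big)$, which is monotone under embeddings and from which $m_{\bG}$ and the multiplicities $s_n$ for $n \ge m_{\bG}$ can be read off; the converse direction is the straightforward absorption of the low-order summands into the ${\mathbb Z}_{p^m}^{(\omega)}$ factor. This is genuinely more elementary than invoking the Barwise--Eklof machinery (which is designed for arbitrary abelian $p$-groups and involves transfinite Ulm sequences), and in fact you note this yourself at the end of the proposal. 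Since the paper only needs the bounded-exponent case and already has Corollary~\ref{cor:nf} available, your direct argument would fit well here and would make the paper slightly more self-contained.
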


We apply this lemma to the two possibilities that
arise in Proposition~\ref{prop:group-involution}
and start with the easier situation where 
$\bG^2 \hookrightarrow \bG$. 

\begin{lemma} 
\label{lem:square}
Let $\bG$ be an $\omega$-categorical abelian $p$-group. Then $\bG^2 \hookrightarrow \bG$ if and only if $\bG$ is
bi-embeddable with ${\mathbb Z}^{(\omega)}_{p^n}$ for some $n \in \omega$. 
\end{lemma}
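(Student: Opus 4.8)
The plan is to use the classification of $\omega$-categorical abelian $p$-groups from Corollary~\ref{cor:nf} together with the bi-embeddability criterion of Lemma~\ref{lem:bi-embed}. Write $\bG = {\mathbb Z}_{p^{n_1}}^{(s_1)} \oplus \cdots \oplus {\mathbb Z}_{p^{n_k}}^{(s_k)}$ as in (\ref{eq:omega-cat-p-group}), with $n_1 < \cdots < n_k$ and $s_i \in \{1,2,\dots\} \cup \{\omega\}$. First I would record the easy direction: if $\bG$ is bi-embeddable with ${\mathbb Z}_{p^n}^{(\omega)}$, then $\bG^2$ is bi-embeddable with ${\mathbb Z}_{p^n}^{(\omega)} \oplus {\mathbb Z}_{p^n}^{(\omega)} \cong {\mathbb Z}_{p^n}^{(\omega)}$, hence $\bG^2$ is bi-embeddable with $\bG$; since $\bG$ is $\omega$-categorical and bi-embeddability of $\omega$-categorical algebras is the same as having the same age (Lemma~\ref{lem:c}), and $\bG^2$ embeds into any structure bi-embeddable with it that has the same age, we get $\bG^2 \hookrightarrow \bG$. (One has to be slightly careful here: bi-embeddability gives an embedding $\bG^2 \hookrightarrow \bG'$ for $\bG'$ bi-embeddable with $\bG$, and then $\bG' \hookrightarrow \bG$, so composing yields $\bG^2 \hookrightarrow \bG$ directly.)

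For the converse, suppose $\bG^2 \hookrightarrow \bG$; then $\bG$ and $\bG^2$ are bi-embeddable (the reverse embedding $\bG \hookrightarrow \bG^2$ is the diagonal, or just inclusion into the first coordinate). Now I apply Lemma~\ref{lem:bi-embed} with $\bH := \bG^2$. The group $\bG^2 = {\mathbb Z}_{p^{n_1}}^{(2s_1)} \oplus \cdots \oplus {\mathbb Z}_{p^{n_k}}^{(2s_k)}$, where $2\omega = \omega$. The key numerical observations are: $m_{\bG^2} = m_{\bG}$ (the set $\{n : \text{the exponent } \omega \text{ occurs}\}$ is unchanged when all $s_i$ are doubled, since $s_i = \omega \Leftrightarrow 2s_i = \omega$), so the condition $m_{\bG} = m_{\bH}$ is automatic; and the condition "$s_n = t_n$ for all $n \geq m_{\bG}$" becomes "$s_n = 2 s_n$ for all $n \geq m_{\bG}$", which for a value in $\omega \cup \{\omega\}$ forces $s_n \in \{0, \omega\}$. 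So the bi-embeddability $\bG \sim \bG^2$ holds if and only if every $s_n$ with $n \geq m_{\bG}$ lies in $\{0,\omega\}$.

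It remains to translate "$s_n \in \{0,\omega\}$ for all $n \geq m_{\bG}$" into "$\bG$ is bi-embeddable with ${\mathbb Z}_{p^n}^{(\omega)}$ for some $n$." If $m_{\bG} = 0$ then $\bG$ is finite, so $\bG^2 \hookrightarrow \bG$ forces $|\bG|^2 \leq |\bG|$, hence $\bG$ is trivial, which is bi-embeddable with ${\mathbb Z}_{p^n}^{(\omega)}$ for, say, $n = 0$ read as the trivial group — here I should state the convention or simply note that the lemma as phrased allows $n \in \omega$ and handle the trivial case separately if the convention excludes it. If $m := m_{\bG} \geq 1$, then $s_m = \omega$ by definition of $m_{\bG}$, and for $n > m$ we have $s_n \in \{0,\omega\}$ with $s_n = \omega$ impossible (that would contradict maximality of $m$), so $s_n = 0$ for $n > m$; thus the "tail from $m$ onward" of $\bG$ is just ${\mathbb Z}_{p^m}^{(\omega)}$, and Lemma~\ref{lem:bi-embed} (comparing $\bG$ with ${\mathbb Z}_{p^m}^{(\omega)}$, whose invariant is also $m$ with $t_m = \omega$, $t_n = 0$ for $n > m$) gives $\bG \sim {\mathbb Z}_{p^m}^{(\omega)}$. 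I expect the main obstacle to be purely bookkeeping: getting the edge conventions in Lemma~\ref{lem:bi-embed} right (what $t_n$ means for $n < m_{\bG}$, whether indices below $m_{\bG}$ are genuinely unconstrained) and cleanly handling the trivial/finite case within the statement "bi-embeddable with ${\mathbb Z}_{p^n}^{(\omega)}$ for some $n \in \omega$."
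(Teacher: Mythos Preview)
Your proposal is correct and follows essentially the same route as the paper: write $\bG$ in the normal form of Corollary~\ref{cor:nf}, apply the bi-embeddability criterion of Lemma~\ref{lem:bi-embed} to $\bG$ and $\bG^2$ to force $s_n = 2s_n$ for $n \geq m_{\bG}$, and conclude that $\bG$ is bi-embeddable with ${\mathbb Z}_{p^{m_{\bG}}}^{(\omega)}$. The paper's proof is terser (it dispatches the trivial case in one clause and calls the converse ``immediate''), but the argument is the same, and your extra care with the edge cases and the trivial-group convention is well placed.
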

\begin{proof}
The statement is trivial if $\bG$ is trivial. Otherwise, by Corollary~\ref{cor:nf}, $\bG$ can be written
as $\bG = \bigoplus_{n \in \{1,\dots,k\}} {\mathbb Z}_{p^n}^{(s_n)}$. Then $s_n = 2s_n$ for every $n > m_{\bG}$ by Lemma~\ref{lem:bi-embed}. 
Hence, $s_n = 0$ for every $n \geq m_{\bG}$, 
and again by Lemma 6.14 we conclude that $\bG$ is bi-embeddable with ${\mathbb Z}^{(\omega)}_{p^{m_{\bG}}}$. The converse is immediate. 
\end{proof}

We now treat the other possibility that arises in Proposition~\ref{prop:group-involution} and which involves the quotient 
$\bG/\langle x \rangle$ for 
a central 
involution $x$ of $\bG$. We first have to recall how this quotient looks like if $\bG$ is a countable abelian $\omega$-categorical $2$-group and hence of the form as described in Corollary~\ref{cor:nf}. 

\begin{lemma}\label{lem:quotient}
Let $\bG = {\mathbb Z}_{2^1}^{(s_1)} \oplus \cdots \oplus  {\mathbb Z}_{2^k}^{(s_k)}$
be an abelian $\omega$-categorical
2-group and let $x \in G$ be a central involution.
Then $G/\langle x \rangle$ is isomorphic
to 
$${\mathbb Z}_{2^1}^{(s_1)} \oplus \cdots \oplus {\mathbb Z}_{2^{i-2}}^{(s_{i-2})} \oplus 
{\mathbb Z}_{2^{i-1}}^{(s_{i-1}+1)} \oplus
{\mathbb Z}_{2^{i}}^{(s_{i}-1)} \oplus
{\mathbb Z}_{2^{i+1}}^{(s_{i+1})} \oplus \cdots \oplus {\mathbb Z}_{2^k}^{(s_k)}.$$
for some $i \in \{1,\dots,k\}$ (where $s_0=0$ if $i=1$). 
\end{lemma}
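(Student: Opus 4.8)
The plan is to reduce the claim to a single change-of-basis move inside the decomposition of $\bG$ provided by Corollary~\ref{cor:nf}. Since $\bG$ is abelian, a \emph{central involution} is just an element of order $2$, so $x \neq 0$ and $2x = 0$. Fix a decomposition $\bG = \bigoplus_{j \in I} \langle c_j \rangle$ realising Corollary~\ref{cor:nf}, where each $c_j$ has order $2^{e_j}$ with $e_j \in \{1,\dots,k\}$ and, for each $\ell$, there are exactly $s_\ell$ indices $j$ with $e_j = \ell$. Write $x = \sum_{j \in J} x_j$ for the finite nonempty set $J$ of indices at which $x$ has a nonzero component. Since $2x = 0$, each $x_j$ has order $2$, hence is the unique element of order $2$ of $\langle c_j \rangle$, namely $x_j = 2^{e_j - 1} c_j$. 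Consequently $x_j \in 2^m \bG$ if and only if $m \le e_j - 1$, so the largest $m$ with $x \in 2^m\bG$ equals $h := \big(\min_{j \in J} e_j\big) - 1$. Put $i := h + 1$ and pick $j_0 \in J$ with $e_{j_0} = i$; then $i \in \{1,\dots,k\}$, and $s_i \geq 1$ since $c_{j_0}$ witnesses this.

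The heart of the argument is to exhibit $\bG$ as $\langle z \rangle \oplus \bG'$ with $z$ of order $2^i$ and $2^{i-1}z = x$. Factoring $2^{i-1}$ out of $x = \sum_{j \in J} 2^{e_j-1}c_j$ (legitimate because $e_j \ge i$ for all $j \in J$, with equality exactly for $j = j_0$), set $z := \sum_{j \in J} 2^{e_j - i}c_j = c_{j_0} + w$, where $w := \sum_{j \in J \setminus \{j_0\}} 2^{e_j - i}c_j$ lies in $\bG' := \bigoplus_{j \neq j_0} \langle c_j \rangle$. A one-line computation gives $2^i z = 0$, and since the $\langle c_{j_0}\rangle$-component of $z$ is the generator $c_{j_0}$, the element $z$ has order exactly $2^i$; comparing $\langle c_{j_0}\rangle$-components also gives $\langle z \rangle \cap \bG' = 0$ and $c_{j_0} = z - w \in \langle z \rangle + \bG'$, whence $\bG = \langle z \rangle \oplus \bG'$ and $2^{i-1}z = x$, so that $\langle x \rangle \subseteq \langle z \rangle$.

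It remains to read off the quotient. Because $\langle x \rangle \subseteq \langle z \rangle$, we get $\bG/\langle x \rangle \cong \big(\langle z \rangle / \langle x \rangle\big) \oplus \bG' \cong {\mathbb Z}_{2^{i-1}} \oplus \bG'$, using that $\langle z \rangle \cong {\mathbb Z}_{2^i}$ modulo its subgroup of order $2$ is ${\mathbb Z}_{2^{i-1}}$. Now $\bG'$ is obtained from $\bG$ by deleting one cyclic summand of order $2^i$, hence has $s_m$ summands of order $2^m$ for $m \neq i$ and $s_i - 1$ for $m = i$, and the extra ${\mathbb Z}_{2^{i-1}}$ raises the count of order-$2^{i-1}$ summands by one; this is precisely the asserted isomorphism type, with the understanding that when $i = 1$ the factor ${\mathbb Z}_{2^{i-1}}$ is trivial (so $s_0 = 0$). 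The only mildly delicate point is the verification that $\bG = \langle z \rangle \oplus \bG'$, i.e., that swapping $c_{j_0}$ for $z = c_{j_0} + w$ with $w \in \bG'$ preserves directness; this is elementary, via the componentwise argument sketched above. Alternatively, one could cite the classical structure-theoretic fact that an element of order $p$ and height $h$ in a bounded abelian $p$-group lies in a cyclic direct summand of order $p^{h+1}$, which encapsulates exactly this step.
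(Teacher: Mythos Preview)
Your proof is correct and follows the same strategy as the paper's: construct an element $z$ of order $2^i$ with $2^{i-1}z = x$ that splits off as a cyclic direct summand, after which the quotient by $\langle x\rangle$ affects only that one summand. In fact your $z = \sum_{j\in J} 2^{e_j-i}c_j$ is exactly the element the paper calls $y$; the only difference is how the splitting $\bG = \langle z\rangle \oplus \bG'$ is justified. The paper invokes a basis-extension lemma from Schenkman (an element with coprime coefficients relative to a generating set can be completed to a generating set), whereas you carry out the unipotent change-of-basis $c_{j_0}\mapsto c_{j_0}+w$ by hand and check directness via the $\langle c_{j_0}\rangle$-component. Your argument is therefore more self-contained, and your identification of $i$ as one plus the height of $x$ in $\bG$ makes the index canonical rather than implicit.
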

\begin{proof} 
Let $g_1,g_2,\dots \in G$ be such that $\bG = \bigoplus_{i \in \omega} \langle g_i \rangle$. 

{\bf Claim.}  $x$ is a power of some generator $g_i$. Suppose without loss of generality
that $x = (m_1 g_1,\dots,m_r g_r,0,\dots)$ for $r \in \omega$ and $m_1,\dots,m_r \in \omega$ powers of two. Let $t \in \{1,\dots,r\}$ be such that $m_t$ is minimal among $m_1,\dots,m_r$, 
and let $a_i := m_i/m_t$ for $i \in \{1,\dots,r\}$. 
Let $y = (a_1 g_1,\dots,a_r g_r,0,0,\dots)$
be so that $x = y^{m_t}$. 
Since $a_1,\dots,a_r$ have greatest common divisor $1$, 
we may use the following lemma (Lemma II.3.b in~\cite{Schenkman}): if $g_1,\dots,g_r$ are generators of an abelian group $\bH$, and if $a_1,\dots,a_r$ are integers with greatest common divisor $1$, 
then the element
$a_1 g_1 + a_2 g_2 + \cdots + a_r g_r$ is one of a set of $r$ generators of $\bH$. 
So there are $h_2,\dots,h_r \in G$
such that $\bG$ can be written as
$$\bG = \langle y \rangle \oplus \bigoplus_{i \in \{2,\dots,r\}} \langle h_i \rangle \oplus \bigoplus_{i \in \{r,r+1,\dots\}} \langle g_i \rangle$$
Since ${\mathbb Z}_{2^s} / {\mathbb Z}_2$ is isomorphic to ${\mathbb Z}_{2^{s-1}}$,
we get that 
${\mathbb Z}_{2^1}^{(s_1)} \oplus \cdots \oplus {\mathbb Z}^{(s_k)}_{2^k}/\langle x \rangle$ 
is of the form as described in the statement. 
\end{proof}

\begin{lemma}\label{lem:2}
A non-trivial $\omega$-categorical abelian 2-group $\bG$ is bi-embeddable with $\bG \oplus \bG/\langle x\rangle$ 
for some involution $x \in G$ if and only if 
\begin{itemize}
\item $\bG^2 \hookrightarrow \bG$, or 
\item $\bG$ is bi-embeddable with ${\mathbb Z}^{(\omega)}_{2^n} \oplus {\mathbb Z}_{2^{n+1}}$ for some $n \in \omega$. 
\end{itemize}
\end{lemma}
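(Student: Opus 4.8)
The plan is to translate the statement into bookkeeping about direct-sum decompositions. By Corollary~\ref{cor:nf} write $\bG = \bigoplus_{n\geq 1}{\mathbb Z}_{2^n}^{(s_n)}$ with $s_n \in \omega\cup\{\omega\}$, only finitely many $s_n$ nonzero and not all $s_n$ equal to zero (since $\bG$ is nontrivial); set $\mu := m_{\bG}$, so $s_n$ is finite for every $n>\mu$ and $s_\mu = \omega$ whenever $\mu\geq 1$. The two external facts I would use are: Lemma~\ref{lem:bi-embed}, which says that two such groups are bi-embeddable exactly when their invariants $m_{(\cdot)}$ agree and their multiplicity sequences agree from that index onwards; and Lemma~\ref{lem:quotient}, which says that for any involution $x$ there is an index $i$ with $s_i\geq 1$ such that $\bG/\langle x\rangle$ has the same decomposition except that the multiplicity of ${\mathbb Z}_{2^{i-1}}$ increases by one and that of ${\mathbb Z}_{2^i}$ decreases by one (reading ${\mathbb Z}_{2^0}$ as the trivial group, so that the case $i=1$ simply decreases the multiplicity of ${\mathbb Z}_2$ by one).

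For the direction ``if'', I would first observe that $\bG/\langle x\rangle\hookrightarrow\bG$ for \emph{every} involution $x$: in the notation above ${\mathbb Z}_{2^{i-1}}^{(s_{i-1}+1)} = {\mathbb Z}_{2^{i-1}}^{(s_{i-1})}\oplus{\mathbb Z}_{2^{i-1}}$ embeds into ${\mathbb Z}_{2^{i-1}}^{(s_{i-1})}\oplus{\mathbb Z}_{2^i}$ since $s_i\geq 1$ and ${\mathbb Z}_{2^{i-1}}\hookrightarrow{\mathbb Z}_{2^i}$, and all other summands are unchanged. Hence if $\bG^2\hookrightarrow\bG$ then, picking any involution $x$ (one exists because $\bG$ is a nontrivial $2$-group), $\bG\hookrightarrow\bG\oplus\bG/\langle x\rangle\hookrightarrow\bG\oplus\bG=\bG^2\hookrightarrow\bG$, so the three groups are pairwise bi-embeddable. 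If instead $\bG$ is bi-embeddable with ${\mathbb Z}_{2^n}^{(\omega)}\oplus{\mathbb Z}_{2^{n+1}}$, I would choose $x$ of order two inside a ${\mathbb Z}_{2^{n+1}}$-summand of $\bG$ (such a summand exists by Lemma~\ref{lem:bi-embed} applied to the hypothesis); then Lemma~\ref{lem:quotient} with $i=n+1$ gives $s'_n = s_n+1 = \omega$, $s'_{n+1} = s_{n+1}-1 = 0$ and no change above index $n$, so $\bG/\langle x\rangle$ is bi-embeddable with ${\mathbb Z}_{2^n}^{(\omega)}$. Using that bi-embeddability is preserved by $\oplus$ and that ${\mathbb Z}_{2^n}^{(\omega)}\oplus{\mathbb Z}_{2^n}^{(\omega)}\cong{\mathbb Z}_{2^n}^{(\omega)}$, it follows that $\bG\oplus\bG/\langle x\rangle$ is bi-embeddable with $({\mathbb Z}_{2^n}^{(\omega)}\oplus{\mathbb Z}_{2^{n+1}})\oplus{\mathbb Z}_{2^n}^{(\omega)}\cong{\mathbb Z}_{2^n}^{(\omega)}\oplus{\mathbb Z}_{2^{n+1}}$, hence with $\bG$. (For $n=0$ this degenerates to $\bG\cong{\mathbb Z}_2$ with $\bG/\langle x\rangle$ trivial.)

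For the direction ``only if'', suppose $\bG$ is bi-embeddable with $\bG\oplus\bG/\langle x\rangle$ and let $i$ be as in Lemma~\ref{lem:quotient} for $x$. The multiplicity sequence $(s''_n)$ of $\bG\oplus\bG/\langle x\rangle$ is $s''_n = 2s_n$ for $n\notin\{i-1,i\}$, $s''_{i-1} = 2s_{i-1}+1$, $s''_i = 2s_i-1$; since $s''_n = \omega$ exactly when $s_n = \omega$, one gets $m_{\bG\oplus\bG/\langle x\rangle} = \mu$, so Lemma~\ref{lem:bi-embed} yields $s_n = s''_n$ for all $n\geq\mu$. The only finite solution of $s=2s$ is $s=0$, that of $s=2s-1$ is $s=1$, and $s=2s+1$ has no finite solution, while $s=\omega$ solves all three. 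Since $s_n$ is finite for $n>\mu$, the equation at $n=i-1$ rules out $i\geq\mu+2$ (then $n=i-1>\mu$ would have to satisfy $s_{i-1}=2s_{i-1}+1$ with $s_{i-1}$ finite), so $i\leq\mu+1$. If $i\leq\mu$ then every $n>\mu$ avoids $\{i-1,i\}$, so $s_n=0$ there and $\bG$ is bi-embeddable with ${\mathbb Z}_{2^\mu}^{(\omega)}$; by Lemma~\ref{lem:square} this gives $\bG^2\hookrightarrow\bG$. If $i=\mu+1$ then $s_{\mu+1}=1$ and $s_n=0$ for $n>\mu+1$, so $\bG$ is bi-embeddable with ${\mathbb Z}_{2^\mu}^{(\omega)}\oplus{\mathbb Z}_{2^{\mu+1}}$ by Lemma~\ref{lem:bi-embed}. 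When $\bG$ is finite one has $\mu=0$ and $i\geq 1$, forcing $i=1=\mu+1$, which lands in the second case with $\bG\cong{\mathbb Z}_2$ and $n=0$.

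I expect the bookkeeping in the ``only if'' direction to be the main obstacle: one must verify that $m_{\bG}$ is preserved when passing from $\bG$ to $\bG/\langle x\rangle$ and then to $\bG\oplus\bG/\langle x\rangle$ (so that Lemma~\ref{lem:bi-embed}, which constrains only multiplicities of index $\geq m_{\bG}$, applies cleanly), solve the multiplicity equations correctly, and check the boundary configurations $i\in\{1,\mu,\mu+1\}$ and $\mu=0$ without overlooking a possibility — the last being the slightly unexpected degenerate case where $\bG\cong{\mathbb Z}_2$ still satisfies the right-hand side, via $n=0$.
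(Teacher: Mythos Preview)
Your proof is correct and follows essentially the same route as the paper's: both reduce the question to bookkeeping with the multiplicity sequences via Corollary~\ref{cor:nf}, Lemma~\ref{lem:bi-embed}, Lemma~\ref{lem:quotient}, and Lemma~\ref{lem:square}. Your ``if'' case for $\bG^2\hookrightarrow\bG$ is slightly slicker (observing $\bG/\langle x\rangle\hookrightarrow\bG$ directly and sandwiching, where the paper first passes through Lemma~\ref{lem:square}), and your ``only if'' direction is organised around $\mu=m_{\bG}$ rather than the top nonzero index $k$, but these are cosmetic differences.
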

\begin{proof}
By Corollary~\ref{cor:nf}, the 2-group 
$\bG$ can be written as 
$\bigoplus_{n \in \{1,\dots,k\}} {\mathbb Z}_{2^n}^{(s_n)}$ for $s_k > 0$. 
First suppose that 
$\bG$ is bi-embeddable with $\bG \oplus \bG/\langle x\rangle$ 
for some involution $x \in G$. 
If $\bG^2 \hookrightarrow \bG$ then we are done. Otherwise, 
by Lemma~\ref{lem:square} the group $\bG$ is 
not bi-embeddable with ${\mathbb Z}^{(\omega)}_{p^n}$ for some $n \in \omega$. 
Corollary~\ref{cor:nf} then implies that $s_k < \omega$. By Lemma~\ref{lem:quotient} there exists a unique $i \in \{1,\dots,k\}$ such that $G/\langle x \rangle$ is isomorphic to 
$${\mathbb Z}_2^{(s_1)} \oplus \cdots \oplus {\mathbb Z}_{2^{i-2}}^{(s_{i-2})} \oplus 
{\mathbb Z}_{2^{i-1}}^{(s_{i-1}+1)} \oplus
{\mathbb Z}_{2^{i}}^{(s_{i}-1)} \oplus
{\mathbb Z}_{2^{i+1}}^{(s_{i+1})} \oplus \cdots \oplus {\mathbb Z}_{2^k}^{(s_k)}.$$
Since $\bG$ is bi-embeddable with 
$\bG \oplus \bG / \langle x \rangle$ 
Lemma~\ref{lem:bi-embed} implies that either $s_k = s_k (s_k - 1)$ if $k=i$
or $s_k = 2s_k$ otherwise. 
Since $s_k$ is finite and non-zero (as $\bG$ contains an involution, it must be non-trivial) we conclude that $k=i$ and $s_k = 1$. 
If $s_{k-1}$ is finite then 
$s_{k-1} = 2s_{k-1} + 1$
by Lemma~\ref{lem:bi-embed}, a contradiction.
So $s_{k-1} = \omega$, and by Lemma~\ref{lem:bi-embed} 
$\bG$ is bi-embeddable with ${\mathbb Z}^{(\omega)}_{2^{k-1}} \oplus {\mathbb Z}_{2^k}$, as required. 

Conversely, if $\bG^2 \hookrightarrow \bG$
then by Lemma~\ref{lem:square}
$\bG$ is bi-embeddable with 
${\mathbb Z}^{(\omega)}_{2^n}$ 
for some $n \in \omega$, and in fact $n \geq 1$ since $\bG$ is non-trivial. 
Hence, there exists an involution $x \in G$ and 
$\bG/\langle x \rangle$ is biembeddable with ${\mathbb Z}_{2^{n-1}} \oplus {\mathbb Z}_{2^n}^{(\omega)}$ by Lemma~\ref{lem:quotient}, which is bi-embeddable with ${\mathbb Z}_{2^n}^{(\omega)}$ by Lemma~\ref{lem:bi-embed}. 
We conclude that $\bG$ is bi-embeddable with 
$\bG \oplus \bG/\langle x \rangle$. 
If $\bG$ is bi-embeddable with ${\mathbb Z}^{(\omega)}_{2^n} \oplus {\mathbb Z}_{2^{n+1}}$ for some $n \in \omega$, let $x$ be an involution generated by an element
of order $2^{n+1}$. Then $\bG/\langle x \rangle$ is bi-embeddable with $ 
 {\mathbb Z}^{(\omega)}_{2^n} \oplus {\mathbb Z}_{2^{n}}$, which is isomorphic to ${\mathbb Z}^{(\omega)}_{2^n}$ 
and hence $\bG \oplus \bG/\langle x \rangle$ is bi-embeddable with ${\mathbb Z}^{(\omega)}_{2^n} \oplus {\mathbb Z}_{2^{n+1}} \oplus {\mathbb Z}^{(\omega)}_{2^n}$
and hence with $\bG$. 
\end{proof}

\begin{proposition}\label{prop:pseudo-sig-abelian-group}
Let $\bG$ be an $\omega$-categorical abelian group such that $(\bG,\neq)$ has a pseudo-Siggers polymorphism. Then $\bG$ is bi-embeddable with 
${\mathbb Z}_m^{(\omega)}$ or with 
${\mathbb Z}_m^{(\omega)} \oplus {\mathbb Z}_{2m}$ for some $m \geq 1$. 
\end{proposition}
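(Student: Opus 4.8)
The plan is to reduce to $p$-groups, treat each $p$-group separately using the results just established, and then reassemble. First I would apply Corollary~\ref{cor:class-p} to write $\bG = \bG_{p_1}\oplus\cdots\oplus\bG_{p_r}$ as the finite direct sum of its $p$-subgroups, where $p_1<\dots<p_r$ are the primes dividing the exponent. Each $\bG_{p_j}$ is primitive positive definable in $\bG$ (as $\{x : p_j^{e_j}x = 0\}$, where $p_j^{e_j}$ is the $p_j$-part of the exponent), hence $\omega$-categorical; likewise the complementary summand $\bigoplus_{l\neq j}\bG_{p_l}$ is primitive positive definable (as $\{x : \exists y\ (x = p_j^{e_j} y)\}$) and $\omega$-categorical. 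Since the exponents of $\bG_{p_j}$ and of $\bigoplus_{l\neq j}\bG_{p_l}$ are coprime, splitting off one prime at a time and applying Lemma~\ref{lem:group-product} repeatedly shows that $(\bG_{p_j},\neq)$ has a pseudo-Siggers polymorphism for every $j$.

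Next I would analyse a single $p$-group $\bP := \bG_{p_j}$. By Proposition~\ref{prop:group-involution}, either $\bP^2\hookrightarrow\bP$, or there is a central involution $i\in\bP$ with $\bP\times\bP/\langle i\rangle\hookrightarrow\bP$. If $p$ is odd the second alternative cannot occur, since a $p$-group has no element of order $2$; so $\bP^2\hookrightarrow\bP$, and Lemma~\ref{lem:square} gives that $\bP$ is bi-embeddable with ${\mathbb Z}_{p^n}^{(\omega)}$ for some $n\in\omega$. If $p = 2$: in the first alternative Lemma~\ref{lem:square} again gives bi-embeddability with ${\mathbb Z}_{2^n}^{(\omega)}$; in the second alternative, since $\bP\hookrightarrow\bP\oplus\bP/\langle i\rangle$ always holds, $\bP$ is bi-embeddable with $\bP\oplus\bP/\langle i\rangle$, and $\bP$ is non-trivial (it contains $i$), so Lemma~\ref{lem:2} applies and yields either $\bP^2\hookrightarrow\bP$ (reducing to the previous case) or bi-embeddability of $\bP$ with ${\mathbb Z}_{2^n}^{(\omega)}\oplus{\mathbb Z}_{2^{n+1}}$ for some $n\in\omega$. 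Thus each $p$-part of $\bG$ is bi-embeddable with ${\mathbb Z}_{p^n}^{(\omega)}$, except that the $2$-part may instead be bi-embeddable with ${\mathbb Z}_{2^n}^{(\omega)}\oplus{\mathbb Z}_{2^{n+1}}$.

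Finally I would reassemble. Bi-embeddability is preserved under finite direct sums (take componentwise embeddings), so $\bG$ is bi-embeddable with a direct sum $\bigoplus_j {\mathbb Z}_{q_j}^{(\omega)}$ over pairwise coprime prime powers $q_j$, plus possibly one extra cyclic summand ${\mathbb Z}_{2^{n+1}}$ coming from the $2$-part. The Chinese Remainder Theorem together with the rearrangement ${\mathbb Z}_a^{(\omega)}\oplus{\mathbb Z}_b^{(\omega)}\cong{\mathbb Z}_{ab}^{(\omega)}$ for coprime $a,b$ collapses $\bigoplus_j{\mathbb Z}_{q_j}^{(\omega)}$ into a single ${\mathbb Z}_m^{(\omega)}$. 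If there is no extra summand, this is the first case of the statement. If there is one, let $m_0$ be the resulting modulus of the infinite part and $m_0'$ its odd part; using the Chinese Remainder Theorem (${\mathbb Z}_{2m_0}\cong{\mathbb Z}_{2^{n+1}}\oplus{\mathbb Z}_{m_0'}$), the embedding ${\mathbb Z}_{m_0'}\hookrightarrow{\mathbb Z}_{m_0}^{(\omega)}$ (valid since $m_0'\mid m_0$), and the absorption ${\mathbb Z}_{m_0}^{(\omega)}\oplus{\mathbb Z}_{m_0}^{(\omega)}\cong{\mathbb Z}_{m_0}^{(\omega)}$, one checks that ${\mathbb Z}_{m_0}^{(\omega)}\oplus{\mathbb Z}_{2^{n+1}}$ is bi-embeddable with ${\mathbb Z}_{m_0}^{(\omega)}\oplus{\mathbb Z}_{2m_0}$, which is the second case with $m = m_0$.

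The main obstacle I anticipate is the bookkeeping in this last step: one must verify that the extra cyclic summand produced by an involution in the $2$-part, together with the ${\mathbb Z}_{2^n}^{(\omega)}$-factor and all odd-primary factors, combines into \emph{exactly} the shape ${\mathbb Z}_m^{(\omega)}\oplus{\mathbb Z}_{2m}$ and not some more general ${\mathbb Z}_m^{(\omega)}\oplus{\mathbb Z}_{m'}$. Everything else is a direct application of Proposition~\ref{prop:group-involution}, Lemma~\ref{lem:group-product}, Lemma~\ref{lem:square}, and Lemma~\ref{lem:2}.
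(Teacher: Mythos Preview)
Your proposal is correct and follows essentially the same route as the paper: decompose into $p$-subgroups via Corollary~\ref{cor:class-p}, push the pseudo-Siggers polymorphism to each $p$-part via Lemma~\ref{lem:group-product}, apply Proposition~\ref{prop:group-involution} together with Lemmas~\ref{lem:square} and~\ref{lem:2} to each $p$-part, and then reassemble using the Chinese Remainder Theorem. The only cosmetic difference is in the final reassembly: the paper exhibits an explicit isomorphism ${\mathbb Z}^{(\omega)}_{p_1^{n_1}} \oplus {\mathbb Z}_{p_1^{n_1+1}} \oplus {\mathbb Z}^{(\omega)}_{p_2^{n_2}} \oplus \cdots \oplus {\mathbb Z}^{(\omega)}_{p_r^{n_r}} \simeq {\mathbb Z}_m^{(\omega)} \oplus {\mathbb Z}_{2m}$, whereas you argue bi-embeddability of ${\mathbb Z}_{m_0}^{(\omega)}\oplus{\mathbb Z}_{2^{n+1}}$ with ${\mathbb Z}_{m_0}^{(\omega)}\oplus{\mathbb Z}_{2m_0}$, which suffices for the statement and handles the ``bookkeeping obstacle'' you anticipated.
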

\begin{proof}
If $\bG$ is trivial then it is bi-embeddable with 
${\mathbb Z}_1^{(\omega)}$ and we are done. 
Otherwise, by Corollary~\ref{cor:class-p} the group $\bG$
is $\bG_{p_1} \oplus \cdots \oplus \bG_{p_r}$,  where $r \geq 1$, 
$p_1,\dots,p_r$ are primes,  and $\bG_{p_i}$ for $i \in \{1,\dots,r\}$ is 
a non-trivial $p_i$-subgroup of $\bG$. By Lemma~\ref{lem:group-product}, 
for each $p \in \{p_1,\dots,p_r\}$
the structure $(\bG_p,\neq)$ has a pseudo-Siggers polymorphism. 

By Proposition~\ref{prop:group-involution} we have 
$\bG_p^2 \hookrightarrow \bG_p$ 
or $\bG_p$ is bi-embeddable with $\bG_p \oplus \bG_p/\langle x \rangle$ for some involution $x \in G$ and $p=2$. In the first case, $\bG_p$ is 
bi-embeddable with ${\mathbb Z}^{(\omega)}_{p^n}$ for some $n \in \omega$ by Lemma~\ref{lem:square}. 
In the latter case, $\bG_p$ is bi-embeddable with ${\mathbb Z}_{2^n}^{(\omega)} \oplus {\mathbb Z}_{2^{n+1}}$ or with 
${\mathbb Z}_{2^n}^{(\omega)}$
for some $n \in \omega$ by Lemma~\ref{lem:2}. 
So we deduce that there are $n_1,\dots,n_r \in \{1,2,3,\dots\}$ such that $\bG$ is bi-embeddable with 
\begin{itemize}
\item 
${\mathbb Z}^{(\omega)}_{{p_1}^{n_1}} \oplus \cdots \oplus {\mathbb Z}^{(\omega)}_{{p_r}^{n_r}}$ or with 
\item ${\mathbb Z}^{(\omega)}_{p_1^{n_1}} \oplus {\mathbb Z}_{p_1^{n_1+1}} \oplus {\mathbb Z}^{(\omega)}_{{p_2}^{n_2}} \oplus \cdots \oplus 
{\mathbb Z}^{(\omega)}_{{p_r}^{n_r}}$ 
where $p_1 = 2$ and $p_i > 2$ for every $i \in \{2,\dots,r\}$. 
\end{itemize}
Let $m := p_1^{n_1} p_2^{n_2} \cdots p_r^{n_r}$.
In case (1), the group $\bG$ is isomorphic to  
${\mathbb Z}_m^{(\omega)}$. 
In case (2), we have 
\begin{align*}
{\mathbb Z}^{(\omega)}_{p_1^{n_1}} \oplus {\mathbb Z}_{p_1^{n_1+1}} \oplus {\mathbb Z}^{(\omega)}_{{p_2}^{n_2}} \oplus \cdots \oplus 
{\mathbb Z}^{(\omega)}_{{p_r}^{n_r}}
& \simeq 
 {\mathbb Z}^{(\omega)}_{{p_1}^{n_1}} \oplus \cdots \oplus 
{\mathbb Z}^{(\omega)}_{{p_r}^{n_r}}
\oplus {\mathbb Z}_{2^{n_1+1}} \oplus {\mathbb Z}_{{p_2}^{n_2}} \oplus \cdots \oplus 
{\mathbb Z}_{{p_r}^{n_r}}  \\
& \simeq 
{\mathbb Z}_m^{(\omega)} \oplus {\mathbb Z}_{2m}.\end{align*}
\end{proof}

\subsection{Polynomial-time tractable abelian groups}
Let $n \in \omega$. 
In this section we present a polynomial-time
algorithm for $\Csp({\mathbb Z}_{2n} \oplus {\mathbb Z}_n^{(\omega)},\neq)$. Continuing with the additive
notation, we let ${\mathbb Z}_k = \{0,1,\dots,k-1\}$ for each $k\in \omega$. So let $\Phi$ be a conjunction of atomic formulas of the form $x=y+z$ 
and of the form $x \neq y$ over a finite set of variables $V$. 
We would like to test whether $\Phi$ is satisfiable in ${\mathbb Z}_{2n} \oplus {\mathbb Z}_n^{(\omega)}$
for some fixed $n \geq 2$. 

\begin{remark}
Note that over ${\mathbb Z}_2$ every disequality $x \neq y$ can be translated into an equality $x = y+1$ and hence satisfiability of the entire system can be solved in polynomial time with Gaussian elimination. 
The same trick does not work for 
solvability in ${\mathbb Z}_n$ if $n \geq 3$, and indeed satisfiability of disequalities over ${\mathbb Z}_n$ is NP-complete for $n \geq 3$. 
\end{remark}

Linear equation systems over
${\mathbb Z}_{k}$, for any $k \in \omega$, can 
be solved in polynomial time~\cite{GoldmannRussell}. 
We need this algorithm for equation systems
over ${\mathbb Z}_{2n}$.
Alternatively, we can use a more general
algorithm of Bulatov and Dalmau for constraints preserved by a Maltsev operation~\cite{Maltsev}. 
Let $m \colon {\mathbb Z}_{2n}^3 \to {\mathbb Z}_{2n}$ be the Maltsev operation given by $(x,y,z) \mapsto x-y+z$. Observe that $m$ is idempotent, preserves the graph of addition, and also preserves the following relation 
$$R := \{(x_1,x_2) \in {\mathbb Z}^2_{2n} \mid x_1-x_2 = n\}.$$ To see this, let $(x_1,x_2),(y_1,y_2),(z_1,z_2) \in R$. Then $$m(x_1,y_1,z_1) - m(x_2,y_2,z_2) = x_1-x_2 - (y_1 - y_2) + z_1-z_2 = n-n+n=n.$$

\subsection*{The algorithm}
Let $\Phi_e$ be all the conjuncts in $\Phi$ that are equations, and let $\Phi_d$ be all the conjuncts in $\Phi$ that are disequalities. 
Our algorithm is the following.

\begin{enumerate}
\item Test for each disequality $x \neq y$ in $\Phi_d$ 
with Gaussian elimination whether $\Phi_e$ implies $x=y$ in ${\mathbb Z}_n$. 
Let $\Phi^*_d$ be the set of all disequalities where this is the case. 
\item For each inequality $x \neq y$ in $\Phi^*_d$,
add the constraint $R(x,y)$ to $\Phi_e$, and solve
the resulting instance of the CSP over ${\mathbb Z}_{2n}$ with the Bulatov-Dalmau algorithm for
Maltsev constraints. The algorithm accepts
if and only if the Maltsev instance is satisfiable. 
\end{enumerate} 

\begin{theorem}\label{thm:alg}
Let $n \geq 1$. 
Then the algorithm presented above solves
$\Csp({\mathbb Z}_{2n} \oplus {\mathbb Z}_n^{(\omega)},\neq)$ in polynomial time. 
\end{theorem}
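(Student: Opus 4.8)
The plan is to establish correctness of the algorithm in two directions, using the key observation that $\Th({\mathbb Z}_n^{(\omega)})$ is convex: since ${\mathbb Z}_n^{(\omega)}$ has the square embedding $\bG^2 \hookrightarrow \bG$ (it is isomorphic to its own square), Proposition~\ref{prop:convex} tells us that a system of equations together with disequalities $x_1 \neq y_1, \dots, x_m \neq y_m$ is satisfiable over ${\mathbb Z}_n^{(\omega)}$ iff each $\Phi_e \wedge x_i \neq y_i$ is. The same holds for $\bG = {\mathbb Z}_{2n}\oplus {\mathbb Z}_n^{(\omega)}$: I would first reduce to the convex component by noting that $\bG$ is bi-embeddable with $\bG \oplus \bG$ except for one ${\mathbb Z}_{2n}$ summand, or more directly observe that by Proposition~\ref{prop:newproduct} with $\bA = {\mathbb Z}_n^{(\omega)}$ (which is convex, and whose CSP-with-disequalities is in P by the Gaussian-elimination argument in the Remark, translating $x \neq y$ into an equation mod $2$ when $n$... — actually here I need to be careful, so instead I treat $\bG$ directly). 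The cleanest route: $\bG = {\mathbb Z}_{2n} \oplus {\mathbb Z}_n^{(\omega)}$, and ${\mathbb Z}_n^{(\omega)}$ is convex, so I will argue that $\Phi$ is satisfiable in $\bG$ iff there is a partition-free reduction to checking, for each single disequality $x_i\neq y_i$, whether $\Phi_e \wedge (x_i \neq y_i)$ is satisfiable in $\bG$.

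The core of the argument is the claim: \emph{$\Phi_e \wedge (x \neq y)$ is satisfiable in $\bG$ iff $\Phi_e$ does not imply $x = y$ over ${\mathbb Z}_n$, i.e., $x-y \notin n\mathbb{Z}$ is consistent with $\Phi_e$, \textbf{and} the system $\Phi_e \wedge R(x,y)$ is satisfiable over ${\mathbb Z}_{2n}$ — OR $\Phi_e$ already fails to imply $x=y$ over ${\mathbb Z}_n$.} More precisely: if $\Phi_e$ does not imply $x=y$ in ${\mathbb Z}_n$, then there is a solution in the ${\mathbb Z}_n^{(\omega)}$-part with $x \neq y$ (extend the ${\mathbb Z}_n$-solution into a fresh coordinate), so $\Phi_e \wedge x \neq y$ is satisfiable in $\bG$; these are exactly the disequalities \emph{not} placed in $\Phi_d^*$ in step (1). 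If $\Phi_e$ does imply $x=y$ in ${\mathbb Z}_n$, then in any solution over $\bG$ the ${\mathbb Z}_n^{(\omega)}$-components of $x$ and $y$ agree coordinatewise, so $x \neq y$ forces their ${\mathbb Z}_{2n}$-components $x_1, y_1$ to satisfy $x_1 \neq y_1$; but $\Phi_e$ implies $x_1 \equiv y_1 \pmod n$ (reducing the ${\mathbb Z}_{2n}$-equations mod $n$ gives the ${\mathbb Z}_n$-consequence), so $x_1 - y_1 = n$ in ${\mathbb Z}_{2n}$, i.e.\ $R(x_1,y_1)$ holds — and conversely a solution of $\Phi_e \wedge R(x,y)$ over ${\mathbb Z}_{2n}$ embeds into $\bG$ and satisfies $x \neq y$. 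This is why step (2) adds exactly the constraints $R(x,y)$ for $x \neq y \in \Phi_d^*$ and checks satisfiability over ${\mathbb Z}_{2n}$.

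Assembling: by convexity of the ${\mathbb Z}_n^{(\omega)}$-part I would show $\Phi$ is satisfiable in $\bG$ iff \emph{either} some disequality $x_i \neq y_i$ lies outside $\Phi_d^*$ (handled "for free" via a fresh ${\mathbb Z}_n$-coordinate) while simultaneously the remaining disequalities in $\Phi_d^*$ can all be satisfied — here I must be slightly careful that the "free" disequalities do not interfere, which they do not because each uses its own fresh direct-sum coordinate and these can be chosen disjointly — \emph{or} $\Phi_d^* = \Phi_d$ and $\Phi_e \wedge \bigwedge_{x\neq y \in \Phi_d^*} R(x,y)$ is satisfiable over ${\mathbb Z}_{2n}$. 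In all cases this is equivalent to: $\Phi_e \wedge \bigwedge_{x \neq y \in \Phi_d^*} R(x,y)$ is satisfiable over ${\mathbb Z}_{2n}$, because the "free" disequalities are automatically satisfiable in a compatible way whenever a ${\mathbb Z}_{2n}$-solution of the $R$-augmented system exists (lift it to $\bG$, then use fresh coordinates). This matches the algorithm's accept condition precisely. For the complexity bound: step (1) runs $|\Phi_d|$ Gaussian-elimination checks over ${\mathbb Z}_n$, each polynomial; step (2) is one run of the Bulatov--Dalmau algorithm~\cite{Maltsev} on a Maltsev-closed instance over ${\mathbb Z}_{2n}$ (Maltsev-closure was verified in the text: $m(x,y,z) = x-y+z$ preserves both the graph of $+$ and $R$), which is polynomial. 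Hence the overall algorithm is polynomial-time.

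\textbf{Main obstacle.} The delicate point is the bookkeeping around the "free" disequalities — those $x_i \neq y_i$ with $\Phi_e \not\models x_i = y_i$ over ${\mathbb Z}_n$ — and showing that discarding them (not adding any constraint for them in step (2)) is sound: I must verify that whenever the $R$-augmented ${\mathbb Z}_{2n}$-system has a solution, one can simultaneously satisfy \emph{all} the discarded disequalities by perturbing in finitely many fresh ${\mathbb Z}_n^{(\omega)}$-coordinates without breaking $\Phi_e$ or the already-satisfied disequalities in $\Phi_d^*$. This is where convexity (Proposition~\ref{prop:convex}, in the form that $\Age$ is closed under finite products, equivalently one may take a product of finitely many witnessing substructures) does the real work: one takes the ${\mathbb Z}_{2n}$-solution, views it inside $\bG$, and forms an appropriate finite direct sum / uses extra coordinates to separate each discarded pair, exactly as in the proof of Proposition~\ref{prop:newproduct}. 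Getting this gluing argument stated cleanly, rather than the routine verifications that $m$ is Maltsev or that reduction mod $n$ is a homomorphism ${\mathbb Z}_{2n} \to {\mathbb Z}_n$, is the crux.
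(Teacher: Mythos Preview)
Your proposal is correct and follows essentially the same approach as the paper's proof: split the disequalities into those forced by $\Phi_e$ over ${\mathbb Z}_n$ (the set $\Phi_d^*$, which must be witnessed in the ${\mathbb Z}_{2n}$-coordinate via the relation $R$) versus the ``free'' ones (each witnessed in a fresh ${\mathbb Z}_n$-coordinate), and then glue a ${\mathbb Z}_{2n}$-solution $r$ of the $R$-augmented system with individual ${\mathbb Z}_n$-solutions $t_1,\dots,t_k$ for the free disequalities into the single assignment $s(x)=(r(x),t_1(x),\dots,t_k(x))$. Your identification of the gluing step as the crux is accurate, and your soundness direction (projecting a $\bG$-solution to the first coordinate and checking that $R$ is forced) is exactly what the paper does; the opening detour through convexity of $\bG$ itself is unnecessary (and indeed $\bG^2\not\hookrightarrow\bG$), but you correctly drop it and proceed directly.
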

\begin{proof}
It is clear that the algorithm has a polynomial running time. 
To prove the correctness of this algorithm, first suppose that $\Phi$ has a solution $s \colon V \to  {\mathbb Z}_{2n} \oplus {\mathbb Z}^{(\omega)}_n$. Let $x \neq y$ be a disequality
from $\Phi^*_d$, and let $s(x) = (s_1(x),s_2(x),\dots)$ and $s(y) = (s_1(y),s_2(y),\dots)$. 
By the definition of $\Phi^*_d$, 
we must have that $s(x)_i = s(y)_i$ for $i \in \{1,2,\dots\}$
and $s(x)_1 = s(y)_1 \; (\text{mod } n)$. 
Since $s(x) \neq s(y)$ we must
have $s(x)_1 \neq s(y)_1$, and hence $s(x)_1 - s(y)_1 = n$ and 
$(s(x)_1,s(y)_1) \in R$. Therefore, if the Bulatov-Dalmau algorithm rejects, then our algorithm correctly rejects the input.
 
Conversely, suppose that the algorithm accepts. 
Hence, the input to the Maltsev constraints 
has a solution $r \colon V \to {\mathbb Z}_{2n}$. 
Let $\phi_1,\dots,\phi_k$ be the disequalities in $\Phi_d \setminus \Phi_d^*$.
For each $i \leq k$ there exists a solution
$t_i$ 
to $\Phi_e$ over ${\mathbb Z}_n$ such that 
$t_i$ satisfies $\phi_i$. 
We then construct a solution
$s \colon V \to {\mathbb Z}_{2n} \oplus {\mathbb Z}^k_n$ of $\Phi$ as follows: 
$$s(x) := (r(x),t_1(x),\dots,t_k(x))$$
which may naturally be viewed as a solution 
$s \colon V \to {\mathbb Z}_{2n} \oplus {\mathbb Z}^{(\omega)}_n$. 
The map $s$ satisfies $\Phi_e$ since each of 
$r,t_1,\dots,t_k$ does. 
Moreover, $s$ satisfies $\Phi_d^*$ since $r$ does.
Finally, $s$ satisfies $\phi_i$ for $i \in \{1,\dots,k\}$
since $t_i$ does.
\end{proof}

\subsection{The classification}
\label{sect:abelian}
We combine the results obtained in the previous sections to prove our complexity dichotomy for $\omega$-categorical abelian groups (Theorem~\ref{thm:abelian-groups}).  
The border is given by the existence
of a pseudo-Siggers polymorphism of the model companion of $(\bG,\neq)$. 
Then we strengthen the statement by
providing an exact characterisation of those
$\omega$-categorical abelian groups $\bG$
such that $(\bG,\neq)$
has a pseudo-Siggers polymorphism (Theorem~\ref{thm:group-pseudo-sig}). 
We finally prove that for abelian groups 
the two cases 
in Theorem~\ref{thm:BP} are disjoint, which provides yet another equivalent characterisation 
of the complexity border in terms of uniformly 
continuous minor-preserving maps to $\Proj$. 



\begin{proposition}\label{prop:NP}
If $\bG$ is an $\omega$-categorical abelian group then
$\Csp(\bG,\neq)$ is in NP. 
\end{proposition}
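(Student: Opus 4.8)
The plan is to exhibit, for ``yes''-instances, a polynomial-size certificate together with a polynomial-time verification procedure. The one real difficulty is that the domain of $\bG$ is infinite, so we cannot simply guess a solution; instead we observe that satisfiability of an instance $\Phi$ with variable set $V$ is always witnessed inside a subgroup of $\bG$ generated by at most $|V|$ elements, and — using the explicit description of $\omega$-categorical abelian groups from Corollaries~\ref{cor:class-p} and~\ref{cor:nf} — such a subgroup admits a polynomial-size description and can be recognised efficiently.

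Concretely, I would fix once and for all an isomorphism $\bG \cong \bigoplus_{j=1}^{\ell} {\mathbb Z}_{m_j}^{(s_j)}$ with the $m_j$ prime powers and $s_j \in \{1,2,\dots\}\cup\{\omega\}$ (Corollaries~\ref{cor:class-p},~\ref{cor:nf}), and let $e$ be the finite exponent of $\bG$. Given an instance $\Phi$ over $V$ with $N := |V|$, I may assume (after introducing auxiliary variables and equations, at a linear cost) that the atoms of $\Phi$ are of the forms $x=y+z$, $x=0$ and $x\neq y$. The core observation is
\[
\Phi \text{ is satisfiable in } \bG \iff \Phi \text{ is satisfiable in some subgroup } \bC\leq\bG \text{ generated by at most } N \text{ elements,}
\]
where ``$\Leftarrow$'' is trivial and ``$\Rightarrow$'' follows by taking $\bC := \langle s(v):v\in V\rangle$ for a solution $s$ and restricting $s$ to a map $V\to C$. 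Such a $\bC$ is a finite abelian group of exponent dividing $e$ with at most $N$ invariant factors, hence $\bC\cong\bigoplus_{i=1}^r {\mathbb Z}_{d_i}$ with $r\le N$ and each $d_i\mid e$; this is $O(N)$ bits of data. So the certificate for a ``yes''-instance is: such a tuple $(d_1,\dots,d_r)$, together with an assignment $\beta\colon V\to\bigoplus_{i=1}^r {\mathbb Z}_{d_i}$, each value being a vector of $r\le N$ residues below $e$, for a total of $O(N^2)$ bits. The verifier then (a) checks, against the fixed data $(m_j,s_j)_j$, that $\bigoplus_{i=1}^r {\mathbb Z}_{d_i}$ embeds into $\bG$, and (b) checks that $\beta$ satisfies every equation of $\Phi$ (componentwise modular arithmetic) and sends the two sides of every disequality to distinct elements; it accepts iff both succeed. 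Correctness is exactly the displayed equivalence: on acceptance, composing $\beta$ with an embedding $\bigoplus_i {\mathbb Z}_{d_i}\hookrightarrow\bG$ produces a solution in $\bG$, and conversely any solution in $\bG$ yields a valid certificate via its generated subgroup.

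The step I expect to be the main obstacle is making test (a) run in polynomial time: $\omega$-categoricity alone only gives that $\Age(\bG)$ has finitely many isomorphism types of each finite size, which does not suffice because the relevant sizes (up to $e^{N}$) are exponential in $N$. What rescues the argument is the structure theory of $\omega$-categorical abelian groups (Corollaries~\ref{cor:class-p} and~\ref{cor:nf}) together with the classical criterion for embeddability of a finite direct sum of cyclic groups into a (possibly infinite) direct sum of cyclic groups — after passing to $p$-primary components one sorts the exponent sequences non-increasingly (padding the shorter one with zeros) and checks that the first sequence is dominated termwise by the second — which is manifestly a polynomial-time computation against the fixed description of $\bG$. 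This is precisely where commutativity is used, and correspondingly why no such argument is available for general $\omega$-categorical groups.
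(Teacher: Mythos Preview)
Your argument is correct and takes a genuinely different route from the paper. You build an explicit NP certificate: guess the isomorphism type of the subgroup generated by a solution (at most $|V|$ invariant factors, each dividing the fixed exponent $e$) together with the assignment into it, then verify the atoms and check embeddability into $\bG$ via the classical termwise-domination criterion on sorted $p$-primary exponent sequences. This is self-contained and uses nothing beyond Corollaries~\ref{cor:class-p} and~\ref{cor:nf} plus elementary abelian-group theory. The paper instead first shows that every $\omega$-categorical abelian group is bi-embeddable with ${\mathbb Z}_n^{(\omega)}\oplus\bH$ for some finite $\bH$ (Lemma~\ref{lem:bi-embed}), and then applies Proposition~\ref{prop:newproduct}: since $\Csp({\mathbb Z}_n^{(\omega)},\neq)$ is in P and $({\mathbb Z}_n^{(\omega)})^2\hookrightarrow{\mathbb Z}_n^{(\omega)}$, there is a polynomial-time reduction from $\Csp(\bG,\neq)$ to $\Csp(\bH,\neq)$, which is trivially in NP because $\bH$ is finite. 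The paper's proof is shorter and yields the stronger conclusion that $\Csp(\bG,\neq)$ is polynomial-time many-one reducible to a finite-domain CSP, but it relies on the convexity machinery of Section~\ref{sect:square-emb} and on the polynomial-time solvability of $\Csp({\mathbb Z}_n^{(\omega)},\neq)$; your approach avoids both and would still work for a reader who has only the structure theorem in hand.
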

\begin{proof} By Theorem~\ref{thm:class-p} the group  $\bG$ can be written as $\bG_{p_1}\oplus \cdots \oplus \bG_{p_r}$, where $r\in \omega$ and $p_1,\dots,p_r$ are distinct primes, and $\bG_{p_i}$ for $i\in \{1,\dots,r\}$ is a non-trivial $p_i$-subgroup of $\bG$. 
By Lemma \ref{lem:bi-embed} each $\bG_{p_i}$ is  bi-embeddable with $\mathbb{Z}_{p_i^{n_i}}^{(\omega)} \oplus \bH_i$ for some $n_i\in \omega\cup \{0\}$ and finite abelian group $\bH_i$.
 Hence $\bG$ is bi-embeddable with $\mathbb{Z}_n^{(\omega)}\oplus \bH$, where $n= p_1^{n_1}p_2^{n_2}\cdots p_r^{n_r}$  and $\bH=\bH_1\oplus \bH_2 \oplus \cdots \oplus \bH_r$.
  Since $\Csp(\mathbb{Z}_n^{(\omega)},\neq)$ is in P and $(\mathbb{Z}_n^{(\omega)})^2 \hookrightarrow \mathbb{Z}_n^{(\omega)}$, it follows from Proposition \ref{prop:newproduct} that there is a polynomial-time reduction from $\Csp(\mathbb{Z}_n^{(\omega)}\oplus \bH,\neq)$ to $\Csp(\bH,\neq)$, and hence from $\Csp(\bG,\neq)$ to $\Csp(\bH,\neq)$. Since $\bH$ is finite, the result follows. 
\end{proof}

\begin{theorem}\label{thm:abelian-groups}
Let $\bG$ be an $\omega$-categorical abelian group and let $\bH$ be its model companion.
If $(\bH,\neq)$ has a pseudo-Siggers polymorphism, then 
$\Csp(\bG,\neq)$ is in P.
Otherwise, $\Csp(\bG,\neq)$ is NP-complete. 
\end{theorem}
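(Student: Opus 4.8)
The plan is to assemble the structural results established above; the argument splits according to whether $(\bH,\neq)$ has a pseudo-Siggers polymorphism.

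For the tractable direction I would argue as follows. Suppose $(\bH,\neq)$ has a pseudo-Siggers polymorphism. Since being an abelian group is expressed by universal sentences and a model companion satisfies the same universal sentences as the original structure, $\bH$ is again an abelian group; it is $\omega$-categorical because the model companion of an $\omega$-categorical structure is $\omega$-categorical. Hence Proposition~\ref{prop:pseudo-sig-abelian-group} applies to $\bH$ and shows that $\bH$ is bi-embeddable with $\mathbb{Z}_m^{(\omega)}$ or with $\mathbb{Z}_m^{(\omega)} \oplus \mathbb{Z}_{2m}$ for some $m \geq 1$ (both of these are $\omega$-categorical algebras). As $\bH$ is a companion of $\bG$, the two are bi-embeddable by Lemma~\ref{lem:c}, so $\bG$ is bi-embeddable with the same group, and by the equivalence of items (2) and (4) in Lemma~\ref{lem:c} the problem $\Csp(\bG,\neq)$ is literally the same computational problem as $\Csp(\mathbb{Z}_m^{(\omega)},\neq)$ or as $\Csp(\mathbb{Z}_{2m} \oplus \mathbb{Z}_m^{(\omega)},\neq)$. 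The first is in P by Gaussian elimination over $\mathbb{Z}_m$ (as used in the proof of Proposition~\ref{prop:NP}), and the second is in P by Theorem~\ref{thm:alg}.

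For the hard direction I would proceed as follows. Suppose $(\bH,\neq)$ has no pseudo-Siggers polymorphism. Since $(\bH,\neq)$ is the model-complete core of $(\bG,\neq)$ (see the Corollary following Lemma~\ref{lem:mc}) and is $\omega$-categorical, the Barto-Pinsker Theorem~\ref{thm:BP}, applied to the non-relational structure $(\bH,\neq)$ as permitted by Remark~\ref{rem:gen-sig}, gives that $\Pol(\bH,\neq)$ has a uniformly continuous minor-preserving map to $\Proj$. Because $(\bG,\neq)$ and $(\bH,\neq)$ are homomorphically equivalent by Lemma~\ref{lem:c}, the first item of Proposition~\ref{prop:coreh1} supplies a uniformly continuous minor-preserving map $\Pol(\bG,\neq) \to \Pol(\bH,\neq)$; composing the two maps (a composite of uniformly continuous minor-preserving maps is again of that form) yields a uniformly continuous minor-preserving map $\Pol(\bG,\neq) \to \Proj$, so $\Csp(\bG,\neq)$ is NP-hard by Theorem~\ref{thm:uch1}. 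Combined with Proposition~\ref{prop:NP}, which places $\Csp(\bG,\neq)$ in NP, this gives NP-completeness.

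Since the substantive work has already been done in Proposition~\ref{prop:pseudo-sig-abelian-group}, Theorem~\ref{thm:alg}, and Section~\ref{sect:ua}, this proof is mostly bookkeeping, and I expect the delicate points to be the passage between $\bG$ and its model companion $\bH$ — that $\bH$ is still an $\omega$-categorical abelian group, that $(\bH,\neq)$ is exactly the model-complete core of $(\bG,\neq)$, and that bi-embeddability of $\omega$-categorical algebras preserves $\Csp(\cdot,\neq)$ — together with getting the direction of the minor-preserving maps right in the hard case, so that the composite runs from $\Pol(\bG,\neq)$ to $\Proj$ as the hardness criterion of Theorem~\ref{thm:uch1} demands.
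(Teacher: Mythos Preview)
Your proposal is correct and follows essentially the same route as the paper: apply Proposition~\ref{prop:pseudo-sig-abelian-group} and Theorem~\ref{thm:alg} in the tractable case, and Theorem~\ref{thm:BP} together with Proposition~\ref{prop:NP} in the hard case, transferring between $\bG$ and $\bH$ via Lemma~\ref{lem:c}. You are simply more explicit than the paper about the intermediate steps (that $\bH$ remains an $\omega$-categorical abelian group, that the minor-preserving maps compose in the right direction, and that the case $\mathbb{Z}_m^{(\omega)}$ is handled directly by Gaussian elimination rather than by invoking Theorem~\ref{thm:alg}).
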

\begin{proof}
If $(\bH,\neq)$ does not have a pseudo-Siggers polymorphism, then $\Csp(\bH,\neq)$ and therefore also $\Csp(\bG,\neq)$ are NP-hard by Theorem~\ref{thm:BP}, and thus NP-complete by
Proposition~\ref{prop:NP}. 
Otherwise, Proposition~\ref{prop:pseudo-sig-abelian-group} implies that $\bH$ is bi-embeddable with ${\mathbb Z}_n^{(\omega)}$
or with ${\mathbb Z}_n^{(\omega)} \oplus {\mathbb Z}_{2n}$ for some $n \in \omega$. 
In this case the polynomial-time tractability of
$\Csp(\bH,\neq)$ and therefore also of $\Csp(\bG,\neq)$ follows from Theorem~\ref{thm:alg}. 
\end{proof}


The border between polynomial-time tractable and NP-hard cases can be described mathematically in several equivalent ways. 

\begin{theorem}\label{thm:group-pseudo-sig}
Let $\bG$ be an $\omega$-categorical abelian group. Then the following are equivalent. 
\begin{enumerate}
\item $(\bG,\neq)$ has a pseudo-Siggers polymorphism; 
\item $\bG$ is bi-embeddable with either ${\mathbb Z}_n^{(\omega)}$ or ${\mathbb Z}_n^{(\omega)} \oplus {\mathbb Z}_{2n}$ for some $n \geq 1$; 
\item the model-complete core of $(\bG,\neq)$ has a pseudo-Siggers polymorphism. 
\end{enumerate}
\end{theorem}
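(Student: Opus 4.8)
The plan is to prove $(1)\Rightarrow(2)$ via Proposition~\ref{prop:pseudo-sig-abelian-group}, to prove $(2)\Rightarrow(1)$ by a reduction to two model cases together with an explicit construction, and to deduce the equivalence of $(2)$ and $(3)$ from the behaviour of the model companion. Concretely, $(1)\Rightarrow(2)$ is exactly Proposition~\ref{prop:pseudo-sig-abelian-group}. For the model-companion interface, recall from the corollary following Lemma~\ref{lem:mc} that the model-complete core of $(\bG,\neq)$ is $(\bH,\neq)$, where $\bH$ is the model companion of $\bG$, and that by Lemma~\ref{lem:c} the structure $\bH$ is an $\omega$-categorical abelian group bi-embeddable with $\bG$. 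Hence $(3)\Rightarrow(2)$: if $(\bH,\neq)$ has a pseudo-Siggers polymorphism, Proposition~\ref{prop:pseudo-sig-abelian-group} applied to $\bH$ makes $\bH$, and therefore $\bG$, bi-embeddable with ${\mathbb Z}_m^{(\omega)}$ or ${\mathbb Z}_m^{(\omega)}\oplus{\mathbb Z}_{2m}$. And once $(2)\Rightarrow(1)$ is available it also gives $(2)\Rightarrow(3)$: a group satisfying $(2)$ has the same bi-embeddability type as its model companion $\bH$, so $\bH$ satisfies $(2)$ as well, and $(2)\Rightarrow(1)$ applied to $\bH$ yields a pseudo-Siggers polymorphism of $(\bH,\neq)$, which is $(3)$. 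So everything reduces to proving $(2)\Rightarrow(1)$, and then $(1)\Leftrightarrow(2)$ together with $(2)\Leftrightarrow(3)$ closes the argument.

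For $(2)\Rightarrow(1)$ I would first unwind the hypothesis with Corollary~\ref{cor:class-p}, Corollary~\ref{cor:nf} and Lemma~\ref{lem:bi-embed} (applied $p$-component by $p$-component): a group $\bG$ satisfying $(2)$ splits as $\bG\cong\bG_\infty\oplus\bK$ with $\bK$ finite, $\bK\hookrightarrow\bG_\infty$ and $\bG\hookrightarrow\bG_\infty$, where $\bG_\infty$ collects the summands of the form ${\mathbb Z}_{p^j}^{(\omega)}$ (together with the distinguished cyclic $2$-summand coming from the ${\mathbb Z}_{2m}$-part in the second case of $(2)$), and where $\bG_\infty$ either satisfies $\bG_\infty\times\bG_\infty\cong\bG_\infty$ (when $\bG$ is bi-embeddable with some ${\mathbb Z}_m^{(\omega)}$) or satisfies $\bG_\infty\cong\bG_\infty\times\bG_\infty/\langle i\rangle$ for the central involution $i$ generated by an element of order $2m$, with $\bG_\infty/\langle i\rangle$ isomorphic to each of its finite powers (when $\bG$ is bi-embeddable with some ${\mathbb Z}_m^{(\omega)}\oplus{\mathbb Z}_{2m}$). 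I would then prove a \emph{transfer lemma}: if $\bG\cong\bG_\infty\oplus\bK$, $\bG\hookrightarrow\bG_\infty$, and $(\bG_\infty,\neq)$ has a pseudo-Siggers polymorphism, then so does $(\bG,\neq)$. For this, take a pseudo-Siggers polymorphism $g\colon\bG_\infty^{\,6}\to\bG_\infty$ with endomorphisms $d_1,d_2\in\End(\bG_\infty,\neq)$, an embedding $u\colon\bG\hookrightarrow\bG_\infty$, and the inclusion $v\colon\bG_\infty\hookrightarrow\bG_\infty\oplus\bK=\bG$; then $\tilde s(x_1,\dots,x_6):=v\big(g(u(x_1),\dots,u(x_6))\big)$ is a polymorphism of $(\bG,\neq)$, and $d_1\oplus\mathrm{id}_\bK$ and $d_2\oplus\mathrm{id}_\bK$ are embeddings of $\bG$ witnessing the pseudo-Siggers identity for $\tilde s$, since $v\circ d_j=(d_j\oplus\mathrm{id}_\bK)\circ v$.

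With the transfer lemma in hand, $(2)\Rightarrow(1)$ reduces to proving that $(\bG_\infty,\neq)$ has a pseudo-Siggers polymorphism in each of the two cases. The first case follows from Lemma~\ref{lem:square-pseudo-sig} (whose construction in fact yields a pseudo-Siggers polymorphism of $(\bG_\infty,\neq)$, the isomorphism being a bijective embedding and the witnessing map an automorphism). The main obstacle is the second case: showing that if $\bG_\infty\cong\bG_\infty\times\bG_\infty/\langle i\rangle$ for a central involution $i$, with $\bG_\infty/\langle i\rangle$ absorbing its finite powers, then $(\bG_\infty,\neq)$ has a pseudo-Siggers polymorphism. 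Here I would imitate Lemma~\ref{lem:square-pseudo-sig} and reverse the analysis of Proposition~\ref{prop:group-involution}: fix an isomorphism realising $\bG_\infty=\Im\phi_1\oplus\Im\phi_2$ with $\phi_1\colon\bG_\infty\hookrightarrow\bG_\infty$ and $\phi_2\colon\bG_\infty/\langle i\rangle\hookrightarrow\bG_\infty$, and an isomorphism $(\bG_\infty/\langle i\rangle)^4\cong\bG_\infty/\langle i\rangle$ to house the four ``symmetric'' coordinates $2,4,5,6$. One then defines a homomorphism $f\colon\bG_\infty^{\,6}\to\bG_\infty$ of the form $f(x_1,\dots,x_6)=\sum_j f_j(x_j)$, with endomorphisms $f_j$ chosen so that the operations $f_{\{k,\ell\}}$ (in the notation preceding Proposition~\ref{prop:monoid-decomp}) satisfy the linear identities forced by $e_1 f(x,y,x,z,y,z)=e_2 f(y,x,z,x,z,y)$ for suitable injective endomorphisms $e_1,e_2$, and verifies separately that $f$ preserves $\neq$, that is, that $f(c_1,\dots,c_6)\neq 0$ whenever all $c_j\neq 0$.

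I expect this last verification to be the delicate step: the pseudo-Siggers equation forces a partial symmetry among the $f_j$ that is in tension with preservation of $\neq$, so the argument has to use the internal direct-sum decomposition $\bG_\infty=\Im\phi_1\oplus\Im\phi_2$ together with the $\langle i\rangle$-structure to exclude accidental cancellations. Once both cases are established, the transfer lemma gives $(2)\Rightarrow(1)$, hence $(1)\Leftrightarrow(2)$, and together with $(3)\Rightarrow(2)$ and $(2)\Rightarrow(3)$ above this shows that $(1)$, $(2)$ and $(3)$ are equivalent.
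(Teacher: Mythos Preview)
Your logical scaffolding is sound and in one respect cleaner than the paper's. The paper argues cyclically $(1)\Rightarrow(2)\Rightarrow(3)\Rightarrow(1)$, invoking for $(3)\Rightarrow(1)$ the general lift lemma from~\cite{canonical} (if the model-complete core of an $\omega$-categorical structure has a pseudo-Siggers polymorphism, so does the structure). Your transfer lemma is a correct and self-contained substitute: since $v\circ d_j=(d_j\oplus\mathrm{id}_{\bK})\circ v$, the map $\tilde s=v\circ g\circ u^{\,6}$ is indeed a pseudo-Siggers polymorphism of $(\bG,\neq)$, so you can bypass the lift lemma entirely. Your deduction of $(3)\Leftrightarrow(2)$ from $(1)\Leftrightarrow(2)$ applied to the model companion is also fine and avoids the paper's (tacit) appeal to model-completeness of ${\mathbb Z}_n^{(\omega)}\oplus{\mathbb Z}_{2n}$.

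The gap is exactly where you locate the ``main obstacle'': the second case of $(2)\Rightarrow(1)$. Your description (``defines a homomorphism $f=\sum_j f_j$ with endomorphisms $f_j$ chosen so that the $f_{\{k,\ell\}}$ satisfy the linear identities\dots'') is a restatement of the goal, not a construction, and the preservation-of-$\neq$ verification is precisely the place where a generic choice of the $f_j$ will fail. The paper does not proceed abstractly here; it writes down an explicit polymorphism on $\bG={\mathbb Z}_n^{(\omega)}\oplus{\mathbb Z}_{2n}$. With generators $(a_i)_{i\in\omega}$ of order $n$ and $b$ of order $2n$, one takes an isomorphism $g\colon(\bigoplus_i\langle a_i\rangle)^6\to\bigoplus_{i>6}\langle a_i\rangle$ for the ``${\mathbb Z}_n^{(\omega)}$-part'' and extends it by setting
\[
f_{\{k\}}(b)=\begin{cases} a_k+b & k\in\{1,4,5\},\\ a_k+2b & k\in\{2,3,6\}.\end{cases}
\]
The asymmetry $\{1,4,5\}$ versus $\{2,3,6\}$ is what makes both requirements compatible: the automorphism $\alpha$ swapping the coordinate pairs $(1,2),(3,4),(5,6)$ in $g$ and fixing $b$ witnesses the pseudo-Siggers identity, while the verification that $f$ preserves $\neq$ reduces to a parity argument modulo $2n$ showing that if $f(r_1b,\dots,r_6b)=f(s_1b,\dots,s_6b)$ then $r_jb=s_jb$ for some $j$. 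This step is genuinely delicate and is the substance of the proof; without a concrete choice of the $f_j$ and a computation like the one above, your sketch does not yet establish the second case.
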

\begin{proof}
The implication from $(1)$ to $(2)$ is Proposition~\ref{prop:pseudo-sig-abelian-group}.

To prove the implication from $(2)$ to $(3)$
it suffices to prove that ${\mathbb Z}_n^{(\omega)}$ and ${\mathbb Z}_n^{(\omega)} \oplus {\mathbb Z}_{2n}$ are model-complete and have a pseudo-Siggers polymorphism. Since $({\mathbb Z}_n^{(\omega)})^2$ is isomorphic to ${\mathbb Z}_n^{(\omega)}$, the structure $({\mathbb Z}_n^{(\omega)},\neq)$ has a pseudo-Siggers polymorphism by Lemma~\ref{lem:square-pseudo-sig}. Moreover, it is well-known and easy to see that ${\mathbb Z}_n^{(\omega)}$ is homogeneous, and therefore model-complete. 

Now let $\bG = {\mathbb Z}_n^{(\omega)} \oplus {\mathbb Z}_{2n}$. Let $(a_i)_{i \in \omega}$ be a sequence of elements of $G$ of order $n$ and let $b$ be an element of $G$ of order $2n$ such that $\bG = \bigoplus_{i \in \omega} \langle a_i \rangle \oplus \langle b \rangle$. We construct a pseudo-Siggers polymorphism $s$ of $\bG$ as follows. 
Let $g \colon \big (\bigoplus_{i \in \omega} \langle a_i \rangle \big)^6 \to \bigoplus_{i \in \omega \setminus \{1,\dots,6\}} \langle a_i \rangle$ be an isomorphism. 
Note that the map 
$h \colon \big (\bigoplus_{i \in \omega} \langle a_i \rangle \big)^6 \to \bigoplus_{i \in \omega \setminus \{1,\dots,6\}} \langle a_i \rangle$ given by $h(x_1,x_2,x_3,x_4,x_5,x_6) := g(x_2,x_1,x_4,x_3,x_6,x_5)$ is an isomorphism, too, and hence there exists an automorphism
$\alpha$ of $\bigoplus_{i \in \omega \setminus \{1,\dots,6\}} \langle a_i \rangle$
such that $\alpha(h(x_1,\dots,x_6)) = g(x_1,\dots,x_6)$. Note that for all 
$x,y,z \in \bigoplus_{i \in \omega} \langle a_i \rangle$
we have 
$$g(x,y,x,z,y,z) = \alpha(h(x,y,x,z,y,z)) = \alpha(g(y,x,z,x,z,y)).$$
Extend $g$ to a homomorphism $f \colon \bG^6 \to \bG$ by defining 
$$f_{\{k\}}(b) := \begin{cases} a_k + b & \text{ if } k \in \{1,4,5\} \\
a_k + 2b & \text{ if } k \in \{2,3,6\}
\end{cases}.$$ 
This fully determines $f$ by (\ref{eq:decomp})
and 
because $\bG$ is generated by $b$ and $(a_i)_{i \in \omega}$. 
We claim that $f$ preserves $\neq$. Indeed, 
suppose that
$f(x_1,\dots,x_6) = f(y_1,\dots,y_6)$.
Write $x_i$ as $(c_i,r_ib)$ and 
$y_i$ as $(d_i,s_ib)$ where $c_i$ and $d_i$  are elements of $\bigoplus_{i\in \omega} \langle a_i \rangle$ and $r_i,s_i \in \{0,\dots,2n-1\}$. 
From the definition of $f$ we then obtain that
$g(c_1,\dots,c_6)=g(d_1,\dots,d_6)$, which implies that $c_1=d_1,\dots,c_6=d_6$ since $g$ is injective. So $f(r_1b,\dots,r_6b) = f(s_1b,\dots,s_6b)$, and it suffices to show that
$r_i b = s_i b$ for some $i \in \{1,\dots,6\}$. 
Let $r := r_1+2r_2+2r_3+r_4+r_5+2r_6$
and $s := s_1+2s_2+2s_3+s_4+s_5+2s_6$. 
Then 
\begin{align*}
f(r_1b,\dots,r_6b) =  (r_1a_1 + \cdots + r_6a_6) + rb = (r_1a_1 + \cdots + r_6a_6) + sb =  f(s_1b,\dots,s_6b)
\end{align*}
forces $r-s = 0 \mod 2n$ and $r_i = s_i \mod n$ for every $i \in \{1,\dots,6\}$, say $r_i=s_i+k_in$. Then as $2r_i=2s_i \mod 2n$ we have 
$$ r-(2r_2+2r_3+2r_6) = r_1+r_4+r_5 = s_1 +s_4+s_5 = s-(2s_2+2s_3+2s_6) \mod 2n.$$ 
Hence $(k_1+k_4+k_5)n=0 \mod 2n$, so $k_1+k_4+k_5$ is even. One of $k_1,k_4$, and $k_5$ must  be even, say $k_1=2t$ is even (the other two cases can be shown analogously). 
Then $r_1 b = (s_1 + k_1n)b =  (s_1 + 2nt)b = s_1b$ 
since $2 n b =0$ and we are done.

Extend $\alpha$ by setting $\alpha(b) := b$ 
and $$\alpha(a_1,a_2,a_3,a_4,a_5,a_6) := (a_2,a_1,a_4,a_3,a_6,a_5);$$ again, 
this determines $\alpha$ on all of $G$. 
Then $\alpha$ witnesses that $f$ is a pseudo-Siggers polymorphism: for each $r \in \omega$ we have
\begin{align*}
\alpha f(rb,1,rb,1,1,1) & = \alpha(a_1 + rb + a_3 +2rb) = \alpha(a_1 + a_3 + 3rb) \\
& = a_2 + a_4 + 3rb = a_2 + 2rb + a_4 + rb
= f(1,rb,1,rb,1,1)
\end{align*}
and it follows that $\alpha f_{\{1,3\}} = f_{\{2,4\}}$. 
Similarly, we get $\alpha f_{\{2,5\}} = f_{\{2,5\}}$
and $\alpha f_{\{4,6\}} = f_{\{4,6\}}$,
and hence $f$ is pseudo-Siggers by 
Proposition~\ref{prop:monoid-pseudo-sig}. 


Finally, the implication from $(3)$ to $(1)$ is a well-known general fact for $\omega$-categorical structures that follows from the lift lemma presented in~\cite{canonical} (see~\cite{BartoPinskerDichotomy}): if the model-complete core of an $\omega$-categorial structure $\bB$ has a pseudo-Siggers polymorphism, then so has $\bB$. 
\end{proof} 

Item (3) in Theorem~\ref{thm:group-pseudo-sig}
is the condition from the first infinite-domain tractability conjecture (see~\cite{BartoPinskerDichotomy}). 
We already know from Theorem~\ref{thm:abelian-groups}, Theorem~\ref{thm:group-pseudo-sig},  
and from Theorem~\ref{thm:uch1} 
that if $\bG$ is an $\omega$-categorical
abelian group such that $(\bG,\neq)$ has a pseudo-Siggers polymorphism, then 
$\Pol(\bG,\neq)$ cannot have a uniformly
continuous minor-preserving map to $\Proj$,
unless P=NP. It is surprisingly difficult to verify
this also without the complexity-theoretic assumption; however, by bounding
the orbit growth\footnote{We thank Michael Kompatscher for discussing the growth rate of ${\mathbb Z}^{(\omega)}_2$.} of $(\bG,\neq)$ this follows from
a result of~\cite{BKOPP}, as we will see below. 

\begin{proposition}\label{prop:abelian-group}
Let $\bG$ be an $\omega$-categorical abelian group. Then $(\bG,\neq)$  
has 
a pseudo-Siggers polymorphism
if and only if $\Pol(\bG,\neq)$ has no
uniformly continuous minor-preserving map
to $\Proj$. 
\end{proposition}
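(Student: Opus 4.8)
The plan is to prove the two implications separately, starting by reducing both of them to the model-complete core. Let $\bH$ be the model companion of $\bG$, so that $(\bH,\neq)$ is the model-complete core of $(\bG,\neq)$ by the corollary following Lemma~\ref{lem:mc}. Since $(\bG,\neq)$ and $(\bH,\neq)$ are homomorphically equivalent, Proposition~\ref{prop:coreh1} (valid for structures with operations by Remark~\ref{rem:gen-sig}) yields uniformly continuous minor-preserving maps between $\Pol(\bG,\neq)$ and $\Pol(\bH,\neq)$ in both directions; as such maps compose, $\Pol(\bG,\neq)$ has a uniformly continuous minor-preserving map to $\Proj$ if and only if $\Pol(\bH,\neq)$ does. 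Moreover $(\bG,\neq)$ has a pseudo-Siggers polymorphism if and only if $(\bH,\neq)$ does, by the equivalence of (1) and (3) in Theorem~\ref{thm:group-pseudo-sig}. So it suffices to prove the statement for $(\bH,\neq)$. The implication from ``no uniformly continuous minor-preserving map to $\Proj$'' to ``pseudo-Siggers polymorphism'' is then immediate: $(\bH,\neq)$ is an $\omega$-categorical model-complete core, so the Barto--Pinsker Theorem~\ref{thm:BP} (again via Remark~\ref{rem:gen-sig}) gives two alternatives for it, the second of which is by assumption false, hence the first holds. This direction needs no control on orbit growth.

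For the converse I would assume that $(\bH,\neq)$ has a pseudo-Siggers polymorphism and show that $\Pol(\bH,\neq)$ has no uniformly continuous minor-preserving map to $\Proj$. By Theorem~\ref{thm:group-pseudo-sig} the group $\bH$ is bi-embeddable with ${\mathbb Z}_m^{(\omega)}$ or with ${\mathbb Z}_m^{(\omega)}\oplus{\mathbb Z}_{2m}$ for some $m\geq 1$; in particular $\Age(\bH)$ consists of finite abelian groups of some fixed bounded exponent $e$. The key tool is the result of~\cite{BKOPP} stating that for an $\omega$-categorical structure whose number of $\Aut$-orbits of $n$-tuples grows slowly enough, the existence of a uniformly continuous minor-preserving map to $\Proj$ is incompatible with the existence of a pseudo-Siggers polymorphism; so all that remains is to check that $(\bH,\neq)$ has sufficiently slow orbit growth. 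Since $\neq$ is quantifier-free definable from equality, $\Aut(\bH,\neq)=\Aut(\bH)$ and the orbit count for $(\bH,\neq)$ equals that for $\bH$. I would bound the latter by showing that the $\Aut(\bH)$-orbit of an $n$-tuple $(g_1,\dots,g_n)$ is determined by a bounded amount of linear-algebraic data attached to the homomorphism ${\mathbb Z}_e^n\to\bH$ that sends the $i$-th standard generator to $g_i$: if $\bH$ is bi-embeddable with ${\mathbb Z}_m^{(\omega)}$ this is transparent from homogeneity (the orbit is then just the kernel of that homomorphism, a subgroup of ${\mathbb Z}_e^n$), and in general it follows from the model theory of modules, every formula being equivalent to a Boolean combination of primitive positive formulas. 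Either way one gets a bound of $2^{O(n^2)}$ on the number of orbits of $n$-tuples, matching the known rate $2^{\Theta(n^2)}$ for ${\mathbb Z}_2^{(\omega)}$, which is comfortably below the threshold in~\cite{BKOPP}.

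The step I expect to be the main obstacle is the last one: extracting from~\cite{BKOPP} the precise orbit-growth hypothesis and verifying that the $2^{O(n^2)}$ bound meets it. The orbit-count estimate itself also deserves care, since ${\mathbb Z}_m^{(\omega)}\oplus{\mathbb Z}_{2m}$ need not be homogeneous --- for example $2\cdot({\mathbb Z}_2^{(\omega)}\oplus{\mathbb Z}_4)$ is a two-element set definable without parameters, so an order-$2$ element lying in it and one not lying in it have the same atomic type but different orbits --- so in the $\oplus{\mathbb Z}_{2m}$ case the bound should be argued through the module-theoretic fact above rather than through atomic types alone. Everything else (the reduction to the model-complete core, the two uniformly continuous minor-preserving maps, and the appeals to Theorems~\ref{thm:BP} and~\ref{thm:group-pseudo-sig}) is routine given the results already established in the paper.
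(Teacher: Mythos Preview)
Your proof is correct and follows the same strategy as the paper: pass to the model-complete core $(\bH,\neq)$, invoke Barto--Pinsker for one implication, and for the other use the result from~\cite{BKOPP-equations} that a uniformly continuous minor-preserving map to $\Proj$ together with sub-doubly-exponential orbit growth rules out a pseudo-Siggers polymorphism. The only difference lies in how the orbit-growth bound is obtained: where you sketch a direct module-theoretic count (and rightly flag it as the step needing the most care, especially in the non-homogeneous ${\mathbb Z}_m^{(\omega)}\oplus{\mathbb Z}_{2m}$ case), the paper instead observes that ${\mathbb Z}_n^{(\omega)}$ and ${\mathbb Z}_n^{(\omega)}\oplus{\mathbb Z}_{2n}$ are totally categorical, hence $\omega$-stable, and then cites Steitz's theorem that every $\omega$-categorical $\omega$-stable structure has at most $2^{mn^2}$ orbits of $n$-tuples --- this citation chain sidesteps the hands-on computation entirely.
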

\begin{proof}
Let $\bH$ be the model companion of $\bG$. 
First suppose that $(\bG,\neq)$ has no
uniformly continuous minor-preserving map
to $\Proj$. 
Then neither has $(\bH,\neq)$, by Proposition~\ref{prop:coreh1},
and so $(\bH,\neq)$ has a pseudo-Siggers polymorphism by Theorem~\ref{thm:BP}.
Hence, $(\bG,\neq)$ has a pseudo-Siggers polymorphism by Theorem~\ref{thm:group-pseudo-sig}. 

Now suppose that $(\bG,\neq)$ has a
uniformly continuous minor-preserving map
to $\Proj$.
It is known that if $\bB$ is an $\omega$-categorical structure with a uniformly continuous  minor-preserving map to $\Proj$ and if
the number of orbits
of $n$-tuples of $\Aut(\bB)$ 
grows slower than doubly exponentially,
then $\bB$ cannot have a pseudo-Siggers polymorphism~\cite{BKOPP-equations}. 
Steitz~\cite{Steitz} 
proved that
for every $\omega$-categorical $\omega$-stable
structure $\bB$ there exists $m \in \omega$ such that

the number of orbits of $n$-tuples of $\Aut(\bB)$
is smaller than $2^{mn^2}$.
The groups 
${\mathbb Z}_n^{(\omega)}$ 
and ${\mathbb Z}_n^{(\omega)} \oplus {\mathbb Z}_{2n}$ 
are  for every $n \in \omega$ totally categorical:
the models of cardinality $\kappa$ are clearly  isomorphic to ${\mathbb Z}_n^{(\kappa)}$ 
or to ${\mathbb Z}_n^{(\kappa)} \oplus {\mathbb Z}_{2n}$, respectively, 
and hence in particular $\omega$-stable  (see~\cite{Tent-Ziegler}). 
\end{proof}

\section{Semilattices} 
\label{sect:semilattices}
A \emph{semilattice} is an algebra $(S;\wedge)$ where 
$\wedge$ is a binary operation that is associative, commutative, and idempotent. 
For $a,b \in A$ we define $a \leq b$ iff $a = ab = ba$. Clearly, $(A;\leq)$ is a partial order and polymorphisms of $(A;\wedge)$ 
are monotone with respect to $\leq$. 
Note that semilattices with a greatest element 
$\perp$ are special monoids (where the greatest element takes the role of $1$). 
As in the case of monoids, 
we often omit the symbol $\wedge$ and write $ab$ instead of $a \wedge b$. 

\begin{example}\label{expl:sn}
Let $\bP_n$ be the Boolean algebra with the atoms $\{1,\dots,n\}$. Clearly, 
the $\{\wedge\}$-reduct $\bS_n$ of $\bP_n$ is a semilattice. It is well-known that every finite semilattice 
$(S;\wedge)$ has the following embedding $e$ into $\bS_n$, for 
$n := |S|$, which we recall here for the convenience of the reader: 
if $b$ is any bijection between $S$ and $\{1,\dots,n\}$, define
$$e(x) := \bigvee_{y \leq x} b(y).$$
This map is injective:
if $e(x_1) \leq e(x_2)$
then in particular 
$b(x_1) \leq e(x_2)$
and since $b(x_1)$ is an atom 
we must have 
$b(x_1) \leq b(y)$ for some
$y \leq x_2$. 
Since $b(y)$ is an atom we must
have $x_1 = y$ and hence
$x_1 \leq x_2$. Together with the symmetric argument we obtain
that $e(x_1) = e(x_2)$ 
implies that $x_1=x_2$. 

Moreover, $e$ preserves $\wedge$:  let $x_1,x_2,x_3 \in S$ be such that $x_1 \wedge x_2 = x_3$.
Let $a$ be an atom of $\bS_n$
and $c \in S$ be such that $b(c) = a$. 
Then 
\begin{align*}
& a \leq e(x_1) \wedge e(x_2) \\
\text{ if and only if } \quad & 
a \leq e(x_1) \text { and } a \leq e(x_2) \\
\text{ if and only if } \quad & c \leq x_1 \text{ and } c \leq x_2 \\
\text{ if and only if } \quad & c \leq (x_1 \wedge x_2) = x_3 \\
\text{ if and only if } \quad & a \leq e(x_3). 
\end{align*}
This shows that
$e(x_1) \wedge e(x_2) = e(x_3)$ and
concludes the proof. 
\demo
\end{example}

\begin{example}\label{expl:u}
The class $\mathcal L$ of all finite semilattices 
forms an amalgamation class. To see this, 
suppose that $\bB_1$ and $\bB_2$ are
two finite semilattices such that $B_1 \cap B_2$ 
is the domain of a subsemilattice $\bA$ of both
$\bB_1$ and $\bB_2$. We have to prove that there exists a finite semilattice $\bC$ and embeddings $e_1 \colon \bB_1 \hookrightarrow \bC$ and $e_2 \colon \bB_2 \hookrightarrow \bC$
such that $e_1(a) = e_2(a)$ for all $a \in A$. 
Note that the poset $(A;\leq)$ induced by 
$\bA$ is a subposet of the posets $(B_1;\leq)$
and $(B_2;\leq)$ induced by $\bB_1$ and $\bB_2$, respectively. Let $\bC$ be the 
Dedekind-McNeille completion~\cite{MacNeille} of 
the poset amalgam of $(B_1;\leq)$
and $(B_2;\leq)$. Then $\bC$ has the required properties. 


We write 
$\bU$ for the Fra\"{i}ss\'e-limit of $\mathcal L$, i.e., for the up to isomorphism unique countable universal homogeneous semilattice, studied e.g.~in~\cite{DrosteKuskeTruss} where a finite axiomatisation of its first-order theory is presented. 
Note that $\bU$ is 
 $\omega$-categorical, because it is uniformly locally finite: in the subalgebra of $\bU$ 
 generated by 
 $u_1,\dots,u_n$ there are precisely
 the elements of the form $\bigwedge_{u \in V}$
 for a subset $V$ of $\{u_1,\dots,u_n\}$,
and hence their number is bounded by $2^n$. 
Also note that $\mathcal L$ is closed under taking finite direct products, and it follows from Proposition~\ref{prop:convex} 
that $\bU^2 \hookrightarrow \bU$.
\demo 
\end{example} 

$\Csp(\bU,\neq)$ might be viewed as a special case of Horn-Horn set constraints~\cite{BodHils}, which might be solved in polynomial time. 
We do not want to introduce Horn-Horn sets constraints here, but for the convenience show
how to derive a polynomial-time algorithm for
$\Csp(\bU,\neq)$ from 
Fact 24 in~\cite{BodHils}, which implies the following. 

\begin{lemma}\label{lemma:U}  Let $\phi=x\neq y \wedge \psi$ by a primitive positive formula of $(\bU,\neq)$ where $\psi$ does not contain formulas involving $\neq$. Then $\phi$ is satisfiable if and only if either  $\psi \wedge x=0 \wedge y=1$ or $\psi \wedge x=1 \wedge y=0$
is satisfiable in $(\{0,1\};\wedge,0,1)$.
\end{lemma}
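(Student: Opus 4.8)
The plan is to prove the equivalence directly, relying only on two facts: that $\bU$ is a semilattice whose age consists of \emph{all} finite semilattices, and that semilattice homomorphisms (in particular embeddings) preserve every primitive positive formula without disequalities, hence preserve $\psi$. This makes the lemma self-contained; as noted above, it is also an instance of Fact~24 in~\cite{BodHils}, and one could alternatively just invoke that.

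For the implication from right to left, I would suppose that $s\colon V\to\{0,1\}$ satisfies $\psi\wedge x=0\wedge y=1$ in $(\{0,1\};\wedge,0,1)$ (the other disjunct being symmetric). Since $\Age(\bU)$ contains the two-element semilattice, there is an embedding $e\colon(\{0,1\};\wedge)\hookrightarrow\bU$. Then $e\circ s\colon V\to U$ still satisfies $\psi$, and $e(s(x))=e(0)\neq e(1)=e(s(y))$ because $e$ is injective; hence $e\circ s$ witnesses that $\phi=x\neq y\wedge\psi$ is satisfiable in $(\bU,\neq)$.

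For the converse, let $h\colon V\to U$ satisfy $\phi$, so $h$ satisfies $\psi$ and $h(x)\neq h(y)$. Because $\leq$ is a partial order we cannot have both $h(x)\leq h(y)$ and $h(y)\leq h(x)$, so without loss of generality $h(x)\not\leq h(y)$. The key step is to collapse $\bU$ onto the two-element semilattice via the characteristic map $g\colon\bU\to(\{0,1\};\wedge)$ of the principal filter $\{u\in U\mid h(x)\leq u\}$, sending that filter to $1$ and its complement to $0$. Since meet is a greatest lower bound, $h(x)\leq u\wedge v$ holds if and only if $h(x)\leq u$ and $h(x)\leq v$, which is exactly what is needed for $g$ to be a $\wedge$-homomorphism. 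Then $g\circ h\colon V\to\{0,1\}$ satisfies $\psi$, with $g(h(x))=1$ and $g(h(y))=0$ (the latter because $h(x)\not\leq h(y)$), so $\psi\wedge x=1\wedge y=0$ is satisfiable in $(\{0,1\};\wedge,0,1)$. Symmetrically, if instead $h(y)\not\leq h(x)$, the principal filter of $h(y)$ yields a satisfying assignment of $\psi\wedge x=0\wedge y=1$.

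The only point that needs a careful, though entirely routine, check is that the characteristic map of a principal filter of a semilattice is a homomorphism onto $(\{0,1\};\wedge)$; this is precisely the two implications of the greatest-lower-bound property noted above, together with the symmetric bookkeeping that swaps $x$ with $y$ and $0$ with $1$ in the two cases. Everything else is the standard preservation of positive formulas under semilattice homomorphisms, so I do not anticipate any real obstacle.
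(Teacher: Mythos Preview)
Your proof is correct. The paper itself does not give a proof of this lemma at all; it simply states that the lemma is implied by Fact~24 in~\cite{BodHils} (a result about Horn-Horn set constraints) and moves on. Your argument is therefore a genuinely different, and strictly more informative, route: it is self-contained and elementary, using only that primitive positive $\{\wedge\}$-formulas are preserved by semilattice homomorphisms, together with the observation that the characteristic function of a principal filter $\{u\mid a\le u\}$ is a homomorphism onto $(\{0,1\};\wedge)$. The citation-based approach has the advantage of situating the lemma in a broader framework, but your direct argument is exactly what a reader who does not have~\cite{BodHils} to hand would want, and it makes transparent why the two-element semilattice suffices. Nothing is missing; the filter-homomorphism check and the preservation of pp-formulas under homomorphisms are both standard and you have stated them accurately.
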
 
%
%

\begin{proposition}\label{prop:alg} 
$\Csp(\bU,\neq)$ can be solved in polynomial time. 
\end{proposition}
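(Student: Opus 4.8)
The plan is to reduce $\Csp(\bU,\neq)$ to polynomially many satisfiability questions over the two-element semilattice $(\{0,1\};\wedge,0,1)$, each of which is an instance of a Boolean constraint satisfaction problem that can be solved in polynomial time, and to glue these together using convexity and Lemma~\ref{lemma:U}.

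First I would normalise the input. An instance of $\Csp(\bU,\neq)$ is a conjunction of atomic $\{\wedge\}$-formulas together with disequalities; introducing auxiliary variables to eliminate nested terms (as in the introduction) we may assume in time linear in the input size that the instance has the shape
\[ \Phi \;=\; \psi \,\wedge\, \bigwedge_{i=1}^{m} (x_i \neq y_i), \]
where $\psi$ is a conjunction of atomic formulas of the form $u = v \wedge w$ or $u = v$, and each $x_i, y_i$ occurs in $\psi$ (after possibly adding trivial equations $x_i = x_i$ to $\psi$). Since $\bU^2 \hookrightarrow \bU$ (Example~\ref{expl:u}), the theory $\Th(\bU)$ is convex by Proposition~\ref{prop:convex}, so $\Phi$ is satisfiable in $\bU$ if and only if $\psi \wedge (x_i \neq y_i)$ is satisfiable in $\bU$ for every $i \in \{1,\dots,m\}$. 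If $m = 0$ then $\Phi = \psi$, which is satisfiable by mapping all variables to one element since $\wedge$ is idempotent, so the algorithm may accept immediately.

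Next, for each $i$ I would apply Lemma~\ref{lemma:U} to $\psi \wedge (x_i \neq y_i)$ (viewed as a primitive positive sentence by existentially quantifying all variables): it is satisfiable in $(\bU,\neq)$ if and only if at least one of $\psi \wedge x_i = 0 \wedge y_i = 1$ and $\psi \wedge x_i = 1 \wedge y_i = 0$ is satisfiable in $(\{0,1\};\wedge,0,1)$. Each of these is an instance of $\Csp(\{0,1\};\wedge,0,1)$: a conjunction of atomic formulas over a fixed two-element structure. Each conjunct $u = v \wedge w$ over $\{0,1\}$ is equivalent to the Horn clauses $u \to v$, $\;u \to w$, $\;\neg v \vee \neg w \vee u$, and the constants $0,1$ are unit clauses, so this is an instance of \textsc{Horn-Sat} and is solvable in polynomial (in fact linear) time; alternatively one invokes Schaefer's theorem, since every relation involved is preserved by the semilattice operation on $\{0,1\}$.

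The resulting algorithm is therefore: normalise $\Phi$; for each $i \in \{1,\dots,m\}$ run the \textsc{Horn-Sat} solver on the two instances produced by Lemma~\ref{lemma:U}; accept if and only if for every $i$ at least one of the two is satisfiable. This runs in polynomial time, and its correctness is precisely the combination of Proposition~\ref{prop:convex} with Lemma~\ref{lemma:U}. I do not expect a genuine obstacle here: the substantive content is already packaged in Lemma~\ref{lemma:U} (which rests on Fact~24 of~\cite{BodHils}), and the only points needing care are the convexity reduction to a single disequality at a time (handed to us by $\bU^2 \hookrightarrow \bU$) and the tractability of $\Csp(\{0,1\};\wedge,0,1)$.
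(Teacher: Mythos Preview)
Your proposal is correct and follows essentially the same approach as the paper: reduce to a single disequality via convexity ($\bU^2\hookrightarrow\bU$ and Proposition~\ref{prop:convex}), apply Lemma~\ref{lemma:U} to pass to the two-element semilattice, and invoke polynomial-time solvability of $\Csp(\{0,1\};\wedge,0,1)$. Your version is slightly more detailed (explicit normalisation, the $m=0$ case, the Horn-Sat justification), but the argument is the same.
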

\begin{proof}
Since $\bU^2 \hookrightarrow \bU$ (Proposition~\ref{prop:convex}) 
it suffices to consider the situation that $\phi$ is of the form $x \neq y \wedge \psi$ where $\psi$ does not contain
formulas involving $\neq$.  By the previous lemma  
$\phi$ is satisfiable if and only if either $\psi \wedge x=0 \wedge y=1$ or $\psi \wedge x=1 \wedge y=0$
is satisfiable in $(\{0,1\};\wedge,0,1)$. However $\Csp(\{0,1\};\wedge,0,1)$ can be solved in polynomial time (see, e.g., \cite{KlimaTessonTherien}), to which the result follows. 
\end{proof}

In this section we prove the following dichotomy result.

\begin{theorem}\label{thm:semilattice}
Let 
$\bS$ 
be a countable $\omega$-categorical semilattice.  Then either 
\begin{enumerate}
\item there is a uniformly continuous
minor-preserving from $\Pol(\bS,\neq)$ to the clone of projections, in which case $\Csp(\bS,\neq)$ is NP-hard, or
\item the model-companion $\bC$ of $\bS$ is isomorphic to $\bU$. 
In this case $\Pol(\bS,\neq)$ is in P.
\end{enumerate}
\end{theorem}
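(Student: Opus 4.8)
The plan is to dispose of the two ``in which case'' assertions first (these are the easy directions) and then prove exhaustiveness, which is where the work lies. If~(1) holds, then $\Csp(\bS,\neq)$ is NP-hard by Theorem~\ref{thm:uch1}, using Remark~\ref{rem:gen-sig} to cover the functional signature of $(\bS,\neq)$. If~(2) holds, then $\bS$ and $\bU$ are companions (the model companion of $\bS$ being $\bU$), so $\Csp(\bS,\neq)=\Csp(\bU,\neq)$ by Lemma~\ref{lem:c}, which is in P by Proposition~\ref{prop:alg}. For exhaustiveness I would fix $\bS$, let $\bC$ be its model companion — an $\omega$-categorical semilattice, a companion of $\bS$, with $(\bC,\neq)$ the model-complete core of $(\bS,\neq)$ — and assume~(1) fails. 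Then $\Pol(\bC,\neq)$ also has no uniformly continuous minor-preserving map to $\Proj$, by Proposition~\ref{prop:coreh1} and closure of such maps under composition, so Theorem~\ref{thm:BP} (with Remark~\ref{rem:gen-sig}) supplies a pseudo-Siggers polymorphism $s$ of $(\bC,\neq)$; the goal is to conclude $\bC\cong\bU$. (We may assume $\bS$, hence $\bC$, is non-trivial, the one-element case being easily disposed of.)

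Next I would reduce ``$\bC\cong\bU$'' to a structural statement. Since $\bU$ is homogeneous, hence model-complete, and $\bC$ is model-complete, Lemma~\ref{lem:c} reduces this to $\Age(\bC)=\Age(\bU)=$ the class of all finite semilattices; the inclusion $\subseteq$ is immediate, so only $\mathcal L\subseteq\Age(\bC)$ needs proof. By Example~\ref{expl:sn} every finite semilattice embeds into $\bS_n\cong\bS_1^{\,n}$ for a suitable $n$, and $\bS_1$ embeds into the non-trivial $\bC$ via any comparable pair; hence it suffices to prove $\bC^2\hookrightarrow\bC$, since then Proposition~\ref{prop:convex} makes $\Age(\bC)$ closed under finite direct products and $\bS_1^{\,n}\in\Age(\bC)$ for every $n$ follows.

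The remaining task, proving $\bC^2\hookrightarrow\bC$ from the pseudo-Siggers polymorphism $s$, is the heart of the argument. By Lemma~\ref{lem:ps}, $s$ satisfies identity~(\ref{eq:siggers}). The natural candidate is the binary operation $g(x,y):=s(x,y,x,y,y,y)$, which is a $\wedge$-homomorphism $\bC^2\to\bC$ and preserves $\neq$: if $x_1\neq x_2$ and $y_1\neq y_2$ then the $6$-tuples $(x_1,y_1,x_1,y_1,y_1,y_1)$ and $(x_2,y_2,x_2,y_2,y_2,y_2)$ differ in every coordinate, and so are separated by $s$. The remaining, and main, difficulty is to upgrade ``$g$ preserves $\neq$'' to ``$g$ is injective'', i.e.\ to show that $g$ also separates pairs agreeing in one coordinate. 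Here I would pass to the monoid setting — adjoining a greatest element to obtain an $\omega$-categorical monoid, to which the decomposition of Section~\ref{sect:monoids} applies — apply Proposition~\ref{prop:monoid-decomp} with the partition $\{1,3\}\cup\{2,5\}\cup\{4,6\}$ of $\{1,\dots,6\}$ to get an $I$ with $s_I$ a self-embedding, and then analyse the polymorphism $(x,y)\mapsto s_I(x)\wedge s_{\{1,\dots,6\}\setminus I}(y)$ together with~(\ref{eq:siggers}), in close analogy with the group argument of Proposition~\ref{prop:group-involution} — with semilattice congruences playing the role of kernels — before transferring the resulting embedding back to $\bC$.

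I expect the main obstacle to be precisely this last step. For groups the ``complementary'' map $s_{\{1,\dots,6\}\setminus I}$ is automatically well-behaved once $\neq$ is preserved, whereas for semilattices it may fail to be injective, and the ensuing quotient branch must still be shown to force $\Age(\bC)=\mathcal L$ (rather than permitting, say, a chain-like $\bC$, as for $(\mathbb Q;\min)$); it is here that one must use the full strength of the pseudo-Siggers identity~(\ref{eq:siggers}), not merely the existence of some $\neq$-preserving binary polymorphism. That the two alternatives are mutually exclusive, so that this is a genuine dichotomy, can be addressed afterwards — e.g.\ because $\Csp(\bU,\neq)\in\mathrm P$ precludes a uniformly continuous minor-preserving map $\Pol(\bU,\neq)\to\Proj$ unless $\mathrm P=\mathrm{NP}$, or by an argument along the lines of Proposition~\ref{prop:abelian-group}.
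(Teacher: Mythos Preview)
Your reduction of alternative~(2) to $\bC^{2}\hookrightarrow\bC$ (together with $|C|\ge 2$) is correct, and the easy directions are fine. The gap is in the proposed route to $\bC^{2}\hookrightarrow\bC$. Adjoining a top element $\top$ to obtain a monoid $\bC^{\top}$ does not give you access to Proposition~\ref{prop:monoid-decomp}: a polymorphism of the monoid $(\bC^{\top},\wedge,\top)$ necessarily decomposes as a meet of unary maps by~(\ref{eq:decomp}), whereas a polymorphism $s$ of the bare semilattice $\bC$ need not admit any such decomposition and therefore need not extend to a polymorphism of $\bC^{\top}$. Nor is there an evident uniformly continuous minor-preserving map linking $\Pol(\bC^{\top},\neq)$ to $\Pol(\bC,\neq)$ that would let you transfer the no-map-to-$\Proj$ hypothesis and invoke Theorem~\ref{thm:BP} afresh on $\bC^{\top}$. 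Even granting a workable extension, the analogy with Proposition~\ref{prop:group-involution} is loose: without inverses there is no kernel, and the congruence induced by a non-injective $s_{\{2,4,5,6\}}$ does not obviously yield an embedding of $\bC^{2}$ or otherwise force $\Age(\bC)=\mathcal{L}$. You have correctly identified this as the crux, but the sketch does not contain an idea that closes it.

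The paper's argument is structurally different. It proves by induction on $n$ the strengthened statement that $\bS_{n}$ embeds into \emph{every} $\omega$-categorical semilattice $\bT$ for which $\Pol(\bT,\neq)$ admits no uniformly continuous minor-preserving map to $\Proj$. The strengthening is essential: the inductive step applies the hypothesis not to $\bC$ itself but to intervals $[a,b]\subseteq\bC$, which are primitive positive definable in $(\bC,a,b)$ and hence inherit the hypothesis via Proposition~\ref{prop:coreh1}. The base case $n=2$ is Lemma~\ref{lem:semi-lin}. In the step, knowing that every nondegenerate interval already embeds $\bS_{2}$, one uses~(\ref{eq:siggers}) on a suitable configuration to locate $a<b<c$ with $s(b,a,b,a,a,a)\neq s(c,a,c,a,a,a)$ (or the symmetric variant); then $(x,y)\mapsto s(x,y,x,y,y,y)$ is shown, again via~(\ref{eq:siggers}), to be injective on the \emph{local} product $\{b,c\}\times[a,b]$, which embeds $\bS_{1}\times\bS_{n}\cong\bS_{n+1}$. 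Mutual exclusivity of~(1) and~(2) is established unconditionally via explicit height-one identities holding in $\Pol(\bU,\neq)$ (Proposition~\ref{prop:jakub}), not by appeal to $\mathrm{P}\neq\mathrm{NP}$.
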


We first prove that the two cases are indeed disjoint.
The identities that appear in the next proposition have been discovered by Jakub Rydval in a different context~\cite{RydvalFP};
note that these identities have \emph{height one} and hence are preserved by all minor-preserving maps~\cite{wonderland}. 

\begin{proposition}\label{prop:jakub}
There are $f,g_1,\dots,g_4 \in \Pol(\bU,\neq)$ such that for all $x,y \in U$ 
 	\[ \begin{array}{c} g_{1}(y , x , x) =   f(x,y,x,x),   \\
  	g_{1}(x , y , x) =  f(x,x,y,x),  \\
  	g_{1}(x , x , y) =   f(x,x,x,y),
  	\end{array}  \ 
  	\begin{array}{c} g_{2}(y , x , x) =   f(y,x,x,x),   \\
  	g_{2}(x , y , x) =  f(x,x,y,x),  \\
  	g_{2}(x , x , y) =   f(x,x,x,y), \end{array}     \] 
  	\[ \begin{array}{c} g_{3}(y , x , x) =   f(y,x,x,x),   \\
  	g_{3}(x , y , x) =  f(x,y,x,x),  \\
  	g_{3}(x , x , y) =   f(x,x,x,y),
  	\end{array}  \ 
  	\begin{array}{c} g_{4}(y , x , x) =   f(y,x,x,x),  \\
  	g_{4}(x , y , x) =  f(x,y,x,x),  \\
  	g_{4}(x , x , y) =   f(x,x,y,x). \end{array}  \]   
\end{proposition}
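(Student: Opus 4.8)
The plan is to recast the statement as a single semilattice-embedding problem and to solve it using the universality of $\bU$. Recall from Example~\ref{expl:u} that $\bU$ is the Fra\"{i}ss\'e-limit of the class $\mathcal L$ of finite semilattices, hence homogeneous and universal for countable semilattices, and that $\bU^k\hookrightarrow\bU$ for all $k$ by Proposition~\ref{prop:convex}. Note first that, putting $y=x$, every displayed identity forces $f(x,x,x,x)=g_i(x,x,x)$, so on the diagonal all five operations coincide; this is the only a priori constraint linking them.

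The reformulation is as follows: it suffices to produce a countable semilattice $\bW$ together with semilattice homomorphisms $\iota\colon\bU^4\to\bW$ and $\iota_1,\dots,\iota_4\colon\bU^3\to\bW$ such that (1)~the twelve families of equations of the proposition hold with $\iota$ in place of $f$ and each $\iota_i$ in place of $g_i$, and (2)~$\iota$, and each $\iota_i$, maps any two tuples that differ in every coordinate to distinct elements of $\bW$. Indeed, every finitely generated subalgebra of such a $\bW$ is finite (a semilattice generated by $n$ elements has at most $2^n$ elements), so $\Age(\bW)\subseteq\mathcal L$, and as $\bW$ is countable it embeds into the Fra\"{i}ss\'e-limit $\bU$ via some $j\colon\bW\hookrightarrow\bU$; then $f:=j\circ\iota$ and $g_i:=j\circ\iota_i$ are homomorphisms, hence polymorphisms of $\bU$, they preserve $\neq$ by~(2), and they satisfy the required identities by~(1), so $f,g_1,\dots,g_4\in\Pol(\bU,\neq)$.

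For the existence of $\bW$ the natural candidate is the semilattice presented by the generating set $\bU^4\sqcup\bigsqcup_{i=1}^{4}\bU^3$ modulo the congruence generated by (a)~the relations forcing the five coordinate maps to be homomorphisms (in each copy, the meet of the generators coming from $\bar a$ and $\bar b$ equals the generator coming from $\bar a\wedge\bar b$) and (b)~the twelve families of identity-relations of~(1), such as $\langle(y,x,x)\rangle_{g_1}\sim\langle(x,y,x,x)\rangle_f$. By construction this $\bW$ is a countable semilattice and $\iota,\iota_1,\dots,\iota_4$ are homomorphisms satisfying~(1); so everything reduces to verifying~(2), i.e.\ that the presenting congruence does not identify, inside $\iota(\bU^4)$ or inside any $\iota_i(\bU^3)$, two tuples that differ in every coordinate.

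This last verification is the technical core, and it is precisely where the \emph{height-one} form of Rydval's identities is used: in each of the twelve defining equations a single basic operation symbol is applied, on both sides, to a tuple in which at least two argument slots carry the same variable, so no identity directly relates a tuple with all coordinates distinct to anything, and combining an identity relation with a homomorphism relation only ever produces equalities between tuples that still share a coordinate (the semilattice operation being a \emph{meet}, meeting with further generators can only increase the set of agreeing coordinates). The main obstacle is to make this bookkeeping precise: to enumerate the finitely many shapes of pairs of tuples that the homomorphism relations, interacting with the identity relations, force to be identified, and to confirm that every such pair shares a coordinate, so that no derivation collapses two everywhere-distinct tuples and~(2) survives. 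Once~(2) is established in this way, $\bW$ has all the required properties and the proof is complete.
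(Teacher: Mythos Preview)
Your setup is essentially the paper's, rephrased: the paper also encodes the sought polymorphisms as a system $\Phi$ of atomic $\{\wedge,\neq\}$-formulas whose variables are the elements of copies of $\bU^4$ and $\bU^3$, with three groups of constraints corresponding exactly to your homomorphism relations, your $\neq$-requirement~(2), and the identity relations. Realising $\Phi$ in $\bU$ is the same as embedding your $\bW$ while preserving the needed inequalities. So the reformulation is not new; the whole content is in verifying~(2), which you explicitly flag as ``the main obstacle'' and leave as bookkeeping.

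Here the paper does something cleaner than the congruence-closure analysis you sketch. Rather than reasoning about which pairs the presenting congruence can collapse, it uses compactness ($\omega$-categoricity) to reduce to finite subsystems, then convexity ($\bU^2\hookrightarrow\bU$, Proposition~\ref{prop:convex}) to reduce to a \emph{single} inequality $x\neq y$, and finally Lemma~\ref{lemma:U} to reduce to solvability over the two-element semilattice $(\{0,1\};\wedge)$. There it writes down an explicit $0/1$ assignment: send the variable $x$ to $1$, propagate the value $1$ along the (at most one) identity-relation touching $x$ into each of the other copies, and send everything else to $0$. This works precisely because of the height-one shape you noticed: each identity relates a tuple in one copy to a tuple in exactly one other copy, so the propagation terminates after one step per copy and never forces both endpoints of the chosen inequality to the same value. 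In effect, each such $\{0,1\}$-homomorphism is a witness that your congruence does \emph{not} identify the two generators in question, so the paper's argument subsumes and makes precise the bookkeeping you were proposing. Your route is not wrong, but the missing idea is this reduction to $\{0,1\}$; without it the congruence-closure verification would be noticeably more laborious to carry out rigorously.
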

\begin{proof}
Let $\Phi$ be the following (infinite)
set of atomic $\{\wedge,\neq\}$-formulas. The variables of these formulas consist
of the elements of semilattices 
$(W_0;\wedge)$, $(W_1;\wedge)$,
\dots, $(W_4;\wedge)$ 
such that there is an isomorphism $\alpha_0$ from $\bU^4$ to $(W_0;\wedge)$ and an isomorphism $\alpha_i$ 
from $\bU^3$ to 
 $(W_i;\wedge)$ for each 
$i \in \{1,\dots,4\}$. 
The formulas in $\Phi$ come from three groups.
\begin{itemize}
\item For $i \in \{0,\dots,4\}$ and $x,y,z \in W_i$ the set $\Phi$ contains
the atomic formula $(x \wedge y) = z$ if and only if $(x \wedge z) = z$ holds in $(W_i;\wedge)$. 
\item For $i \in \{0,\dots,4\}$ and distinct $x_1,x_2 \in W_i$ the set $\Phi$ contains $x_1 \neq x_2$. 
\item Finally, whenever $g_i(x_1,x_2,x_3) = f(y_1,y_2,y_3,y_4)$ is an identity from
the statement then the set $\Phi$ contains
the atomic formula $x=y$
where $x$ is the variable
$\alpha_i(x_1,x_2,x_3) \in W_i$
and $y$ is the variable $\alpha_0(y_1,\dots,y_4) \in W_0$. 
\end{itemize}
Note that any satisfying assignment of $\Phi$ in 
$\bU$ restricted to the elements of $W_i$ defines a polymorphism of $\bU$ because of the atomic formulas 
of the first group; these polymorphisms will be injective because of the atomic formulas of the second group; and
jointly they satisfy the identities given in the statement because of the atomic formulas of the third group. 

Since $\bU$ is $\omega$-categorical, it suffices to show that every finite 
subset $\phi$ of formulas in $\Phi$ is satisfiable. If $\phi$ contains no
atomic formulas of the form $x \neq y$ then $\phi$ is trivially satisfiable by mapping all variables to the same element of $\bU$. 
Since $\bU^2 \hookrightarrow \bU$ 
it suffices by Proposition \ref{prop:convex} to consider the situation that $\phi$ is of the form $x \neq y \wedge \psi$ where $\psi$ does not contain
formulas involving $\neq$. By Lemma \ref{lemma:U} it suffices to show that either
$\psi \wedge x=0 \wedge y=1$ or 
or $\psi \wedge x=1 \wedge y=0$
is satisfiable in $(\{0,1\};\wedge,0,1)$. 
We construct a solution to 
$\psi \wedge x=1 \wedge y=0$. 
Suppose that 
$x$, which must be assigned to $1$, is of the form
$\alpha_1(y,x,x)$. 
If the variable $x_0 := \alpha_0(x,y,x,x)$ is 
present in $\phi$, assign it to $1$, too. 
If the variable $x_3 := \alpha_3(x,y,x)$ is present in $\phi$, assign it to $1$, too. 
If the variable $x_4 := \alpha_4(x,x,y)$ is present in 
$\phi$, assign it to $1$, too. 
Note that assigning all other variables to $0$ is a satisfying solution to $\phi$. 
\end{proof}

Before we prove Theorem~\ref{thm:semilattice} we consider a special case that will be used in the proof. Every linear order $(A;\leq)$ 
gives rise to a semilattice $(A;\wedge)$, by defining $a \wedge b := a$ iff $a \leq b$. Semilattices that arise from linear orders in this way are called \emph{linear}. 
A semilattice $(S;\wedge)$ is called \emph{semilinear} if for every $x \in S$ the subsemilattice induced by $\{y \in S \mid y \leq x\}$ is linear. 


\begin{lemma}\label{lem:semi-lin}
Let $(S;\wedge)$ be a countable  
$\omega$-categorical semilinear 
semilattice with $|S| > 1$.
Then 
$\Pol(S;\wedge,\neq)$
has a uniformly continuous minor-preserving map to $\Proj$.
\end{lemma}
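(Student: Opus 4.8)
The plan is to prove that $(S;\wedge,\neq)$ has no pseudo-Siggers polymorphism and then invoke the Barto--Pinsker theorem. Since homomorphisms between structures of the form $(\mathfrak A,\neq)$ are embeddings, $(S;\wedge,\neq)$ is an $\omega$-categorical core; let $\bC$ be its model-complete core (Lemma~\ref{lem:mc}), so that there is a uniformly continuous minor-preserving map $\Pol(S;\wedge,\neq)\to\Pol(\bC)$ by Proposition~\ref{prop:coreh1} (and Remark~\ref{rem:gen-sig}). By Theorem~\ref{thm:BP}, either $\Pol(\bC)$ has a uniformly continuous minor-preserving map to $\Proj$ --- and composing the two maps then proves the lemma --- or $\bC$ has a pseudo-Siggers polymorphism, in which case $(S;\wedge,\neq)$ has one as well, by the lift lemma (the fact used for $(3)\Rightarrow(1)$ in Theorem~\ref{thm:group-pseudo-sig}). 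Hence it suffices to rule out the second alternative.

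So suppose for contradiction that $s\in\Pol(S;\wedge,\neq)$ is $6$-ary and that $e_1,e_2\in\End(S;\wedge,\neq)$ witness $e_1(s(x,y,x,z,y,z))=e_2(s(y,x,z,x,z,y))$; recall that $e_1,e_2$ are in particular injective self-embeddings of $(S;\wedge)$, hence order-reflecting. As $|S|>1$ we may fix $0<1$ in $S$ (two distinct comparable elements, or $a\wedge b<a$ for incomparable $a,b$). Being a homomorphism of $(S;\wedge)$, $s$ is monotone, so it maps the Boolean cube $\{0,1\}^6\subseteq S^6$ into the principal ideal below $s(1,\dots,1)$, which is a chain by semilinearity; since $\wedge$ restricts to the minimum on a chain, $s':=s|_{\{0,1\}^6}$ is a meet-homomorphism from $\{0,1\}^6$ onto a chain $C$, and it still preserves $\neq$. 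The Siggers minor maps $L(x,y,z):=(x,y,x,z,y,z)$ and $R(x,y,z):=(y,x,z,x,z,y)$ from $\{0,1\}^3$ to $\{0,1\}^6$ are themselves meet-homomorphisms and commute with coordinatewise complementation, so $\Lambda:=s'\circ L$ and $P:=s'\circ R$ are meet-homomorphisms $\{0,1\}^3\to C$ that preserve $\neq$. From $e_1\circ\Lambda=e_2\circ P$ and the fact that $e_1,e_2$ are injective and order-reflecting it follows that $\Lambda(\bar\epsilon)\mapsto P(\bar\epsilon)$ is an order-isomorphism between the images of $\Lambda$ and $P$; in particular $\Lambda$ and $P$ induce the \emph{same} preorder on $\{0,1\}^3$, and so attain their minimum on the same set of triples.

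To finish I would prove the structural fact that a $\neq$-preserving meet-homomorphism $h\colon\{0,1\}^n\to C$ into a chain is non-constant and attains its minimum exactly on $\{\bar x:x_i=0\}$ for a single coordinate $i$: the set on which $h$ exceeds its minimum is a nonempty proper filter of the Boolean cube, hence of the form ${\uparrow}\bar a$ for some nonzero $\bar a$, and $\neq$-preservation forces $\bar a$ to be a unit vector, since otherwise some tuple $\bar x$ satisfies $\bar x\not\geq\bar a$ and $\neg\bar x\not\geq\bar a$, making $h$ minimal on the complementary pair $\bar x,\neg\bar x$. Applying this to $s'$ yields a position $p\in\{1,\dots,6\}$ such that $s'$ is minimal precisely when its $p$-th coordinate is $0$; consequently $\Lambda$ is minimal precisely when the variable occupying position $p$ of $L$ is $0$, and $P$ is minimal precisely when the variable occupying position $p$ of $R$ is $0$. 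But $L(x,y,z)$ and $R(x,y,z)$ disagree in every coordinate --- reading positions $1$ through $6$ they are $(x,y),(y,x),(x,z),(z,x),(y,z),(z,y)$ --- so these two variables are always distinct, and hence $\Lambda$ and $P$ cannot have the same minimum set. This contradiction shows $(S;\wedge,\neq)$ has no pseudo-Siggers polymorphism, which completes the proof. The only step needing real care is the structural description of $\neq$-preserving meet-homomorphisms into chains together with the bookkeeping that transports it along the minors $L$ and $R$; everything else is the by-now standard passage to the model-complete core and the appeal to Barto--Pinsker.
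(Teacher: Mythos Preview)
Your proof is correct and takes a genuinely different route from the paper's.

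The paper argues by case distinction. For $|S|=2$ it invokes Schaefer; for finite $|S|>2$ it uses essential unarity of $\Pol(S;\neq)$; for infinite linear $(S;\wedge)$ it identifies the model companion with $({\mathbb Q};\min)$, primitive-positively defines the betweenness-type relation $R=\{(x,y,z)\mid x=y<z\ \vee\ x=z<y\}$, and then cites classification results for temporal CSPs to get a uniformly continuous clone homomorphism $\Pol({\mathbb Q};R)\to\Proj$; finally the general semilinear case is reduced to the linear one by passing to a primitive-positively definable principal ideal $\{y:y\leq x\}$ in the model companion.

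Your argument is uniform and self-contained: you restrict a putative pseudo-Siggers polymorphism $s$ to a two-element chain $\{0,1\}\subseteq S$, use semilinearity once to see that the image of $\{0,1\}^6$ lies in a chain, and then prove the clean combinatorial lemma that any $\neq$-preserving meet-homomorphism from a Boolean cube into a chain depends essentially on a single coordinate (its above-minimum set is a principal filter generated by a unit vector). Since the Siggers minors $L$ and $R$ never place the same variable in the same position, this forces the two ternary maps $s\circ L$ and $s\circ R$ to have different minimum-sets, contradicting the order-isomorphism you extract from the pseudo-Siggers witnesses $e_1,e_2$. This avoids the appeal to the classification of reducts of $({\mathbb Q};<)$ and fits the paper's stated theme of exploiting the pseudo-Siggers identity directly; the paper's approach, on the other hand, yields the slightly stronger conclusion (a uniformly continuous clone \emph{homomorphism}, not merely a minor-preserving map) in the linear case and makes the model companion explicit. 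One small point worth tightening: make sure your invocation of Theorem~\ref{thm:BP} covers the finite case (either note that finite cores fall under the same dichotomy, or observe that your combinatorial argument already excludes a Siggers polymorphism there).
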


\begin{proof}
If $|S|=2$ then the
statement follows
from the proof of Schaefers classification:  $(S;\wedge,\neq)$
is neither preserved by $\min$, $\max$, or constant operations because these operations do not preserve $\neq$, and neither preserved by $\minority$ and 
$\majority$ because these operations
do not preserve $\wedge$. 
If $|S| > 2$ is finite
then the statement holds
because of the well-known fact that 
in this case all polymorphisms
of $(S;\neq)$ only depend on one argument.  
Otherwise, $(S;\wedge)$ is countably infinite, and so is its model companion $(C;\wedge)$. 
First consider the case that 
$(C;\wedge)$ is linear. 
Every $\omega$-categorical infinite linear order embeds all finite linear orders, and thus $(C;\wedge)$ and $({\mathbb Q};\min)$ have the same age. Since both structures are model-complete, they have the same first-order theory, and 
by the $\omega$-categoricity of 
$({\mathbb Q};\min)$ they are isomorphic. 
 Let $$R := \{(x,y,z) \in {\mathbb Q}^3 \mid x=y < z \vee x = z <  y\}.$$
The primitive positive formula $\psi(x,y,z)$ given by
$$\min(y,z) = x \wedge y \neq z$$
defines $R$ in $({\mathbb Q};\min,\neq)$.
It follows from Theorem 51 in~\cite{BP-reductsRamsey} in combination with Theorem~28  in~\cite{Topo-Birk} that 
$\Pol({\mathbb Q};R)$ has a (uniformly) continuous homomorphism to $\Proj$ 
and hence the same holds for $\Pol({\mathbb Q};\min,\neq)$. 
It follows in particular that there exists a uniformly
continuous minor-preserving map from $\Pol({\mathbb Q};\min,\neq)$ to $\Proj$ (see~\cite{wonderland}). 

To prove the general case, 
choose an element $x \in C$ such that $T := \{y \in C \mid y \leq x\}$ contains more than one element; such an element $x$ must exist because $C$
has more than one element. 
Then the subsemilattice of $(C;\wedge)$ with domain $T$ is linear,
and we have proved above 
that $\Pol(T;\wedge,\neq)$ has a 
uniformly continuous minor-preserving map to $\Proj$. Since $T$ is primitive positive definable in an expansion of the model-complete core $(C;\wedge,\neq)$, it follows that
$\Pol(C;\wedge,\neq)$ has a uniformly continuous minor-preserving map to $\Proj$, too (Proposition~\ref{prop:coreh1}). 
\end{proof} 

\begin{proof}[Proof of Theorem~\ref{thm:semilattice}]
If there is a uniformly continuous minor-preserving map from $\Pol(\bS,\neq)$ to $\Proj$, then the NP-hardness of
$\Csp(\bS,\neq)$ follows from Theorem~\ref{thm:uch1}. 
In this case, the model-companion of $\bS$
cannot be isomorphic to $\bU$:
otherwise, there would be 
a minor-preserving map from $\bU$
via $\Pol(\bS,\neq)$ to $\Proj$,
and such a map preserves the identities from Proposition~\ref{prop:jakub} (see~\cite{wonderland}). Clearly,
these identities cannot be satisfied by projections on a two-element set,
so we reached a contradiction. 

Now suppose that there
is no uniformly continuous minor-preserving map from $\Pol(\bS,\neq)$ to $\Proj$. 
We will prove that for every $n \in \omega$, 
$\bS_n$ embeds into every infinite 
$\omega$-categorical 
semilattice $\bT$ 
such that $\Pol(\bT,\neq)$ has no uniformly continuous minor-preserving map to $\Proj$.  
The proof is by induction on $n \in \omega$. The statement is clear for $n=1$, 
and for $n=2$ the statement follows from
Lemma~\ref{lem:semi-lin}. 
Now suppose inductively that $n \geq 2$ 
is such that 
$\bS_n$ embeds into every
$\omega$-categorical 
semilattice $\bT$ 
such that $\Pol(\bT,\neq)$ has no uniformly continuous minor-preserving map to $\Proj$. 
We would like to show that
$\bS_{n+1}$ embeds into $\bT$, too.  We may assume that 
$\bT$ is model-complete;
otherwise, 
let $\bC$ be the model-companion of $\bT$, 
which exists and is again an $\omega$-categorical 
countable semilattice (Lemma~\ref{lem:mc}). 
Moreover, there is a uniformly continuous minor-preserving map
from $\Pol(\bT,\neq)$ to
$\Pol(\bC,\neq)$, and
hence there is no uniformly continuous minor-preserving map
from $\Pol(\bC,\neq)$ to
$\Proj$. We may therefore replace 
$\bT$ by $\bC$. 

\medskip 
{\bf Claim 1.} For all $a,b \in T$
with $a<b$ the subsemilattice
of $\bT$ induced by the interval 
$[a,b] := \{x \in T \mid a \leq x \leq b\}$ embeds $\bS_n$. Clearly, the structure $(\bT,\neq,a,b)$ is a model-complete core, too. 
Proposition~\ref{prop:coreh1} thus implies
that $(\bT,\neq,a,b)$ has no
uniformly continuous minor-preserving map to 
$\Proj$. 
The set $[a,b]$ is primitive positive definable in 
$(\bT,a,b)$, and $([a,b];\wedge)$ is an $\omega$-categorical subsemilattice of $\bT$. 
So there is a uniformly continuous minor-preserving map from $\Pol(\bT,\neq,a,b)$ to 
$\Pol([a,b];\wedge,\neq)$ and hence 
 $\Pol([a,b];\wedge,\neq)$ cannot have  
a uniformly continuous minor-preserving map to $\Proj$. 
The inductive assumption then implies that there exists an embedding $e \colon \bS_n \hookrightarrow ([a,b];\wedge)$.

\medskip 
{\bf Claim 2.} There exist $a,b,c \in T$ 
such that $a<b<c$ and
\begin{align}
f(b,a,b,a,a,a) & \neq f(c,a,c,a,a,a) \label{eq:1}  \\
\text{ or } f(a,b,a,a,b,a) & \neq f(a,c,a,a,c,a). \label{eq:2} 
\end{align}
Suppose otherwise. 
By Claim 1, there exists an embedding 
$e \colon \bS_2 \to \bT$. \\
Let $y,z \in T$ be
the images of the atoms of $\bS_2$
under $e$, and let $w := e({\bf 1})$ and $x := e({\bf 0})$. \\
Again by Claim 1, 
there exists an embedding $e'$  
of $\bS_2$ into $[y,w]$. \\
Let $u$ and $v$ be the images of the atoms of $\bS_2$ under $e'$; \\
we may then replace $y$ by $uv$ and then 
$x$ by $yz$; \\
in the resulting constellation
we have $uv=y$ and $yz=x$; \\
see the figure to the right.

\vspace{-2.6cm}
\begin{flushright}
\includegraphics[scale=.4]{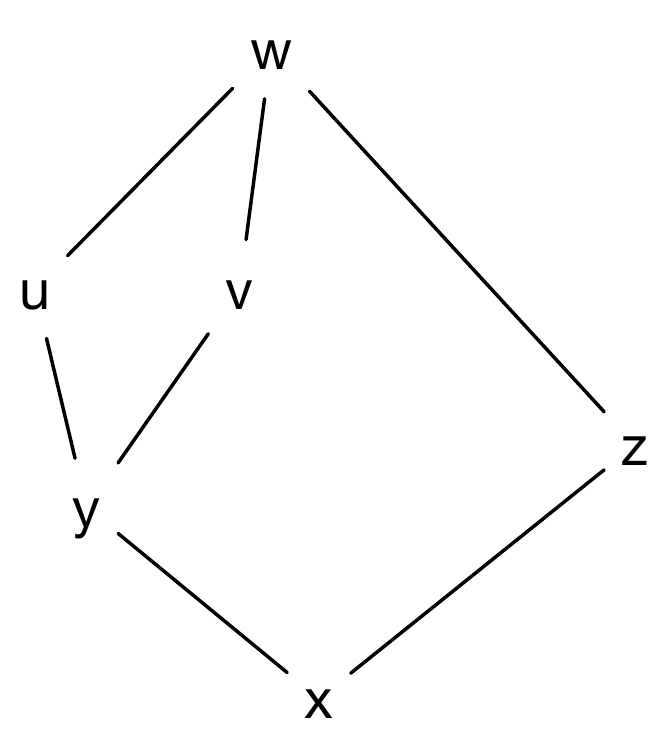}
\end{flushright}

Since we assumed that (\ref{eq:1}) neither holds for $(a,b,c) = (x,y,w)$ nor for $(a,b,c) = (x,z,w)$, 
we have 
$$f(y,x,y,x,x,x) = f(w,x,w,x,x,x) = f(z,x,z,x,x,x)$$
and so 
$$f(x,x,x,x,x,x) = f(y,x,y,x,x,x) f(z,x,z,x,x,x) = f(y,x,y,x,x,x).$$
A similar argument applied to the image of $e'$ 
and (\ref{eq:2}) shows that $f(y,y,y,y,y,y) = f(y,w,y,y,w,y)$. Hence, 
\begin{align*}
f(y,z,y,x,z,x) & = f(y,w,y,y,w,y) f(y,z,y,z,z,z) \\
& = f(y,y,y,y,y,y) f(y,z,y,z,z,z) \\
& = f(y,x,y,x,x,x) = f(x,x,x,x,x,x)
\end{align*}
and so by (\ref{eq:siggers}) 
$$f(y,z,y,x,z,x) = f(x,x,x,x,x,x) = f(z,y,x,y,x,z)$$
contradicting that $f$ preserves $\neq$. 

Suppose that $f(b,a,b,a,a,a) \neq f(c,a,c,a,a,a)$; the other case from Claim 2 can be shown similarly. Let $g \colon \{b,c\} \times [a,b] \to T$ be given by 
$$g(x,y) := f(x,y,x,y,y,y).$$ 
We show that $g$ is injective. Suppose first that $g(b,y) = g(b,y')$ for $y,y' \in [a,b]$.
Then by (\ref{eq:siggers})  we have 
$f(y,b,y,b,y,y) = f(y',b,y',b,y',y')$. 
Since $yb=y$ and $y'b=y'$ we have
$$f(y,y,y,y,y,y) = g(b,y)f(y,b,y,b,y,y) = g(b,y') f(y',b,y',b,y',y') = f(y',y',y',y',y',y')$$ 
and so $y=y'$ as $f$ preserves $\neq$. A similar argument shows that
$g(c,y) = g(c,y')$ forces $y=y'$. 
Finally, if $g(b,x) = g(c,x)$ then multiplying both sides by
$f(c,a,c,a,a,a)$ we obtain 
$f(b,a,b,a,a,a) = f(c,a,c,a,a,a)$, contradicting our hypothesis. 
This shows that $\bS_1 \times \bS_n$ embeds into $\bT$. 
Since $\bS_1 \times \bS_{n}  \simeq \bS_{n+1}$ we have that  
$\bS_{n+1}$ embeds into $\bT$, which concludes the induction. 

We obtain that $\bS$
and $\bC$ have the same age as $\bU$. Since $\bC$ and
$\bU$ are model-complete, they must be isomorphic. The polynomial-time tractability of $\Csp(\bS,\neq)$
then follows from Proposition~\ref{prop:alg}. 
\end{proof}

\begin{corollary}
Let $\bS$ be a countable $\omega$-categorical semilattice. Then $\Csp(\bS,\neq)$ is in P or NP-hard. 
\end{corollary}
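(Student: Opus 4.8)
The plan is to read this off directly from Theorem~\ref{thm:semilattice}. That theorem asserts that for a countable $\omega$-categorical semilattice $\bS$, at least one of two alternatives holds: either (1) there is a uniformly continuous minor-preserving map from $\Pol(\bS,\neq)$ to the clone of projections, or (2) the model companion $\bC$ of $\bS$ is isomorphic to $\bU$. Since these two alternatives are exhaustive (this is exactly how Theorem~\ref{thm:semilattice} is phrased), it suffices to check that each of them forces one of the two stated complexity conclusions.

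First I would treat alternative (1). Here Theorem~\ref{thm:uch1}, applied to the structure $(\bS,\neq)$ and invoking Remark~\ref{rem:gen-sig} to lift the relationality hypothesis to structures with operations, immediately yields that $\Csp(\bS,\neq)$ is NP-hard. (In fact Theorem~\ref{thm:semilattice} already records this in its statement.) Next I would treat alternative (2): if $\bC \cong \bU$, then by Lemma~\ref{lem:c} the problems $\Csp(\bS,\neq)$ and $\Csp(\bC,\neq)$ are the \emph{same} computational problem, so $\Csp(\bS,\neq) = \Csp(\bU,\neq)$, which is solvable in polynomial time by Proposition~\ref{prop:alg}. Hence $\Csp(\bS,\neq)$ is in P.

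Combining the two cases gives the claim. There is no genuine obstacle here: all of the substantive work --- the structural dichotomy, the hardness criterion, and the polynomial-time algorithm for $\Csp(\bU,\neq)$ --- has already been done in Theorem~\ref{thm:semilattice}, Theorem~\ref{thm:uch1}, and Proposition~\ref{prop:alg}, respectively. The corollary merely packages the exhaustiveness of the two cases of Theorem~\ref{thm:semilattice} into the coarser $\{$P, NP-hard$\}$ dichotomy.
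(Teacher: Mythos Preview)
Your proposal is correct and matches the paper's approach: the corollary is stated without proof in the paper because the complexity conclusions (NP-hard in case~(1), in P in case~(2)) are already part of the statement of Theorem~\ref{thm:semilattice}, so the dichotomy follows immediately from the exhaustiveness of its two cases. The additional justifications you give via Theorem~\ref{thm:uch1}, Lemma~\ref{lem:c}, and Proposition~\ref{prop:alg} simply unpack what Theorem~\ref{thm:semilattice} already records.
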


\section{Conclusion and Future Work}
\label{sect:discussion}
In previous work about 
CSPs for $\omega$-categorical structures 
$\bA$, algebras were used to analyse the polymorphism clones of $\bA$ (and these algebras are oligomorphic, but never $\omega$-categorical). 
In this article, in contrast, the structure $\bA$ 
itself is assumed to be an algebra, expanded by the disequality relation. 
Our result underlines the importance of 
\begin{itemize}
\item uniformly continuous minor-preserving maps 
$\Pol(\bA,\neq) \to \Proj$ as a tool for proving hardness (in Section~\ref{sect:semilattices}),
and 
\item pseudo-Siggers polymorphisms of model-complete cores to obtain structural (and subsequently algorithmic) results. 
\end{itemize} 
We are not aware of any previous result that would 
use the Siggers (or pseudo-Siggers) identity directly, even for algebras over finite domains. 
For example,
the result that
every structure with a finite domain that has a Siggers polymorphism also has a cyclic polymorphism
 departs from the (a priori) weaker assumption that the structure has a Taylor polymorphism~\cite{Cyclic}. We close with some open problems. 
\begin{itemize}
\item Let $\bA$ be an $\omega$-categorical model-complete algebra. Suppose that $\bA^2 \hookrightarrow \bA$. Does then $(\bA,\neq)$ have a binary pseudo-symmetric polymorphism? The converse is false (a counterexample
can be found in the class of $\omega$-categorical model-complete algebras with a single unary function symbol). 
 The forward implication is true if $\bA^2$ is isomorphic to $\bA$~\cite{maximal}. 
\item Let $\bL$ be an $\omega$-categorical lattice. Does $\Csp(\bL,\neq)$ satisfy
a complexity dichotomy P versus NP-hard? We may assume that 
$\bL$ is model-complete. 
Using similar techniques as in Section~\ref{sect:semilattices} it is possible to show that if 
$\Pol(\bL,\neq)$ does not have uniformly continuous minor-preserving maps to $\Proj$, then $\bL$ must have a square embedding $\bL^2 \hookrightarrow \bL$. So we may assume that 
the age of $\bL$ is closed under taking products 
and thus contains the class of all distributive lattices. 
The countable homogeneous universal distributive lattice has
a uniformly continuous continuous minor-preserving maps to $\Proj$ (this has essentially been observed in~\cite{BodHils}). 
Are there any other examples of model-complete $\omega$-categorical lattices 
with $\bL^2 \hookrightarrow \bL$? 
Note that the class of all finite lattices forms
an amalgamation class, but the countable homogeneous lattice $\bL$ with this age is not $\omega$-categorical: this can be seen from the fact that every finite lattice can be embedded
in a finite lattice with three generators~\cite{Dean} (page 224). Hence, there exists an infinite
number of inequivalent formulas with three variables over $\bL$. 
\end{itemize}

\bibliographystyle{abbrv}
\bibliography{local}

\end{document}